\newtheorem{theorem}{Theorem}[section]
\newtheorem{thm}[theorem]{Theorem}
\newtheorem{lem}[theorem]{Lemma}
\newtheorem{definition}[theorem]{Definition}
\newtheorem{defi}[theorem]{Definition}
\newtheorem{cor}[theorem]{Corollary}
\newtheorem{corollary}[theorem]{Corollary}
\newtheorem{prop}[theorem]{Proposition}
\newcommand{\norm}[1]{\left\lVert#1\right\rVert}
\numberwithin{equation}{section}
\newcommand{\cA}{{\mathcal A}}
\newcommand{\cE}{{\mathcal E}}
\newcommand{\cH}{{\mathcal H}}
\newcommand{\cM}{{\mathcal M}}
\newcommand{\cP}{{\mathcal P}}
\newcommand{\cZ}{{\mathcal Z}}
\begin{document}

\title[Lack of isomorphic embeddings of $\ell_{p,q}$  into $L_{p,q}(\mathcal{M},\tau)$]{Lack of isomorphic embeddings of $\ell_{p,q}$  into $L_{p,q}(\mathcal{M},\tau)$
%Lack of isomorphic embeddings of sequence space $\ell_{p,q}$  into $L_{p,q}(\mathcal{M},\tau)$ 
over a noncommutative probability space
}

\author[J. Huang]{Jinghao Huang}
\address{Institute for  Advanced Study in  Mathematics of HIT, Harbin Institute of Technology, Harbin, 150001, China}
\email{{\color{blue}jinghao.huang@hit.edu.cn}}

\author[O. Sadovskaya]{Olga Sadovskaya}
\address{Institute of Mathematics, Tashkent, 100084, Uzbekistan.}
\email{\color{blue}sadovskaya-o@inbox.ru}

\author[F. Sukochev]{Fedor Sukochev}
\address{School of Mathematics and Statistics, University of NSW, Sydney,  2052, Australia}
\email{\color{blue}f.sukochev@unsw.edu.au}

\author[D. Zanin]{Dmitriy Zanin}
\address{School of Mathematics and Statistics, University of NSW, Sydney,  2052, Australia}
\email{\color{blue}d.zanin@unsw.edu.au}

\thanks{J. Huang was supported the NNSF of China (No.12031004 and 12301160). F. Sukochev and D. Zanin were supported by the ARC}
\keywords{isomorphic embedding; noncommutative $L_{p,q}$-space; $\ell_{p,q}$ sequence space.}

\subjclass[2010]{46E30, 47B10, 46L52, 46B03.  }

\begin{abstract} 
 We prove that the sequence space $\ell_{p,q}$ does not embed into $L_{p,q}(\cM,\tau)$ for any noncommutative probability space $(\cM,\tau)$, $1< p<\infty $, $1\le q<\infty$, $p\ne q$. 
Several applications to the isomorphic classification of noncommutative $L_{p,q}$-spaces are given, which extend and complement  earlier results in   \cite{KS,SS,HS21,HRS,HRS00}.
\end{abstract}

\maketitle

\section{Introduction}

The Lorentz spaces $L_{p,q}$ are important generalizations of $L_p$-spaces which 
 were introduced by G.G. Lorentz in \cite{Lorentz50,Lorentz51}. 
 Their importance has been demonstrated in several areas of analysis such as harmonic analysis, interpolation theory, etc. (see e.g. \cite{BS,Dilworth,CD85} and references therein).
Recall that, 
for 
  a measure space  $(\Omega,\Sigma, \mu)$ and  $1<p <\infty$ and $1\le q<\infty$, the Lorentz space $L_{p,q}(\Omega)$ is the space  of all measurable functions $f$ on $\Omega$ such that
$$ \norm{f}_{p,q}:= \left(  \int_{0}^\infty \mu(t;f ) ^q d t^{q/p} \right)^{1/q}<\infty ,$$
where $\mu(f )$ stands for  the decreasing rearrangement of $|f|$ (see the next section).

The study of subspaces of $L_{p,q}$-spaces has gained  attention during past decades.
Recall that a Banach space $X$ is said to be primary if whenever $X$ isomorphic to $Y\oplus Z$, then either $Y$ or $Z$ is isomorphic to $X$. It is known that $L_{p,q}(0,1)$ and $L_{p,q}(0,\infty)$ are primary spaces  (see \cite[Theorem  2.d.11.]{LT2} and \cite[Theorem A.1]{Dilworth90}).
There are several criteria for  a sequence in $L_{p,q}$ to be equivalent to the $\ell_q$-basis (see e.g. \cite[Lemma 2.1]{CD} and \cite[Proposition 4.e.3]{LT1}). 
This together with a  criterion for a symmetric sequence    generating a complemented subspace in a Banach lattice 
in \cite[Lemma 8.10]{JMST} shows  that  $L_{p,q}(0,1)$ and  $L_{p,q}(0,\infty)$ are not isomorphic to each other,  $1< p<\infty$, $1\le q<\infty$, $p\ne  q $, and $\ell_{p,q}$ 
is not isomorphic to a complemented subspace of $L_{p,q}(0,1)$
 \cite[Corollary 2.2]{CD} (see also \cite{Dilworth}).
A stronger result showing that $\ell_{p,q}$ does not embed isomorphically into $L_{p,q}(0,1)$ has been proven in \cite{KS} and \cite{SS} by the so-called ``subsequence splitting lemma'' introduced and studied in \cite{S96} (see also \cite{DDS07,DSS,ASS,HSS,SS2004} for various applications of this techniques ).
It is interesting to note that there are symmetric function  spaces on $(0,1)$ which contain isomorphic copies of the space $\ell_{p,q}$, $1<p<2$, $1\le q<\infty$ \cite{Novikova}.
It is also known \cite{KS,SS} that
$L_{p,q}(0,\infty)$ does not embed isomorphically into the space $L_{p,q}(0,1)\oplus \ell_{p,q}$.
Hence,
 if $1<p<\infty$, $1\le q<\infty $,  $p\neq q$,  then
$$\ell_{p,q}^n, ~n=1,2,\cdots, ~\ell_{p,q},\; L_{p,q}(0,1),\; L_{p,q}(0,1)\oplus \ell_{p,q}\;\;\mbox{and}\;\; L_{p,q}(0,\infty)$$
is the full  list of pairwise non-isomorphic $L_{p,q}$-spaces over a resonant measure space. 
For embedding of $\ell_r$ into $L_{p,q}$, we refer to \cite{AS}, \cite{CD89}, \cite{CF},  \cite[Theorems 7 and 10]{Dilworth},   \cite[Example 3.5]{JSZ} and \cite{RS}.
For the isomorphic classification of $L_p$-spaces (i.e., $\ell_p^n$,  $n=1,2,\cdots,$ $\ell_p$, $L_p(0,1)$), we refer to  \cite[Part III]{Wojtaszczyk} and
\cite[Chapter XII]{Banach}; 
 for the isomorphic classification of  weak $L_p$-spaces, we refer to
  \cite{Leung,Leung2,Leung3,LS}.
  The situations with classification of $L_p$-spaces and that of  weak $L_p$-spaces are completely different from that of $L_{p,q}$. 

The situation with isomorphic classification of $L_{p,q}$-spaces over arbitrary measure spaces (not necessarily resonant) is more involved.
If the atoms $A_n$'s in a $\sigma$-finite atomic measure space $\Omega$ satisfy the conditions  that $$\mu(A_n)\to 0 \mbox{ and }  \sum_{n \ge 0} \mu(A_n)=\infty  ,$$ then the $L_{p,q}$-space on such a measure space is 
denoted by $U_{p,q}$. 
  Johnson et.al \cite[p.31]{JMST} introduced  these spaces, as
  natural generalizations  of   Rosenthal's space $X_p$  \cite{Rosenthal}.
It is proved in \cite[Theorem 8.7]{JMST} (see also \cite[Proposition 2.f.7]{LT2}) that
   $U_{p,q}$  does not depend on the particular sequence $\{A_n\}$ used in the definition (up to an isomorphism).

 The following tree (Hasse
diagram in the terminology from \cite{HRS}) was obtained in \cite{HS21}. 
  For any spaces $X\ne Y$   listed in the tree below, $X$ is isomorphic to a subspace (indeed,   a complemented subspace) of $Y$ if and only  if $X$ can be joined to $Y$ through a descending branch.
Note that there exists atomic measure spaces $\Omega$ such that $L_{p,q}(\Omega)$ is isomorphic to 
$(\oplus_{n=1}^\infty \ell_{p,q}^n )_q$\cite[Theorem 4.19]{HS21}. 

\begin{center}
\begin{tikzpicture}
  \node (a) at (0,2) {$(\oplus_{n=1}^\infty \ell_{p,q}^n )_q$};
  \node (b) at (-2,0) {$L_{p,q}(0,1)$};
  \node (c) at (1,1) {$\ell_{p,q}$};
  \node (d) at (2,0) {$U_{p,q}$};
  \node (e) at (-1,-1) {$L_{p,q}(0,1)\oplus \ell_{p,q}$};
  \node (f) at (0,-2) {$L_{p,q}(0,1)\oplus U_{p,q}$};
  \node (min) at (0,-3) {$L_{p,q}(0,\infty )$};
  \draw  (a) -- (c)
    (a) -- (b)
     (e) -- (c)--(d)
     (b) --(e) --(f) -- (min)
    (d)-- (f)
  ;
\end{tikzpicture}
\end{center}
Embeddings of noncommutative $L_{p,q}$-spaces were studied in \cite{AHS}, which shows that function space $L_{p,q}(0,1)$ does not embed into operator ideal $C_{p,q}$ for some values of $(p,q)$,  thus extending the deep pioneering results of Arazy and Lindenstrauss \cite{AL} for $L_p$-spaces.
Also, by a result due to Gillespie \cite[Theorem 2.6]{Gillespie}, 
$C_{p,q}$ does not embed into $L_{p,q}(0,\infty )$,
complementing and extending pioneering results due to McCarthy for $C_p$-spaces\cite{Mc}.

 The isomorphic classification of noncommutative $L_p$-spaces was studied in \cite{S01,S96,HRS00,HRS,S00,SC}. 
The 
 main result of the present paper    is the following theorem, which extends the main result of \cite{KS,SS,HRS,HRS00} to the setting of $L_{p,q}(\mathcal{M},\tau)$ over a noncommutative probability space. 
The proof relies on noncommutative Khinchine-type inequalities developed in \cite{S01,DPPS,PS}. 
 Note that there exists $(p,q)$, $p\ne q$, such that  $\ell_{p,q}$ is isomorphic to a subspace of the predual of some noncommutative probability space \cite[Example 5]{HJSZ}.
It is shown in \cite{Rand} that 
$\ell_p$ never embeds in $L_{p,q}(\cM,\tau)$ for any semifinite von Neumann algebra $\cM$ if $p\ne q$ and  $p\ne2$.

\begin{thm}\label{main theorem} 
Let $ (p,q)\in (1,\infty)\times [1,\infty )$. 
If $p\neq q,$ then $\ell_{p,q}$ does not embed isomorphically into $L_{p,q}(\mathcal{M},\tau)$ for an arbitrary noncommutative probability space $(\mathcal{M},\tau).$
\end{thm}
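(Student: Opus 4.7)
The plan is to proceed by contradiction. Suppose $T \colon \ell_{p,q} \to L_{p,q}(\cM,\tau)$ is an isomorphic embedding and set $x_n := T e_n$, where $(e_n)$ denotes the canonical basis of $\ell_{p,q}$. After normalization we may assume that $(x_n)$ is equivalent to the $\ell_{p,q}$-basis with $\|x_n\|_{p,q}=1$; since that basis is $1$-unconditional and symmetric, we are free to pass to subsequences and to Rademacher averages along the way without loss of generality.

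The first step is to apply a noncommutative subsequence splitting in the Lorentz setting, generalizing the classical lemma of \cite{S96} used in \cite{KS,SS} and its noncommutative $L_p$-analogues from \cite{S01,HRS00,HRS}. After extracting a subsequence, I would decompose $x_n = y_n + z_n$ where the $y_n$'s have pairwise orthogonal two-sided supports (so that they are ``disjoint'' in the noncommutative sense: there exist pairwise orthogonal projections $e_n, f_n \in \cM$ with $y_n = e_n y_n f_n$), while the tail $(z_n)$ is $\tau$-equi-integrable in $L_{p,q}(\cM,\tau)$. The role of the noncommutative Khinchine-type inequalities from \cite{S01,DPPS,PS} here is to decouple the disjoint and equi-integrable pieces and to bound cross-terms in norm estimates applied to arbitrary $\ell_{p,q}$-combinations of the $(x_n)$'s.

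The second step exploits the resulting dichotomy. A normalized sequence with pairwise orthogonal two-sided supports in $L_{p,q}(\cM,\tau)$ is equivalent to the canonical $\ell_q$-basis, by a direct singular-value computation. On the other hand, any bounded, $\tau$-equi-integrable sequence in the probability space $(\cM,\tau)$ lies, after further extraction, in $L_s(\cM,\tau)$ for some $s>p$, and by the noncommutative Khinchine inequalities generates a subspace of $\ell_s$- or $\ell_2$-type, which by \cite{Rand} cannot contain an isomorphic copy of $\ell_{p,q}$ when $p\ne q$. Combining these, either $\liminf_n \|y_n\|_{p,q}>0$ and a subsequence of $(x_n)$ is equivalent to the $\ell_q$-basis, or $\|y_n\|_{p,q}\to 0$ and $(x_n)$ is essentially equi-integrable and so exhibits $\ell_p$- or $\ell_2$-type behaviour. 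Both alternatives contradict equivalence with the $\ell_{p,q}$-basis when $p\ne q$.

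The main obstacle I anticipate is the first step: establishing a subsequence splitting lemma in the noncommutative Lorentz setting with sufficiently sharp control to drive the dichotomy. Unlike the commutative case, where disjointness of supports is straightforward, in $L_{p,q}(\cM,\tau)$ one must produce pairwise orthogonal projections simultaneously on both sides of each $x_n$ and control the remainders in the $L_{p,q}$-quasi-norm, which requires combining Kadec--Pelczynski type arguments with the full strength of the noncommutative Khinchine-type inequalities cited in the introduction. The two regimes $q<p$ and $q>p$ will likely need to be treated separately, since the noncommutative Khinchine inequality takes distinct forms (row--column type versus diagonal type) in the two cases.
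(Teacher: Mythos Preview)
Your overall architecture---contradiction, subsequence splitting, and a dichotomy between an ``orthogonal'' part and a ``remainder'' part---matches the paper's. But the splitting you describe is not quite the one used, and more importantly your handling of the non-orthogonal part has a genuine gap.

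First, the splitting. The paper invokes the Dodds--Dodds--Sukochev form (their Theorem~2.7): after passing to a subsequence, $x_k = u_k + v_k + w_k$ with $\{v_k\}$ pairwise orthogonal, $w_k \to 0$ in norm, and the $u_k$ \emph{equimeasurable}, i.e.\ $\mu(u_k)=\mu(u)$ for a single function $u\in L_{p,q}(0,1)$, so that $u_k=i_k(u)$ for trace-preserving $*$-homomorphisms $i_k$. The fact that the non-orthogonal part is \emph{equimeasurable}, not merely equi-integrable, is what drives all subsequent estimates.

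Second, and this is the real gap: your claim that a bounded $\tau$-equi-integrable sequence in $L_{p,q}(\cM,\tau)$ ``lies, after further extraction, in $L_s(\cM,\tau)$ for some $s>p$'' is false (take $z_n$ constant equal to a fixed $z\in L_{p,q}\setminus L_s$), and even granting it, Khinchine inequalities do not by themselves force ``$\ell_s$- or $\ell_2$-type'' behaviour on the span. The invocation of \cite{Rand} is also in the wrong direction. The paper instead exploits the equimeasurable structure explicitly by building concrete operators
\[
A_n x = \sum_{k=0}^n (k+1)^{-1/2} r_k \otimes i_k(x), \qquad B_n x = \sum_{k=0}^n r_k \otimes i_k(x),
\]
and proving sharp growth bounds for them via Khinchine (Theorems~2.6--2.8) combined with, respectively, a delicate $L_{2,1}$ logarithmic inequality (Lemmas~3.1--3.5) and an anti-monotonicity of $L_{p/2,q/2}$ under majorization when $p<2$ (Lemma~3.6). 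These yield $\|A_n u\|=o(\log^{1/q} n)$ and $\|B_n u\|=o(n^{1/p})$, which is exactly what contradicts equivalence of $\{u_k\}$ (or $\{u_k+v_k\}$) with the $\ell_{p,q}$-basis. The case $p>2$ is handled separately and more directly: one shows $u=0$ via Kahane's inequality and the inclusion $L_{p,q}(0,1)\subset L_r$ for $2<r<p$, reducing to the known commutative result. None of this is captured by your Step~2, and the cases $p<2$, $p=2$ with $q<2$, and $p=2$ with $q>2$ each require their own argument.
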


 We quote the following question \cite[Question 8]{JSZ}:
 \begin{quote}
   Suppose (an ideal of compact operators) $\mathcal I$ admits an isomorphic embedding into $\mathcal{L}_p(\cM,\sigma)$, $1\le p<2$, for some finite von Neumann algebra $(\cM,\sigma)$. Is it true that the commutative core $I$ of the ideal 
   $\mathcal{I}$ admits an isomorphic embedding into $L_p(0,1)$?
 \end{quote}
 If one considers  the question in  setting of $L_{p,q}(\cM,\tau)$,  $p>2$,  rather than $L_p(\cM,\tau)$ with $1\le p<2$, then we have an affirmative answer, see Theorem \ref{sequence_com_nc} below (the result holds for more general symmetric spaces), which proves   
  Theorem \ref{main theorem} for $p>2$. 
 The case for $1<p\le 2$ requires different techniques. 
  The difficulty in the study of subspace of  $L_{2,q}$ has been demonstrated in \cite{KS,SS} and \cite{CD89}.

 As an application of Theorem \ref{main theorem}, we obtain the following Hasse diagram of isomorphic embeddings in Section \ref{application} (see Corollaries \ref{cor5.3} and \ref{infinityinto}). As before, 
  for any spaces $X\ne Y$   listed in the tree below, $X$ is isomorphic to a subspace (indeed,   a complemented subspace) of $Y$ if and only  if $X$ can be joined to $Y$ through a descending branch.

%
%\begin{center}
%\begin{tikzpicture}
%  \node (a) at (0,2) {$(\oplus_{n=1}^\infty \ell_{p,q}^n )_q$};
%  \node (b) at (-2,0) {$L_{p,q}(0,1)$};
%  \node (c) at (1,1) {$\ell_{p,q}$};
%  \node (d) at (2,0) {$U_{p,q}$};
%  \node (e) at (-1,-1) {$L_{p,q}(0,1)\oplus \ell_{p,q}$};
%  \node (f) at (0,-2) {$L_{p,q}(0,1)\oplus U_{p,q}$};
%  \node (g) at (-4,-2)  {$L_{p,q}(\cM,\tau )$};
%   \node (h) at (-3,-3)  {$L_{p,q}(\cM,\tau )\oplus \ell_{p,q}$};
%      \node (i) at (-2,-4)  {$L_{p,q}(\cM,\tau )\oplus U_{p,q}$};
%  \node (j) at (2,-4) {$L_{p,q}(0,\infty )$};
%   \node (min) at (0,-6) {$L_{p,q}(\cM,\tau)\oplus L_{p,q}(0,\infty )$};
%    \node (title1) at  (0,-6.5) {where $(\cM,\tau)$ is an arbitrary non-trivial   noncommutative probability space};
%        \node (title2) at  (0,-7) { which is not of the form $\oplus_{1\le k <n}\mathbb{M}_k \otimes \cA_k$, $n<\infty$ ($\cA_k$ is a commutative algebra). };
%  \draw  (a) -- (c)
%    (a) -- (b)
%     (e) -- (c)--(d)
%     (b) --(e) --(f) -- (j)--(min)
%    (d)-- (f)--(i)
%    (b)--(g)
%    (g)--(h)
%    (e)--(h)--(i)--(min)
%  ;
% 
%\end{tikzpicture}
%\end{center}
{\tiny
\begin{center}
\begin{tikzpicture}
  \node (a) at (0,1.5) {$(\oplus_{n=1}^\infty \ell_{p,q}^n )_q$};
  \node (b) at (-1.5,0) {$L_{p,q}(0,1)$};
  \node (c) at (0.75,0.75) {$\ell_{p,q}$};
  \node (d) at (1.5,0) {$U_{p,q}$};
  \node (e) at (-0.75,-0.75) {$L_{p,q}(0,1)\oplus \ell_{p,q}$};
  \node (f) at (0,-1.5) {$L_{p,q}(0,1)\oplus U_{p,q}$};
  \node (g) at (-2.75,-1.25)  {$L_{p,q}(\cM,\tau )$};
   \node (h) at (-2,-2)  {$L_{p,q}(\cM,\tau )\oplus \ell_{p,q}$};
      \node (i) at (-1.25,-2.75)  {$L_{p,q}(\cM,\tau )\oplus U_{p,q}$};
  \node (j) at (1.25,-2.75) {$L_{p,q}(0,\infty )$};
   \node (min) at (0,-4) {$L_{p,q}(\cM,\tau)\oplus L_{p,q}(0,\infty )$};
    \node (title1) at  (0,-4.5) {where $(\cM,\tau)$ is an arbitrary non-trivial   noncommutative probability space};
        \node (title2) at  (0,-4.8) { which is not of the form $\oplus_{1\le k <n}\mathbb{M}_k \otimes \cA_k$, $n<\infty$ ($\cA_k$ is a commutative algebra). };
  \draw  (a) -- (c)
    (a) -- (b)
     (e) -- (c)--(d)
     (b) --(e) --(f) -- (j)--(min)
    (d)-- (f)--(i)
    (b)--(g)
    (g)--(h)
    (e)--(h)--(i)--(min)
  ;
 
\end{tikzpicture}
\end{center}
 }

\section{Preliminaries}\label{prel}

\subsection{Symmetric function spaces}

Let $I$ be $(0,1)$ or $(0,\infty)$ and let $L(I)$ denote the space of all Lebesgue-measurable functions $x$ on $I.$
For a Lebesgue-measurable function $x$ on $I,$ we define its {\it distribution function} by the formula
$$d_x(s)=m(\{t:\ x(t)>s\}),\quad s\in\mathbb{R},$$
where $m$ stands for Lebesgue measure. 
Denote by $S(I)$ the subalgebra of $L(I)$
 consisting of all functions $f$ such that $d_{|f|}(s) < \infty$  for some 
 $s > 0$.
 
Two measurable functions $x$ and $y$ are called {\it equimeasurable} (written, $x\sim y$) if their distribution functions $d_x$ and $d_y$ coincide. In particular, for every measurable function $x\in S    (I),$ the function $|x|$ is equimeasurable with its {\it decreasing rearrangement} $\mu(x)$ defined by the formula
$$\mu(t;x):=\inf \{\tau\geq0:\ d_{|x|}(\tau)<t \},\quad t>0.$$
If $ x,y\in S(I),$ then $\mu(x)=\mu(y)$ if and only if $|x|$ and $|y|$ are equimeasurable.% We recall that a function $x$ is said to be symmetrically distributed, if $x$ and $-x$ are equimeasurable.

\begin{defi} [see e.g. \cite{KPS}] Let $X\subset S(I)$ be a quasi-Banach space.
\begin{enumerate}[{\rm (a)}]
\item $X$ is said to be a quasi-Banach function space if, from $x\in X,$ $y\in S(I)$ and $|y|\leq |x|,$ it follows that $y\in X$ and $\left\|y\right\|_X\leq \left\|x\right\|_X.$
\item a quasi-Banach function space $X$ is said to be a symmetric if, for every $x\in X$ and any measurable function $y,$ the assumption $\mu(y)=\mu(x)$ implies that $y\in X$ and $\left\|y\right\|_X=\left\|x\right\|_X.$
\end{enumerate}
\end{defi}
Without lost of generality, in what follows we always assume that $\left\|\chi_{(0,1)}\right\|_X=1$ for any symmetric function space  $X=X(0,1)$ on $(0,1)$.

\subsection{Noncommutative symmetric spaces}For detailed exposition of material in this subsection,  we refer to \cite{DPS}.
In what follows,  $\cH$ is a  Hilbert space and $B(\cH)$ is the
$*$-algebra of all bounded linear operators on $\cH$ equipped with the uniform norm $\left\|\cdot\right\|_\infty$, and
$\mathbf{1}$ is the identity operator on $\cH$.
Let $\mathcal{M}$ be
a von Neumann algebra on $\cH$.
We denote by $\cP(\cM)$ the collection  of all projections in $\cM$, by $\cM'$ the commutant of $\cM$ and by $\cZ(\cM)$ the center of $\cM$.

A closed, densely defined operator $x:\mathfrak{D}\left( x\right) \rightarrow \cH $ with the domain $\mathfrak{D}\left( x\right) $ is said to be {\it affiliated} with $\mathcal{M}$
if $yx\subseteq xy$ for all $y\in \mathcal{M}^{\prime }$, where $\mathcal{M}^{\prime }$ is the commutant of $\mathcal{M}$.
A  closed,
densely defined
operator $x:\mathfrak{D}\left( x\right) \rightarrow \cH $ affiliated with $\cM $ is said to be
{\it measurable}  if  there exists a
sequence $\left\{ p_n\right\}_{n=1}^{\infty}\subset \cP\left(\mathcal{M}\right)$, such
that $p_n\uparrow \mathbf{1}$, $p_n(\cH)\subseteq\mathfrak{D}\left(x\right) $
and $\mathbf{1}-p_n$ is a finite projection (with respect to $\mathcal{M}$)
for all $n$.
 The collection of all measurable
operators with respect to $\mathcal{M}$ is denoted by $S\left(
\mathcal{M} \right) $, which is a unital $\ast $-algebra
with respect to strong sums and products (denoted simply by $x+y$ and $xy$ for all $x,y\in S\left( \mathcal{M%
}\right) $).

From now on, let $\mathcal{M}$ be a
semifinite von Neumann algebra equipped with a faithful normal
semifinite trace $\tau$.

An operator $x\in S\left( \mathcal{M}\right) $ is called $\tau$-measurable if there exists a sequence
$\left\{p_n\right\}_{n=1}^{\infty}$ in $P\left(\mathcal{M}\right)$ such that
$p_n\uparrow \mathbf{1}$, $p_n(\cH)\subseteq \mathfrak{D}\left(x\right)$ and
$\tau(\mathbf{1}-p_n)<\infty $ for all $n$.
The collection $S\left( \mathcal{M}, \tau\right)
$ of all $\tau $-measurable
operators is a unital $\ast $-subalgebra of $S\left(
\mathcal{M}\right) $.

Consider the algebra $\mathcal{M}=L^\infty(0,\infty)$ of all
Lebesgue measurable essentially bounded functions on $(0,\infty)$.
The algebra $\mathcal{M}$ can be seen as an abelian von Neumann
algebra acting via multiplication on the Hilbert space
$\mathcal{H}=L^2(0,\infty)$, with the trace given by integration
with respect to Lebesgue measure $m.$
It is easy to see that the
algebra of all $\tau$-measurable operators
affiliated with $\mathcal{M}$ can be identified with
the algebra $S(0,\infty)$.

\begin{definition}\label{mu}
Let $x\in
S(\mathcal{M},\tau)$. The generalized singular value function $\mu(x):t\mapsto  \mu(t;x)$ of
the operator $x$ is defined by setting
$$
\mu(s;x)
=
\inf\{\left\|xp\right\|_\infty:\ p\in \cP(\cM)\mbox{ with}\ \tau(\mathbf{1}-p)\leq s\}, ~ \forall s\in (0,\infty ).
$$
\end{definition}

%It is well-known \cite{LSZ,DP2} that if $a\in S(\cM,\tau)$ and $b,c\in \cM$, then
%\begin{align}\label{ineqfact}
%\mu(t;bac)\le \left\|b\right\|_\infty \left\|c\right\|_\infty \mu(t;a), ~\mu(t;a^*)=\mu(t;a).
%\end{align}
%{\color{red}In particular, $\mu(A) =\mu(|A|)$.}
%{\color{red}Suppose that $X\in S(\cM,\tau)$. If $0<\alpha \in \mathbb{R}$ and $E = E^{|X|}(\alpha,\infty)$, then
%\begin{align}\label{mu_shift}
%\mu(|X|E) = \mu(X)\chi_{[0,\tau(E))}
%\end{align}
%and
%\begin{align}\label{mu_shift2}
%\mu(t; |X|E^\perp) = \mu(t+\tau(E);X)
%\end{align}
%for all $t\ge 0$ whenever $\tau(E)<\infty$.}

\begin{definition}\label{def:symmetric}
 A linear subspace $E$ of $S(\cM,\tau)$ equipped with a complete norm $\norm{\cdot}_E$, is called a symmetric space (of $\tau$-measurable operators) if $x\in S(\cM,\tau)$, $y \in E$ and $\mu(x)\le \mu(y)$ imply that $x\in E$ and $\norm{x}_E \le \norm{y}_E$.
\end{definition}

It is well-known that any symmetric space $E$ is a normed $\cM$-bimodule, that is, $axb\in E$ for any $x\in E$, $a,b\in \cM$ and $\left\|axb\right\|_E\leq \|a\|_\infty\left\|b\right\|_\infty \left\|x\right\|_E$ \cite{DP2,DPS}.
%A linear subspace $E$ of $S(\cM,\tau)$ equipped with a complete norm $\|\cdot\|_E$, is called \emph{fully symmetric space} (of $\tau$-measurable operators) if $X\in S(\cM,\tau)$, $Y \in E$ and $X\prec\prec Y$ imply that $X\in E$ and $\|X\|_E \le \|Y\|_E$.
%

%
%
%
%
%The so-called K\"{o}the dual is identified with an important part of the dual space. If $E  \subset S(\cM,\tau)$ is a symmetric space, then the K\"{o}the dual $E^\times $ of $E$ is defined by setting $$ E^\times =\{   X\in S(\cM,\tau) : \sup_{\|Y\|_E\le 1, Y\in E}\tau (|XY|)   <\infty    \}.$$
%

 A wide class of symmetric operator spaces associated with the von Neumman algebra $\cM$ can be constructed from concrete symmetric function spaces studied extensively in e.g. \cite{KPS}. Let 
 $\cE :=  E(0,\infty) $ be a symmetric function space on the semi-axis $(0,\infty)$ (or $\cE : =E(0,1) $ for a noncommutative probability space $(\cM,\tau)$, i.e., $\tau$ is a faithful normal tracial state). Then the pair 
 $$E(\cM,\tau)=\{x\in S(\cM,\tau):\mu(x)\in \cE \},\quad \left\|x\right\|_{E(\cM,\tau)}:=\left\|\mu(X)\right\|_{\cE }$$ is a symmetric space on $\cM$ \cite{Kalton_S} (see also \cite{DPS,LSZ}). 
% For convenience, we denote $\left\|\cdot\right\|_{ E(\cM,\tau)}$ by $\left\|\cdot\right\|_E $. 
%
% 

% If $\tau>0,$ the dilation operator $\sigma_{\tau}$ is defined by setting 
% $\sigma_{\tau}x(s)=x(s/{\tau}),$ $s>0,$ in the case of the semi-axis. In the case of the interval $(0,1),$ the operator $\sigma_{\tau}$ is defined by
% $$
% \sigma_{\tau}x(s)=
% \begin{cases}
% x(s/\tau),& s\leq\min\{1,\tau\}\\
% 0,& \tau<s\leq1.
% \end{cases}
% $$

We write $B\prec\prec A$ (and say that $B$ is submajorized by $A$ in the sense of Hardy--Littlewood--P\'{o}lya) if
$$\int_0^t\mu(s,B)ds\leq\int_0^t\mu(s,A)ds,\quad t>0.$$
If $  A,B\in L_1(\mathcal{M},\tau)$ are positive  operators  such that $B\prec\prec A$ and $\tau(B)=\tau(A),$ then we write $B\prec A$ (and say that $B$ is majorized by $A$ in the sense of Hardy--Littlewood--P\'{o}lya).

Recall  the following properties of submajorization (see e.g. \cite[Theorem  3.3.3 and Lemma 3.3.7]{LSZ})
\begin{equation}\label{maj property1}
A+B\prec\prec\mu(A)+\mu(B),\quad A,B\in (L_1+L_{\infty})(\mathcal{M},\tau)
\end{equation}
and 
\begin{equation}\label{maj property2}
A\oplus B\prec\prec A+B,\quad 0\leq A,B\in (L_1+L_{\infty})(\mathcal{M},\tau).
\end{equation}
%If also $0\leq A,B\in L_1(\mathcal{M},\tau),$ then we can replace $\prec\prec$ with $\prec.$

\subsection{$L_{p,q}$ and $\ell_{p,q}$ spaces}

We set
$$L_{p,q}(I)=\Big\{ x\in S(I):\ \int_0^1\mu^q(s;x)ds^{\frac{q}{p}}<\infty\Big\}.$$
It becomes quasi-Banach space when equipped with the quasi-norm
$$\left\|x\right\|_{p,q}=\Big(\int_0^1\mu^q(s;x)ds^{\frac{q}{p}}\Big)^{\frac1q},\quad x\in L_{p,q}(I).$$
This quasi-norm is a norm for $1\le q\leq p$ and is equivalent to a norm for $q>p>1$\cite{BS}.

Similarly, we define sequence space $\ell_{p,q}$ by setting 
\begin{align}\label{lpqsequence}
\ell_{p,q}=\left\{x\in \ell_{\infty}: \sum_{k\geq0}\mu(k;x)^q(k+1)^{\frac{q}{p}-1}<\infty\right\}.
\end{align}
It becomes quasi-Banach space when equipped with the quasi-norm
$$\left\|x\right\|_{\ell_{p,q}}=\Big(\sum_{k\geq0}\mu(k;x)^q (k+1)^{\frac{q}{p}-1}\Big)^{\frac1q},\quad x\in \ell_{p,q}.$$
Again, this quasi-norm is a norm for  $1\le q\leq p$ and is equivalent to a norm for $q>p>1$.
For detailed exposition of $L_{p,q}$-spaces,  see \cite{BS} and \cite{Dilworth}.

We recall below a well-known result (see e.g. \cite[Lemma 2.1]{CD}, see also  \cite{ACL, Dilworth,KamMal,LT1,Tradacete}).
\begin{lem}\label{2.1}
Suppose that $0<p,q<\infty$.
Let $\left\{f_n\right\}_{n\ge 1}$ be a sequence of unit vectors in $L_{p,q}(0,\infty)$ such that $f^*_n \to 0$ pointwise as $n \to \infty $.
Then,
there exists a  subsequence of $\{f_n\}_{n\ge 1}$ which
is equivalent to the unit vector basis of $\ell_q$.
\end{lem}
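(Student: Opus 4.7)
The plan is to apply a Kadec--Pelczynski-type disjointification to pass to a sequence that is essentially disjointly supported, then to further extract a subsequence whose support measures grow geometrically, and finally to verify the $\ell_q$-equivalence by a direct computation using the distribution-function formula for the $L_{p,q}$-quasinorm.

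First I would translate the hypothesis: $f_n^* \to 0$ pointwise on $(0,\infty)$ is equivalent to $m(\{|f_n|>\epsilon\}) \to 0$ for every $\epsilon > 0$. Combined with $\|f_n\|_{p,q}=1$ and the order continuity of the $L_{p,q}$-quasinorm (valid for $q < \infty$), a standard truncation argument yields a subsequence $\{f_{n_k}\}$ together with a disjointly supported sequence $\{g_k\}\subset L_{p,q}(0,\infty)$ of unit vectors such that $\sum_k \|f_{n_k}-g_k\|_{p,q}$ is arbitrarily small; concretely, one truncates $f_{n_k}$ at a level $\alpha_k\downarrow 0$ and restricts to a set disjoint from the supports of the previously chosen $g_j$, using $m(\{|f_{n_k}|>\alpha_k\})\to 0$ to arrange disjointness. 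Since $g_k^* \to 0$ pointwise is inherited, the support measures $t_k := m(\mathrm{supp}\,g_k)$ must tend to infinity, so passing to a further subsequence one may assume $t_{k+1}/t_k \ge C$ for a prescribed constant $C > 1$.

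Next I would use the distribution-function representation $\|h\|_{p,q}^q = q\int_0^\infty \lambda^{q-1} d_h(\lambda)^{q/p}\,d\lambda$ and the identity $d_{\sum a_k g_k}(\lambda) = \sum_k d_{g_k}(\lambda/|a_k|)$ (valid for disjointly supported $g_k\ge 0$). One direction of the equivalence
$$\Big\|\sum_k a_k g_k\Big\|_{p,q}^q \asymp \sum_k |a_k|^q$$
is immediate from sub- or super-additivity of $s\mapsto s^{q/p}$ (according as $q\le p$ or $q\ge p$), combined with the change of variables $\lambda=|a_k|s$ which gives $q\int_0^\infty \lambda^{q-1}d_{g_k}(\lambda/|a_k|)^{q/p}d\lambda=|a_k|^q\|g_k\|_{p,q}^q=|a_k|^q$. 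The reverse inequality exploits the geometric separation of the $t_k$: at almost every $\lambda > 0$ at most one term in the collection $\{d_{g_k}(\lambda/|a_k|)\}_k$ is comparable to its maximum $t_k$, so $[\sum_k d_{g_k}(\lambda/|a_k|)]^{q/p}$ is comparable to $\sum_k d_{g_k}(\lambda/|a_k|)^{q/p}$ with a constant depending only on $C$ (which tends to $1$ as $C \to \infty$). The small-perturbation principle then transfers the $\ell_q$-equivalence from $\{g_k\}$ back to a subsequence of $\{f_{n_k}\}$.

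The hard part is the reverse direction of the norm comparison. For general scalars $(a_k)$ and general profiles $g_k$, the function $\lambda\mapsto d_{g_k}(\lambda/|a_k|)$ is large across an interval of $\lambda$ determined by the interplay of $|a_k|$ and the ``shape'' of $g_k^*$, and in principle several indices may contribute simultaneously. The geometric separation $t_{k+1}/t_k \ge C$ is precisely what partitions the $\lambda$-axis into regions in which a single index dominates, with cross-contributions from the other indices forming geometric tails controlled by $C$; a re-sorting argument handles the case when the sequence $(|a_k|)$ is not monotone in $k$, effectively reducing the estimate to the model case $g_k = t_k^{-1/p}\chi_{E_k}$ where the computation can be carried out explicitly via an Abel summation.
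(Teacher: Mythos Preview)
The paper does not prove this lemma; it simply quotes it as a known result and cites Carothers--Dilworth \cite[Lemma~2.1]{CD} and related references. So there is no in-paper argument to compare against, and your proposal has to stand on its own.

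Your outline contains a genuine error. You assert that after disjointification ``the support measures $t_k := m(\mathrm{supp}\,g_k)$ must tend to infinity,'' and then build the entire reverse inequality on the geometric separation $t_{k+1}/t_k \ge C$. This is false under your own construction: you obtain $g_k$ by truncating $f_{n_k}$ at level $\alpha_k$ and discarding the small part, so $\mathrm{supp}\,g_k \subset \{|f_{n_k}| > \alpha_k\}$, and you yourself use that $m(\{|f_{n_k}|>\alpha_k\}) \to 0$ to arrange disjointness. Thus $t_k \to 0$, not $\infty$. A concrete counterexample to the claim is the peaked sequence $f_n = n^{1/p}\chi_{E_n}$ with $m(E_n)=1/n$: these are unit vectors with $f_n^* \to 0$ pointwise, and any disjointification has $t_k \to 0$. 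Conversely, for the flat sequence $f_n = n^{-1/p}\chi_{[0,n]}$ one gets $t_k \to \infty$. In general the $t_k$ need not tend to either extreme (mix a peak with a small flat piece to get $t_k \equiv 1$), so no subsequence achieves geometric separation of the raw support measures.

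The correct separation is not of $m(\mathrm{supp}\,g_k)$ but of the interval on which the $L_{p,q}$-\emph{mass} of $g_k^*$ lives. The key observation is that for every fixed $0<a<b<\infty$ one has
\[
\int_a^b g_k^*(t)^q \, dt^{q/p} \le g_k^*(a)^q\!\int_a^b dt^{q/p} \longrightarrow 0,
\]
since $g_k^*(a)\to 0$; hence the $L_{p,q}$-norm of $g_k^*$ escapes every compact subinterval of $(0,\infty)$. From this one extracts, by a gliding-hump argument on the rearrangement, a subsequence and lacunary intervals $I_k\subset(0,\infty)$ with $\|g_k^*\chi_{I_k^c}\|_{p,q}$ small and $\sup I_k / \inf I_{k+1}$ (or the reciprocal) large. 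It is this lacunarity of the $I_k$, not of the $t_k$, that drives the $\ell_q$-equivalence. Your last paragraph's ``reduction to the model case $g_k=t_k^{-1/p}\chi_{E_k}$'' via a re-sorting of $(|a_k|)$ does not bridge this gap: different shapes of $g_k^*$ on a common support can interact in $d_{\sum a_kg_k}$ in ways that the characteristic-function computation does not capture.
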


\subsection{Khinchine-type inequalities}
Let $\{r_k\}_{k\ge 1}$ be the Rademacher functions on $(0,1)$. 
\begin{thm}\label{vector vs tensor} Let $1<p<\infty$ and $1\leq q<\infty$ and let $A_k\in L_{p,q}(\mathcal{M},\tau),$ $k\geq 1 .$ The exists constant $c_{p,q}>0$ depending on $p,q$ only such that 
\begin{enumerate}[{\rm (a)}]
\item if $q\leq p,$ then
$$\inf_{t\in(0,1)}\left\|\sum_{k\geq 1}r_k(t)A_k\right\|_{L_{p,q}(\cM,\tau)}\leq c_{p,q}\left\|\sum_{k\geq 1}r_k\otimes A_k\right\|_{L_{p,q}(L_\infty \overline{\otimes} \cM )} $$
and 
$$\int_0^1\left\|\sum_{k\geq 1}r_k(t)A_k\right\|_{L_{p,q}(\cM,\tau)} dt\leq c_{p,q}\left\|\sum_{k\geq 1}r_k\otimes A_k\right\|_{L_{p,q}(L_\infty \overline{\otimes} \cM )};$$
\item if $q\geq p,$ then
$$\left\|\sum_{k\geq 1 }r_k\otimes A_k\right\|_{L_{p,q}(L_\infty \overline{\otimes} \cM )}\leq c_{p,q}\sup_{t\in(0,1)}\left\|\sum_{k\geq 1 }r_k(t)A_k\right\|_{L_{p,q}(\cM,\tau)}.$$
\end{enumerate}
\end{thm}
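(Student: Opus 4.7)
The plan is to bypass any genuine Rademacher structure and obtain both inequalities from a direct Jensen argument applied to the layer-cake representation of the $L_{p,q}$-norm. I would identify $X := \sum_{k\ge 1} r_k \otimes A_k \in L_\infty(0,1)\overline{\otimes}\cM$ with the $\cM$-valued field $t \mapsto X(t) := \sum_k r_k(t) A_k$, and observe that since the trace on the tensor product is the Fubini product of $m$ and $\tau$, the joint distribution function satisfies
$$d_{|X|}^{m \otimes \tau}(\lambda) = \int_0^1 d_{|X(t)|}^{\tau}(\lambda)\, dt, \qquad \lambda > 0.$$
Combining this with the standard layer-cake identity
$$\|Y\|_{L_{p,q}}^q = q \int_0^\infty \lambda^{q-1}\, d_{|Y|}(\lambda)^{q/p}\, d\lambda$$
expresses both quantities being compared as $\lambda$-integrals of powers of the (pointwise or $t$-averaged) distribution function of $|X(t)|$.

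For part (a), $q \le p$ makes $u \mapsto u^{q/p}$ concave on $[0,\infty)$, so Jensen's inequality inside the integral over $t$ yields
$$\Big(\int_0^1 d_{|X(t)|}(\lambda)\, dt\Big)^{q/p} \;\ge\; \int_0^1 d_{|X(t)|}(\lambda)^{q/p}\, dt.$$
Multiplying by $q\lambda^{q-1}$, integrating in $\lambda$ and applying Fubini then gives
$$\|X\|_{L_{p,q}(L_\infty\overline{\otimes}\cM)}^q \;\ge\; \int_0^1 \|X(t)\|_{L_{p,q}(\cM)}^q\, dt.$$
A second Jensen (power-mean inequality, valid since $q \ge 1$) converts the $L^q(dt)$-average into the $L^1(dt)$-average, and the infimum bound is then immediate from $\inf \le$ average.

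For part (b), $q \ge p$ reverses concavity to convexity, so the Jensen step reverses; combined with the trivial estimate $\int_0^1 \|X(t)\|_{p,q}^q\, dt \le \sup_t \|X(t)\|_{p,q}^q$, this yields $\|X\|_{L_{p,q}(L_\infty \overline{\otimes}\cM)} \le \sup_t \|X(t)\|_{L_{p,q}(\cM)}$.

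The remaining issues are largely bookkeeping: justifying the Fubini-type splitting of the joint distribution function in the noncommutative tensor product (standard once $X$ is realized as a strongly measurable $\cM$-valued field) and, when $q > p$, moving between the canonical quasi-norm and an equivalent norm on $L_{p,q}$ — this last step is the only source of the constant $c_{p,q}$. Notably, the Rademacher functions $r_k$ play no essential role; the same argument establishes the analogous inequalities for any $X \in L_{p,q}(L_\infty(0,1)\overline{\otimes}\cM)$ in place of $\sum_k r_k \otimes A_k$.
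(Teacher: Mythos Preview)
Your argument is correct and takes a genuinely different route from the paper.  The paper proves (a) by invoking that $L_{p,q}$ is $D^*$-convex when $q\le p$ (citing \cite[Appendix A(d)]{PS} and \cite[Proposition 2.5]{Spos98}), and proves (b) by dualizing to obtain $D$-convexity of $L_{p,q}$ when $q\ge p$ (via \cite[Appendix A(a),(c)]{PS}); in both cases the inequality is read off from those abstract structural properties.  Your approach bypasses this machinery entirely: the Fubini identity $d^{m\otimes\tau}_{|X|}(\lambda)=\int_0^1 d^{\tau}_{|X(t)|}(\lambda)\,dt$ is immediate here because $\sum_k r_k\otimes A_k$ is piecewise constant in $t$ (so $|X|$ and its spectral projections decompose fiberwise over the dyadic partition), and then Jensen applied to $u\mapsto u^{q/p}$ inside the layer-cake integral does all the work.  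What your approach buys is a self-contained, elementary proof that in fact yields constant $1$ at the level of the canonical quasi-norm, and you are right that the Rademacher structure is irrelevant --- the same inequality holds for any simple $\cM$-valued function on $(0,1)$.  What the paper's approach buys is a connection to the general $D$/$D^*$-convexity framework, which is reusable for other symmetric spaces beyond $L_{p,q}$.  Your remark about the constant arising only from passing between the quasi-norm and an equivalent norm when $q>p$ is accurate but could be dropped: since both sides of the inequality are computed with the same functional, the inequality with constant $1$ for the quasi-norm already suffices for every later use in the paper.
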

\begin{proof} 
Note that if $q\le p$, then $L_{p,q}$  is $D^*$-convex, see \cite[Appendix A(d)]{PS} or \cite[Proposition 2.5]{Spos98}.
By \cite[Proposition 2.5]{Spos98},  we have 
\begin{align*}
\int_0^1  \norm{ \sum_{k=1}^{ n} r_k (t) A_k    }_{L_{p,q}(\cM,\tau)}  dt & = \int_0^1  \norm{ \sum_{i=1}^{2^{n}}  \chi_{i}^{(n)}(t ) \left( \sum_{k=1}^n (-1)^{i_k}  A_k  \right  )   }_{L_{p,q}(\cM,\tau)}  dt \\
&\lesssim  \norm{ \sum_{i=1}^{2^{n}} \chi_{i}^{(n)}\otimes  \left( \sum_{k=1} ^{n }  (-1)^{i_k}  A_k   \right )   }_{L_{p,q}(\cM,\tau)}\\
&=\left\|\sum_{k=1}^{n}  r_k\otimes A_k\right\|_{L_{p,q}(L_\infty \overline{\otimes} \cM )}, 
\end{align*}
where $ \chi_i^{(n)}$ stands for the indicator function of the interval $[( i-1)\cdot 2^{-n}, i \cdot 2^{-n})$, $\sum_{k=1}^{n } i_k 2^{n-k  } =i-1$, $i_k=\pm 1$. 
This proves the first assertion. 

If $q\ge p$, then $q'\le p'$, where $\frac{1}{p}+\frac{1}{p'}=1$ and $\frac{1}{q}+\frac{1}{q'}=1$. 
By \cite[Appendix A(d)]{PS},   $L_{p',q'}$ is $D^*$-convex. 
Since 
 $L_{p,q}$ coincides with the dual of $L_{p',q'}$, 
it follows from   \cite[Appendix A(a)]{PS} (or \cite[Proposition 2.3]{Spos98}) that 
  $L_{p,q}$  is $D$-convex.
By \cite[Appendix A(c)]{PS} (see also \cite[Propositions 2.3 and  2.5]{Spos98}), we have 
$$\norm{\sum_{k=1}^n r_k \otimes A_k }_{L_{p,q}(L_\infty (0,1)\overline{\otimes} \cM)}\lesssim \norm{\sum_{k=1}^n r_k \otimes A_k }_{L_\infty (L_\infty (0,1)\overline{\otimes} L_{p,q}(\cM,\tau))},$$
which completes the proof. 
\end{proof}
 
Recall that $L_{p,q}$ is $1$-convex,  $\max\{p,q\}+\varepsilon$-concave, $\varepsilon>0$\cite[Theorem 3.4]{Creekmore} and both Boyd indices of $L_{p,q}$ 
are equal to $p$\cite{BS}.
The following  results follow from  \cite[Theorems 4.1 and 4.11]{DPPS} (see also \cite[Theorem 1.1]{LS}).

\begin{thm}\label{lust-piquard} 
Let $\cM$ be a von Neumann algebra equipped with a   semifinite faithful normal trace $\tau.$
For   any finite sequence $(A_k)$ of self-adjoint operators in $L_{p,q}(\cM,\tau)$, we have 
$$\left\|\sum_{k\geq 1 }r_k\otimes A_k \right\|_{L_{p,q}(L_\infty \overline{\otimes} \cM )}\leq c_{p,q}\left\|\left(\sum_{k\geq 1}A_k^2\right )^{\frac12}\right\|_{L_{p,q}(\cM,\tau)},$$
where $c_{p,q}$ is a constant depending on $p$ and $q$ only. 
\end{thm}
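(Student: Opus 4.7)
The plan is to reduce Theorem \ref{lust-piquard} to the general noncommutative Khinchine-type inequality for symmetric operator spaces developed in \cite{DPPS}. First I would verify that $L_{p,q}(\cM,\tau)$ satisfies the hypotheses required for \cite[Theorems 4.1 and 4.11]{DPPS}. The paper has already recorded the relevant data immediately before the theorem: $L_{p,q}$ is $1$-convex, $(\max\{p,q\}+\varepsilon)$-concave for every $\varepsilon>0$, and both Boyd indices of $L_{p,q}$ equal $p\in(1,\infty)$. These conditions place $L_{p,q}$ strictly inside the range in which the symmetric-space Khinchine inequality is available.

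Next I would invoke the general upper square-function estimate. For any symmetric space $E$ satisfying the above indices condition, \cite[Theorems 4.1 and 4.11]{DPPS} supply a constant $c_E$ such that, for every finite sequence $(A_k)\subset E(\cM,\tau)$,
$$\left\|\sum_{k\ge 1}r_k\otimes A_k\right\|_{E(L_\infty\overline{\otimes}\cM)} \le c_E\Bigl(\bigl\|(\textstyle\sum_k A_kA_k^*)^{1/2}\bigr\|_E + \bigl\|(\textstyle\sum_k A_k^*A_k)^{1/2}\bigr\|_E\Bigr).$$
Specialising to self-adjoint $A_k$, the row and column square functions both collapse to $(\sum_k A_k^2)^{1/2}$, which yields exactly the statement of Theorem \ref{lust-piquard} with $c_{p,q}:=2c_E$ for $E=L_{p,q}$. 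The alternative reference \cite[Theorem 1.1]{LS} provides the same bound through a slightly different but equivalent route, and can be used as a cross-check.

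The main obstacle, were one to reprove the estimate from scratch, would be obtaining the upper square-function bound uniformly across the two regimes $q\le p$ and $q\ge p$, because the quasi-norm changes its convexity behaviour at $q=p$: when $q\le p$ one can exploit $D^*$-convexity (as already used in the proof of Theorem \ref{vector vs tensor}), while for $q\ge p$ one must pass to the dual $L_{p',q'}$ with $q'\le p'$ and transfer the inequality via duality and $D$-convexity. In the DPPS framework, however, precisely this interpolation-theoretic bookkeeping is built into the Boyd-index hypothesis, so the desired inequality follows directly once the indices of $L_{p,q}$ are verified. Hence the proof amounts to recording the structural properties of $L_{p,q}$ and quoting \cite[Theorems 4.1 and 4.11]{DPPS}.
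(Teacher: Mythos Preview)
Your proposal is correct and matches the paper's approach exactly: the paper does not give an independent proof but simply records that $L_{p,q}$ is $1$-convex, $(\max\{p,q\}+\varepsilon)$-concave with both Boyd indices equal to $p$, and then states that the inequality follows from \cite[Theorems 4.1 and 4.11]{DPPS} (with \cite[Theorem 1.1]{LS} as an alternative reference). Your write-up is in fact more detailed than the paper's, which contains no proof environment for this theorem at all.
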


We denote by $L_1(L_{p,q}(\cM,\tau))$ the Bochner space $L_1((0,1); L_{p,q}(\cM,\tau))$. 
\begin{thm}\label{vector lust-piquard} 
Let $\cM$ be a von Neumann algebra equipped with a   semifinite faithful normal trace $\tau.$
Let $ (p,q)\in (1,\infty)\times [1,\infty )$. 
For any finite sequence $(A_k)$ of self-adjoint operators in $L_{p,q}(\cM,\tau)$, we have 
$$\norm{\sum_{k\ge 1}r_k\otimes A_k}_{L_1(L_{p,q}(\cM,\tau))} =\int_0^1\left\|\sum_{k\geq 1 }r_k(t)A_k\right\|_{L_{p,q}(\cM,\tau)} dt\leq c_{p,q}\left\|\left(\sum_{k\geq 1 }A_k^2 \right )^{\frac12}\right\|_{L_{p,q}(\cM,\tau)}, $$
where $c_{p,q}$ is a constant depending on $p$ and $q$ only. 
\end{thm}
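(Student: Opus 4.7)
The plan is to derive the estimate by chaining the two preceding theorems, splitting the argument according to the relative sizes of $p$ and $q$.

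In the range $1 \le q \le p$, the proof is a clean two-step composition. First, apply the integral form of Theorem~\ref{vector vs tensor}(a) (which applies precisely because $q \le p$: its proof exploits the $D^{*}$-convexity of $L_{p,q}$, a property that fails for $q>p$) to obtain
\[
\int_0^1 \Big\| \sum_{k \ge 1} r_k(t) A_k \Big\|_{L_{p,q}(\cM, \tau)}\, dt \;\le\; c_{p,q} \Big\| \sum_{k \ge 1} r_k \otimes A_k \Big\|_{L_{p,q}(L_\infty \overline{\otimes} \cM)}.
\]
Then apply Theorem~\ref{lust-piquard} to the right-hand side to bound the tensor norm by $c'_{p,q} \bigl\| (\sum_k A_k^2)^{1/2} \bigr\|_{L_{p,q}(\cM, \tau)}$, completing the proof with the overall constant equal to the product of the two.

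In the range $q > p$, the composition above breaks down because the first step is no longer available. My approach here would be real interpolation starting from the $L_r$-version of the inequality. For every $r \in (1, \infty)$ the diagonal case of the theorem (which is the $q=r \le p=r$ case, already settled above with $p$ replaced by $r$) yields the noncommutative Khinchine inequality in $L_r(\cM)$: the linear map
\[
T: (A_k)_k \;\longmapsto\; \sum_{k \ge 1} r_k \otimes A_k
\]
is bounded, with operator norm at most $c_r$, from the self-adjoint column square-function space $L_r^{c}(\cM;\ell_2^{\mathrm{sa}})$ (normed by $(A_k)\mapsto\|(\sum_k A_k^2)^{1/2}\|_{L_r(\cM)}$) into the Bochner space $L_1((0,1); L_r(\cM))$. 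Choose $r_0 \in (1,p)$ and $r_1 \in (p,\infty)$ with $1/p = (1-\theta)/r_0 + \theta/r_1$ and apply the real interpolation functor $(\,\cdot\,, \,\cdot\,)_{\theta, q}$ to this bounded map. Standard commutation of real interpolation with Bochner $L_1$-spaces identifies the interpolated target as $L_1((0,1); L_{p,q}(\cM))$, while the analogous real-interpolation identity for noncommutative column square-function spaces (from the theory of noncommutative symmetric spaces, as developed for instance in the framework of \cite{DPS}) identifies the interpolated source as $L_{p,q}^{c}(\cM; \ell_2^{\mathrm{sa}})$. Combining these yields the desired inequality.

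The technical heart is the $q > p$ case, specifically the real-interpolation identification of the column square-function space $L_{p,q}^{c}(\cM; \ell_2^{\mathrm{sa}})$ as an interpolation space between $L_{r_0}^{c}$ and $L_{r_1}^{c}$; this relies on nontrivial input from the theory of noncommutative symmetric spaces. The $q \le p$ case is, by contrast, just a direct two-line chain of the preceding theorems.
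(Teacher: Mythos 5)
The paper does not actually prove this statement from scratch: both Theorem \ref{lust-piquard} and Theorem \ref{vector lust-piquard} are imported by citing \cite[Theorems 4.1 and 4.11]{DPPS}, after recording that $L_{p,q}$ is $1$-convex, $(\max\{p,q\}+\varepsilon)$-concave and has both Boyd indices equal to $p$. Your treatment of the range $1\le q\le p$ is a correct and genuinely internal alternative: chaining the integral form of Theorem \ref{vector vs tensor}(a) with Theorem \ref{lust-piquard} gives exactly the asserted bound, with constant the product of the two, and there is no circularity since Theorem \ref{lust-piquard} is the tensor (not Bochner) version.

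The range $q>p$, however, contains a genuine gap: real interpolation does \emph{not} commute with the Bochner functor $L_1((0,1);\cdot)$ in the direction you need. From the exact identity $K(t,f;L_1(A_0),L_1(A_1))=\int_0^1 K(t,f(\omega);A_0,A_1)\,d\omega$ and Minkowski's integral inequality (valid since $q\ge 1$) one obtains the continuous inclusion $L_1\bigl((0,1);(A_0,A_1)_{\theta,q}\bigr)\subseteq \bigl(L_1(A_0),L_1(A_1)\bigr)_{\theta,q}$, and for $q>1$ this inclusion is in general proper and cannot be reversed. Interpolating $T$ between the endpoints therefore only shows that $T$ maps the column space boundedly into the \emph{larger} space $\bigl(L_1(L_{r_0}),L_1(L_{r_1})\bigr)_{\theta,q}$, which does not yield the stated bound in $L_1\bigl((0,1);L_{p,q}(\cM,\tau)\bigr)$. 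The step is repairable: by Kahane's inequality the $L_1$- and $L_q$-moments in $t$ of $\bigl\|\sum_k r_k(t)A_k\bigr\|_X$ are equivalent for any Banach space $X$, so one may instead interpolate with targets $L_q\bigl((0,1);L_{r_i}(\cM,\tau)\bigr)$, for which the Lions--Peetre theorem gives the exact identification $\bigl(L_q(A_0),L_q(A_1)\bigr)_{\theta,q}=L_q\bigl((A_0,A_1)_{\theta,q}\bigr)$; alternatively one simply invokes \cite[Theorem 4.11]{DPPS}, as the paper does, which covers all $(p,q)$ at once. Two smaller points: the identification of the interpolated source is fine if you realize $L_r^{c}(\cM;\ell_2)$ as the simultaneously $1$-complemented corner $L_r(\cM\overline{\otimes}B(\ell_2))(\mathbf{1}\otimes e_{11})$, but you should interpolate on the full complex column space and only afterwards restrict to self-adjoint sequences, since the self-adjoint part is merely a real-linear subspace.
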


\subsection{Subsequence splitting property}\label{splitting}

 The subsequence splitting principle below  shows that in noncommutative $L_{p,q}$-spaces, each bounded sequence contains a subsequence
which is a perturbation of the sum of an equimeasurable sequence
and a sequence which is left and right disjointly supported.

A symmetric space $\cE\subset S(\cM,\tau)$ is said to have the \emph{Fatou property} if for every upwards directed net $\{a_\beta\}$ in $\cE^+$, satisfying $\sup_\beta \left\|a_\beta\right\|_\cE <\infty$, there exists an element $a\in \cE^+$ such that $a_\beta \uparrow a$ and $\left\|a\right\|_\cE = \sup_\beta\|a_\beta\|_\cE$. Examples such as Lorentz spaces, Orlicz spaces, etc. all have symmetric norms which have  Fatou property.

For the  case of non-selfadjoint operators, the proof of the following theorem was  given in \cite[Proposition 2.7]{DDS07}. For the special case of self-adjoint operators, 
the proof follows along the same lines by   replacing  \cite[Lemma 2.4]{DDS07} with \cite[Lemma 7.1]{DPS16} (see also \cite[Theorem 2.5]{CDS} or \cite[Lemma 7.2]{HNPS}). 
Note that if $E(0,\infty)$ has the Fatou property, then $E(\cM,\tau)$ has the Fatou property \cite{DP2,DPS}. 

\begin{thm}\label{subsequence splitting property}
Let $\cM$ be an atomless von Neumann equipped with a semifinite faithful normal trace $\tau$. 
Let $E(0,\infty )$ be a separable  symmetric space having the Fatou property. 
Let $\{x_k\}_{k\geq0}\subset E(\mathcal{M},\tau)$ be a bounded  sequence of self-adjoint operators. 
There exists a subsequence $\{x'_k\}_{k\geq0}$ of $\{x_k\}_{k\geq0}$ such that 
\begin{enumerate}[{\rm (a)}]
\item $x_k'=u_k+v_k+w_k,$ for all $k\geq0;$
\item every $u_k,v_k,w_k\in E(\mathcal{M},\tau),$ $k\geq0,$ is self-adjoint;
\item elements of the sequence $\{\mu(u_k)\}_{k\geq0}$ are equimeasurable;
\item elements of the sequence $\{v_k\}_{k\geq0}$ are pairwise orthogonal, i.e. $v_kv_l=0,$ $k\neq l;$
\item $w_k\to0$ in $E(\mathcal{M},\tau)$ as $k\to\infty.$
\end{enumerate}
If, in addition, that $\{x_k\}_{k\geq0}$ is weakly null, then $\{u_k\}_{k\geq0}$ and $\{v_k\}_{k\geq0}$ may be chosen to be weakly null as well. 
\end{thm}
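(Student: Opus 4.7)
The strategy is to retrace the proof of \cite[Proposition 2.7]{DDS07}, which handled the general (not-necessarily-self-adjoint) case, with a single technical substitution: wherever that argument invokes \cite[Lemma 2.4]{DDS07} to produce a spectral-type splitting via the polar decomposition of $x_k$ (whose pieces are compressions by spectral projections of $|x_k|$ and hence generally not self-adjoint), one instead invokes \cite[Lemma 7.1]{DPS16} to split directly via the spectral projections of the self-adjoint operator $x_k$ itself. Because those spectral projections commute with $x_k$, the resulting compressions are automatically self-adjoint, which secures condition (b). Alternative references for this self-adjoint substitute are \cite[Theorem 2.5]{CDS} and \cite[Lemma 7.2]{HNPS}.

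Concretely, the plan proceeds in four steps. First, using $\sup_k\|x_k\|_{E(\cM,\tau)}<\infty$, apply Helly's selection principle to the decreasing singular value functions $\mu(x_k)$ on $(0,\infty)$ to extract a subsequence along which $\mu(x_k)\to f$ pointwise almost everywhere; the Fatou property of $E(0,\infty)$ guarantees $f\in E(0,\infty)$ with $\|f\|_{E}\le\liminf_k\|x_k\|_{E(\cM,\tau)}$. Second, for each $k$ apply the self-adjoint splitting lemma at appropriately chosen spectral thresholds to obtain a preliminary decomposition $x_k=u_k+v_k+r_k$ in which $\mu(u_k)$ agrees up to controlled error with the bulk part of $\mu(x_k)$ dictated by $f$, $v_k$ is carried on a spectral projection of $x_k$ of uniformly bounded trace, and $\|r_k\|_{E(\cM,\tau)}$ is small. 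Third, pass to a further subsequence and exploit the atomlessness of $\cM$ to conjugate each $u_k$ inside $\cM$ so that $\{\mu(u_k)\}_k$ become exactly equimeasurable (condition (c)), absorbing the conjugation discrepancy into $r_k$; then inductively refine the spectral supports of the $v_k$ so that they are pairwise orthogonal (condition (d)), again absorbing the correction into $r_k$. Finally, setting $w_k:=r_k$, verify $w_k\to 0$ in $E(\cM,\tau)$ by a dominated-convergence argument that leans on the separability of $E(0,\infty)$ (which forces order continuity of the norm).

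The principal obstacle lies in the third step: ensuring that the two adjustments needed to upgrade approximate equimeasurability of $\{u_k\}$ and nearly-disjoint supports of $\{v_k\}$ to the exact conditions (c) and (d) can be performed simultaneously while keeping the accumulated error in $E(\cM,\tau)$-norm arbitrarily small. These adjustments interact and must be carried out inductively along a nested sequence of subsequences, using separability of $E(0,\infty)$ to control the accumulated error via dominated convergence and atomlessness of $\cM$ to provide the unitaries needed to transport spectral supports. For the weak-null addendum, note that if $\{x_k\}$ is weakly null then $w_k\to 0$ in norm, while a standard argument shows that a bounded pairwise-orthogonal sequence in a separable symmetric space with the Fatou property is weakly null along a further subsequence; hence $v_k\to 0$ weakly and consequently $u_k=x_k'-v_k-w_k\to 0$ weakly as well.
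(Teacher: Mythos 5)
Your overall strategy is exactly the one the paper takes: its entire ``proof'' of Theorem \ref{subsequence splitting property} is the remark preceding the statement, namely that one retraces \cite[Proposition 2.7]{DDS07} with \cite[Lemma 2.4]{DDS07} replaced by the self-adjoint splitting lemma \cite[Lemma 7.1]{DPS16} (equivalently \cite[Theorem 2.5]{CDS} or \cite[Lemma 7.2]{HNPS}), together with the observation that the Fatou property lifts from $E(0,\infty)$ to $E(\cM,\tau)$. Your four-step outline of that retraced argument (Helly selection plus Fatou, spectral splitting, conjugation by unitaries using atomlessness to force exact equimeasurability and exact orthogonality, and order continuity from separability to kill the remainder) is consistent with how that proof goes, so for the main assertion I have nothing to object to.

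There is, however, one genuine flaw in your treatment of the weak-null addendum. You assert that ``a bounded pairwise-orthogonal sequence in a separable symmetric space with the Fatou property is weakly null along a further subsequence.'' This is false in general: take $E=L_1(0,\infty)$, which is separable and has the Fatou property, and $v_k=\chi_{[k,k+1]}$; then $\int v_k\,dm=1$ for every $k$, so no subsequence is weakly null. Since the theorem is stated for arbitrary separable $E(0,\infty)$ with the Fatou property, your deduction ``$v_k\to 0$ weakly, hence $u_k=x_k'-v_k-w_k\to 0$ weakly'' does not go through as written. The correct route (the one in \cite{DDS07}) exploits the structure of the decomposition rather than a general fact about disjoint sequences: the equimeasurable sequence $\{u_k\}$ is relatively weakly compact (for instance, in the $L_1$ case equimeasurability gives uniform integrability), so one first extracts a weakly convergent subsequence of $\{u_k\}$ and then adjusts the decomposition so that both $\{u_k\}$ and $\{v_k\}$ become weakly null. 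You should replace your last paragraph by an argument of this kind.
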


 Recall  that two Banach spaces $X$ and $Y$ are said to be isomorphic (denoted by $X\approx Y$) if there exists an invertible bounded linear  operator from $X$ onto $Y$. Otherwise, we write $X\not \approx   Y$.
%We denote  $X\approx_C Y$ if $\frac1C \norm{\cdot}_Y\le \norm{\cdot}_X\le C\norm{\cdot}_Y$.
We say that $Y$
embeds into
$X$
isomorphically if there is a linear subspace
$Z\subset X$,
such that $Z$
is a Banach space and $Z$
is isomorphic to $Y$.
If   $X$ is isomorphic to a subspace (resp. a complemented subspace) of  $Y$,
we write $X\hookrightarrow Y$ (resp. $X\xhookrightarrow{c } Y$).
If $X$ does not embed into $Y$
isomorphically as a subspace  (resp. a complemented subspace), we write $X\not \hookrightarrow Y$ (resp. $X\not \xhookrightarrow{c} Y$).
We  write  $A \lesssim B$ if $A\le  c B$ for some universal constant $c$, and $A\approx  B$ if $A\lesssim B$ and $B\lesssim A$ ($A\approx_c  B$ if $\frac1c B\le A\le c B$).

If two sequences $\{a_k\}$ and $\{b_k\}$
of real positive numbers satisfy the condition
$$
0 < \inf_k \frac{a_k}{b_k} \le \sup_k \frac{a_k}{b_k} <\infty,
$$
we then write $\{a_k\}\sim \{b_k\}$.
We use the same notation to denote the equivalence between basic sequences and this should not cause any confusion.
A sequence $\left\{x_k\right\}_X$
in a Banach space $X$
is called semi-normalized,
if there are scalars
$0<a<b<\infty$
such that $a\le \norm{x_n }\le b$,
$n \ge 1$.
We denote the unit vector basis of $\ell_q$ and
$\ell_{p,q}$
by
$\{e^{\ell_q}_k \}_{k=1}^\infty $
and $\{e_k^{\ell_{p,q}}\}_{k=1}^\infty$, respectively.

\section{Auxiliary results}

\subsection{Estimates for the case when  $p=2,$ $q<2$}
The goal of this subsection of to establish noncommutative versions of results in \cite[Section 3]{SS}.
Lemma \ref{corrected semenov log} below  is a generalization of  the claim made in  \cite[Lemma 3]{SS}. 
Even though Lemma \ref{corrected semenov log} is similar to \cite[Lemma 3]{SS}, \cite[Lemma 3]{SS} is not sufficient for our purpose. 
\begin{lem}\label{semenov log} Let $0 < \alpha\le \infty$.
If $0\le x\in L_{2,1}(0,\alpha) \cap L_\infty (0,\alpha )$ and $\inf_{t\in (0,1)}x(t) >0 $, then
\begin{align}\label{eq:semenov log}
\left\| x^{\frac12}\right\|_{L_{2,1}(0,\alpha)}\leq c_{abs}\left\|x\right\|_{L_1(0,\alpha) }^{\frac12}(1+ \left\|\log(x)\right\|_{L_\infty(0,\alpha) })^{\frac12},
\end{align}
where $c_{abc}>0$ is a constant not depending on $x$.    
\end{lem}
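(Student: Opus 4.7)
The plan is to reduce this to a one-variable scalar integral inequality for the decreasing rearrangement. Since $x\ge 0$, the function $t\mapsto t^{1/2}$ is increasing, so $\mu(s;x^{1/2})=\mu(s;x)^{1/2}$, and the definition of the $L_{2,1}$-norm yields
\[
\left\|x^{1/2}\right\|_{L_{2,1}(0,\alpha)} \;=\; \frac{1}{2}\int_0^\alpha \mu(s;x)^{1/2}\,s^{-1/2}\,ds.
\]
We may assume $\left\|\log(x)\right\|_{L_\infty(0,\alpha)}$ is finite, for otherwise the right-hand side of \eqref{eq:semenov log} is infinite. Write $M:=\left\|x\right\|_{L_\infty(0,\alpha)}$ and let $m$ denote the essential infimum of $x$ on $(0,\alpha)$. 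Then $M\le e^{\|\log x\|_\infty}$ and $m\ge e^{-\|\log x\|_\infty}$, so $\log(M/m)\le 2\|\log x\|_\infty$. Moreover, from $\mu(\cdot;x)\ge m$ on $(0,\alpha)$ one has $\alpha m\le \|x\|_{L_1}$, i.e.\ $\alpha\le \|x\|_{L_1}/m$.

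The key step is to split the integral at the scale $a:=\|x\|_{L_1}/M$; note $a\le \alpha$ since $\alpha M\ge \|x\|_{L_1}$. On the near piece $(0,a)$, use the pointwise bound $\mu(s;x)\le M$ to get
\[
\int_0^a \mu(s;x)^{1/2}\,s^{-1/2}\,ds \;\le\; M^{1/2}\cdot 2a^{1/2} \;=\; 2\,\|x\|_{L_1}^{1/2}.
\]
On the far piece $(a,\alpha)$, Cauchy--Schwarz with the split $\mu^{1/2}s^{-1/2}=\mu^{1/2}\cdot s^{-1/2}$ gives
\[
\int_a^\alpha \mu(s;x)^{1/2}\,s^{-1/2}\,ds \;\le\; \Bigl(\int_a^\alpha \mu(s;x)\,ds\Bigr)^{1/2}\Bigl(\int_a^\alpha s^{-1}\,ds\Bigr)^{1/2} \;\le\; \|x\|_{L_1}^{1/2}\,\bigl(\log(\alpha/a)\bigr)^{1/2}.
\]
Since $\alpha/a=\alpha M/\|x\|_{L_1}\le M/m$, the logarithm is bounded by $2\|\log x\|_\infty$, and summing the two pieces yields \eqref{eq:semenov log} with an explicit absolute constant.

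The argument is elementary; no Khinchine-type machinery is needed at this stage. The only mildly delicate point is the choice of the splitting scale $a=\|x\|_{L_1}/M$, which is forced by requiring the $L_\infty$-bound on $(0,a)$ to produce exactly the factor $\|x\|_{L_1}^{1/2}$, while also keeping the $\log(\alpha/a)$ term controlled by $\log(M/m)$, and hence by $\|\log x\|_\infty$. The hypothesis $\inf_{t\in(0,1)}x(t)>0$ plays no essential role beyond ruling out $x\equiv 0$; the substantive quantitative input is the finiteness of $\|\log x\|_{L_\infty(0,\alpha)}$.
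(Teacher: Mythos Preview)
Your proof is correct and takes a genuinely different (and cleaner) route than the paper's. The paper proceeds via a dyadic level-set decomposition: it approximates $x$ by $y=\sum_{l} 2^{-l}\chi_{\{2^{-l}<x\le 2^{1-l}\}}$, computes $\|y^{1/2}\|_{L_{2,1}}$ as a finite sum over the levels $l_-\le l\le l_+$, and applies the discrete Cauchy--Schwarz inequality to that sum; the factor $(l_+-l_-+1)^{1/2}$ is then bounded in terms of $\|\log x\|_\infty$. You bypass the dyadic machinery entirely by working directly with the integral $\int_0^\alpha \mu(s;x)^{1/2}s^{-1/2}\,ds$, splitting at the single scale $a=\|x\|_{L_1}/M$, and applying the integral form of Cauchy--Schwarz on the tail. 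Your argument is shorter, yields an explicit constant, and makes transparent that the hypothesis $\inf_{t\in(0,1)}x(t)>0$ is not really needed: the substantive input is the finiteness of $\|\log x\|_{L_\infty(0,\alpha)}$, without which the claimed inequality is vacuous. The paper's dyadic approach has the advantage of mirroring the structure of the original argument in \cite[Lemma~3]{SS} that it generalizes, but your direct method is more economical here.
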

\begin{proof} 
Set
$$y=\sum_{l\in\mathbb{Z}}2^{-l}\chi_{\{ 2^{-l}< x\le 2^{1-l}\}}. $$
There exist integers $l_+$ and  $l_-$ with $l_-\le l_+$ such that 
$$y=\sum_{l=l_-}^{l_+}2^{-l}\chi_{\{ 2^{-l}< x\le 2^{1-l}\}}  $$
and the measurable sets
$$ \{ 2^{-l_-}< x\le 2^{1-l_-}\}, \{ 2^{-l_+}< x\le 2^{1-l_+}\}   $$
are  not empty. 
In particular, we have 
$$\mu(y)=\sum_{l=l_-}^{l_+}2^{-l}\chi_{[d_x(2^{1-l}),d_x(2^{-l}))},$$
Since   $\alpha^{\frac12}-\beta^{\frac12}\leq(\alpha-\beta)^{\frac12}$ for any  $0<\beta<\alpha$ and 
$$\mu(y^{1/2})=\sum_{l=l_-}^{l_+}2^{-l/2}\chi_{(d_x(2^{1-l}),d_x(2^{-l}))},$$
it follows from the definition of $L_{p,q}$-norms that 
\begin{align*}
\left\| y^{\frac12}\right\|_{L_{2,1}(0,\alpha)}& 
=\sum_{l=l_-}^{l_+}2^{-\frac{l}2}\left(d_x(2^{-l})^{\frac12}-d_x(2^{1-l})  ^{\frac12}\right ) \leq \sum_{l=l_-}^{l_+}2^{-\frac{l}2}\left( d_x(2^{-l})-d_x(2^{1-l})\right )^{\frac12}.
\end{align*}
By    Cauchy's inequality, for any positive numbers $a_1,\cdots ,a_N$, we have 
$$\sum_{k=0}^Na_k^{\frac12}\leq(N+1)^{\frac12}\left( \sum_{k=0}^Na_k\right)^{\frac12},$$
and therefore, 
\begin{align}\label{esy}
\left\|y^{\frac12}\right\|_{L_{2,1}(0,\alpha)}\leq(l_+-l_-+1)^{\frac12}\left(
\sum_{l=l_-}^{l_+}2^{-l}\left(d_x(2^{-l})-d_x(2^{1-l})\right)\right)^{\frac12}.
\end{align}
Observe that
$$\left\|y\right\|_{L_1(0,\alpha)} =\sum_{l=l_-}^{l_+}2^{-l}(d_x(2^{-l})-d_x(2^{1-l})).$$
Since 
$$ 2^{-1}\inf_{t\in (0,1)}x(t) \le 2^{-l_+} \le 2^{- l_-} \le \norm{x}_{L_\infty(0,\alpha)} ,   $$
it follows that 
$$  \log (2^{-1}\inf_{t\in (0,1)}x(t) )  \le   -l_+ \cdot \log 2  \le   -l_- \cdot  \log 2  \le   \log (\norm{x}_{L_\infty(0,\alpha)} ) .   $$
That is,  \begin{align}\label{l-+}
 \frac{1}{\log 2}   \log (2^{-1}\inf_{t\in (0,1)}x(t) )  \le   -l_+    \le   -l_-   \le \frac{1}{\log 2}     \log (\norm{x}_{L_\infty(0,\alpha)} ) .
 \end{align}
For any $t\in(0,\alpha)$, we have 
$$ \inf_{t\in (0,1)}x(t)\le x(t) \le \norm{x}_\infty ,  $$
and therefore, 
$$ \log (\inf_{t\in (0,1)}x(t)) = \inf_{(0,1)} \log( x(t)) \le \sup_{(0,1)}\log( x(t)) = \log \norm{x}_\infty. $$
Hence, $\norm{\log x   }_\infty =  \max \left\{ \left| \log (\inf_{t\in (0,1)}x(t))\right| , \left| \log \norm{x}_{L_\infty(0,\alpha)} \right| \right \}$. 
We have
\begin{align}\label{es+-}
|l_+|,|l_-|&\stackrel{\eqref{l-+}}{\le} \max\left\{ \left|\frac{1}{\log 2}   \log (2^{-1}\inf_{t\in (0,1)}x(t) )\right|, \left|\frac{1}{\log 2}   \log (\norm{x}_{L_\infty(0,\alpha)} )\right| \right\}\nonumber \\
&~\le \frac{1}{\log 2}   \norm{\log x}_{L_\infty(0,\alpha)}  .  
\end{align}
On the other hand, we have 
\begin{align}\label{x12y}
\left\|x^{\frac12}\right\|_{L_1(0,\alpha)}\leq 2^{\frac12}\left \|y^{\frac12}\right\|_{L_1(0,\alpha)} ~ \mbox{ and}\quad \left\|y\right\|_{L_1(0,\alpha)}\leq \left\|x\right\|_{L_1(0,\alpha)} .
\end{align}
Combining above inequalities, we have 
\begin{align*}
\left\| x^{\frac12}\right\|_{L_{2,1}(0,\alpha)}&\stackrel{\eqref{x12y}}{\leq} 2^{\frac12}\left \|y^{\frac12}\right\|_{ L_{2,1}(0,\alpha)}\\
&  \stackrel{\eqref{esy}}{\le} 
2^{\frac12} (l_+-l_-+1)^{\frac12}\left(
\sum_{l=l_-}^{l_+}2^{-l}\left(d_x(2^{-l})-d_x(2^{1-l})\right)\right)^{\frac12}  \\
&\stackrel{\eqref{es+-}}{\le} 2^{\frac12}  \left(   \frac{2}{\log 2}   \norm{\log x}_{L_\infty(0,\alpha )}+1\right) \norm{y}_{L_1(0,\alpha)}^{\frac12}\\
&\stackrel{\eqref{x12y}}{\leq} 2^{\frac12}  \left(   \frac{2}{\log 2}  \norm{\log x}_{L_\infty(0,\alpha )} +1\right) \norm{x}_{L_1(0,\alpha)}^{\frac12}. 
\end{align*}
This completes the proof of \eqref{eq:semenov log}.
\end{proof}
Denote by $s(x)$ the characteristic function of the support of a function $x$. 
\begin{lem}\label{corrected semenov log}
Let  $0\leq x\in L_{\infty}(0,1)$ be such that $\left\|x\right\|_{L_\infty(0,1)}\leq n,$ $\left\|s(x)\right\|_{L_1(0,1)}\leq (n+1)t$ and $\left\|x\right\|_{L_1(0,1)}\leq t\cdot \log(en)$, $0< t<\infty$. 
We have   $$\left\|x^{\frac12}\right\|_{L_{2,1}(0,1 )}\leq c_{abs}t^{\frac12}\log(en),$$
where $c_{abc}>0$ is a constant not depending on $x$.   
\end{lem}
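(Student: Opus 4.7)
The plan is to split $x$ at the threshold $1/n$ and estimate the two pieces separately, since Lemma \ref{semenov log} cannot be applied to $x$ directly on $(0,1)$: the vanishing of $x$ on part of $(0,1)$ means $\inf x = 0$ on $(0,1)$, so the hypothesis of Lemma \ref{semenov log} fails. So I would write $x = y + z$ with
\[
y = x \cdot \chi_{\{0 < x \le 1/n\}}, \qquad z = x \cdot \chi_{\{x > 1/n\}},
\]
which are disjointly supported, hence $x^{1/2} = y^{1/2} + z^{1/2}$. Since $q = 1 \le p = 2$, $L_{2,1}(0,1)$ is a genuine norm, and it suffices to bound each summand by $c_{abs}\, t^{1/2}\log(en)$.

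For the small part, the estimate is essentially trivial: $y^{1/2} \le n^{-1/2}$ pointwise, and $y^{1/2}$ is supported on a set of measure at most $\|s(x)\|_{L_1(0,1)} \le (n+1)t$. Hence $\mu(s; y^{1/2}) \le n^{-1/2}\chi_{[0,(n+1)t]}(s)$, and a direct computation in the definition of the $L_{2,1}$-quasinorm yields
\[
\|y^{1/2}\|_{L_{2,1}(0,1)} \le n^{-1/2}\bigl((n+1)t\bigr)^{1/2} \le \sqrt{2}\,t^{1/2},
\]
which is comfortably dominated by $t^{1/2}\log(en)$.

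For the large part, set $\alpha = m(\{x > 1/n\}) \le (n+1)t$. The key move is to pass to the decreasing rearrangement: since $\mu(z^{1/2}) = \mu(z)^{1/2}$ is supported on $[0,\alpha]$ and $L_{2,1}$-norms depend only on singular values,
\[
\|z^{1/2}\|_{L_{2,1}(0,1)} = \|\mu(z)^{1/2}\|_{L_{2,1}(0,\alpha)}.
\]
Regarded as a function on $(0,\alpha)$, $\mu(z)$ satisfies $1/n \le \mu(z) \le n$, hence $\|\log \mu(z)\|_{L_\infty(0,\alpha)} \le \log n$, while $\|\mu(z)\|_{L_1(0,\alpha)} = \|z\|_{L_1(0,1)} \le \|x\|_{L_1(0,1)} \le t\log(en)$. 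All the hypotheses of Lemma \ref{semenov log} are now met on $(0,\alpha)$, so that lemma yields
\[
\|\mu(z)^{1/2}\|_{L_{2,1}(0,\alpha)} \le c_{abs}\bigl(t\log(en)\bigr)^{1/2}\bigl(1 + \log n\bigr)^{1/2} = c_{abs}\,t^{1/2}\log(en),
\]
using $1 + \log n = \log(en)$. Adding the two bounds gives the conclusion.

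The only subtle point I expect is the one already flagged: Lemma \ref{semenov log} requires the function to be bounded away from zero on the whole domain, which forces us to both truncate at $1/n$ and then rearrange onto the shorter interval $(0,\alpha)$ rather than work on $(0,1)$. Once that reduction is made, the two hypotheses $\|s(x)\|_1 \le (n+1)t$ and $\|x\|_1 \le t\log(en)$ split cleanly between the small and large parts, and no further difficulty arises.
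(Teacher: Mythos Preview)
Your proof is correct and follows essentially the same approach as the paper: split $x$ at the threshold $1/n$, bound the small part directly via the support estimate, and bound the large part by applying Lemma~\ref{semenov log} to its decreasing rearrangement (the paper uses the names $y$ and $z$ the other way around, but the argument is identical). Your explicit passage to the interval $(0,\alpha)$ before invoking Lemma~\ref{semenov log} is in fact a slightly cleaner way to verify its hypothesis than the paper's phrasing.
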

\begin{proof} Set $y:=x\chi_{(\frac1n,\infty)}(x)$ and $z:=x\chi_{[0,\frac1n]}(x).$
Since $L_{2,1}(0,1 )$ is a Banach space, it follows from the triangular inequality that  
\begin{equation}\label{cseq1}
\left\| x^{\frac12} \right\|_{L_{2,1}(  0,1 )}\leq \left\|y^{\frac12}\right\|_{L_{2,1}(0,1)}+\left\|z^{\frac12}\right\|_{L_{2,1}(0,1)}.
\end{equation}

 It follows from Lemma \ref{semenov log} that
$$\left\|y^{\frac12}\right\|_{L_{2,1}(0,1)}=\left\|\mu(y)^{\frac12}\right\|_{L_{2,1}(0,1)}\leq c_{abs}\left\|\mu(y)\right\|_{L_1(0,1)}^{\frac12}(1+\left\|\log(\mu(y))\right\|_{L_{\infty}(0,1)})^{\frac12}.$$
Observe that 
$$\left\|\mu(y)\right\|_{L_1(0,1)}\leq \left\|x\right\|_{L_1(0,1)} \leq\log(en)\cdot t$$
and,  by $\frac1n \le  y\le n$, we have 
$$\left\|\log(\mu(y))\right\|_{L_{\infty}(0,1)}\leq \log(n).$$
We obtain that 
\begin{equation}\label{cseq2}
\left\|y^{\frac12}\right \|_{L_{2,1}(0,1)}\leq c_{abs}    \cdot \log^{1/2}(en)\cdot t^{\frac12}
\left( 1+\log n\right)^{1/2} \le c_{abs}
t^{\frac12}\log(en).
\end{equation}
On the other hand, we have 
\begin{align*}
\left\|z^{\frac12}\right\|_{L_{2,1}(0,1)}= \left\|\frac{1}{\sqrt{n}} s(z)\right\|_{L_{2,1}(0,1)}\le 
 n^{-\frac12}\left\|s(x)\right\|_{L_{2,1}(0,1)}& =  n^{-\frac12}  \int_{0}^{ } 1    d t^{1/2} \\
 &=n^{-\frac12}\cdot m(s(x)) ^{\frac12}. 
 \end{align*} 
By $m(s(x))= \left\|s(x)\right\|_{L_1(0,1 )} \leq (n+1)t$, we have
\begin{equation}\label{cseq3}
\left\|z^{\frac12}\right\|_{L_{2,1}(0,1 )}\leq n^{-\frac12}\cdot (n+1)^{\frac12}t^{\frac12}\leq 2t^{\frac12}.
\end{equation}
Combining \eqref{cseq1}, \eqref{cseq2} and \eqref{cseq3}, 
we have 
\begin{align*}
\norm{x^{1/2}}_{L_{2,1}(0,1 )}\le c_{abs}
t^{\frac12}\log(en) +2t^{\frac12}  &\le c_{abs}
t^{\frac12}\log(en) +2t^{\frac12}\log(en) \\
& =    (c_{abs}+2) t^{\frac12} \cdot \log(en). 
\end{align*}
 This completes the proof. 
\end{proof}

Let $\cM$ be an atomless von Neumann algebra equipped with a faithful normal trace $\tau$. 
Recall that there exist  $*$-homomorphisms from $S(0,1)$ into $S(\cM,\tau)$ which preserve  singular value functions, see e.g. \cite[Lemma 1.3]{CKS92} or \cite[Lemma 4.1]{CS94} (see
also \cite[Proposition 3.3]{PS07}). 
The following result is a noncommutative version of Lemma 6 in \cite{SS}. 
\begin{lem}\label{an bound noncomm} 
Let $\cM$ be an atomless von Neumann algebra equipped with a faithful normal tracial state $\tau$. 
For every $k\geq0,$ let $i_k:S(0,1)\to S(\mathcal{M},\tau)$ be a $*$-homomorphism which preserves singular value functions. Define operators $A_n$ from $L_{2,1}(0,1)$ to $L_{2,1}(L_{\infty}(0,1)\otimes\mathcal{M},dm\otimes\tau)$ by setting
\begin{align}\label{estiAn}
A_nx:=\sum_{k=0}^n(k+1)^{-\frac12}r_k\otimes i_k(x),\quad n\geq2.
\end{align}
We have $$\left\|A_n\right \|_{L_{2,1}(0,1)\to L_{2,1}(L_{\infty}(0,1)\otimes\mathcal{M},dm\otimes\tau) }\leq c_{abs}\log(e n),~n\geq2,$$
where $c_{abs}$ is a constant not depending on $n$. 
\end{lem}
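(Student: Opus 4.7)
The plan is to establish the operator-norm bound first for characteristic functions $\chi_E$ and then extend to arbitrary $x \in L_{2,1}(0,1)$ via the layer cake decomposition combined with Minkowski's integral inequality. Since $L_{2,1}$ is a Banach space (as $q = 1 \le p = 2$), the Minkowski integral inequality applies. After splitting $x$ into its real/imaginary and positive/negative parts (each of $L_{2,1}$-norm at most $\|x\|_{L_{2,1}}$), one may assume $x \ge 0$; the layer cake formula $x = \int_0^\infty \chi_{\{x > s\}}\, ds$ combined with Minkowski reduces the problem to proving
\[
\|A_n \chi_E\|_{L_{2,1}(L_\infty(0,1) \overline{\otimes} \cM)} \lesssim \log(en)\, m(E)^{1/2}
\]
for every measurable $E \subseteq (0,1)$; the layer-cake identity $\|x\|_{L_{2,1}(0,1)} = \int_0^\infty m(\{x > s\})^{1/2}\, ds$ then yields the general conclusion.

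For $\chi_E$, I would apply the noncommutative Lust--Piquard inequality (Theorem \ref{lust-piquard}) to the self-adjoint operators $(k+1)^{-1/2} p_k$ with $p_k := i_k(\chi_E)$. Each $p_k$ is a projection of trace $m(E) =: e$ because $i_k$ is a singular-value preserving $*$-homomorphism and $\chi_E$ is itself a projection. Lust--Piquard yields
\[
\|A_n \chi_E\|_{L_{2,1}(L_\infty(0,1) \overline{\otimes} \cM)} \lesssim \|Y^{1/2}\|_{L_{2,1}(\cM,\tau)} = \|\mu(Y)^{1/2}\|_{L_{2,1}(0,1)}, \quad Y := \sum_{k=0}^n (k+1)^{-1} p_k.
\]
Setting $z := \mu(Y)$, elementary computations give
\[
\|z\|_{L_\infty(0,1)} \le c_n, \quad \|s(z)\|_{L_1(0,1)} = \tau(s(Y)) \le (n+1) e, \quad \|z\|_{L_1(0,1)} = \tau(Y) = c_n e,
\]
where $c_n := \sum_{k=0}^n (k+1)^{-1}$ satisfies $c_n \le C_0 \log(en)$ (and $c_n \le n$ for $n \ge 2$), and the support bound uses $s(Y) \le \bigvee_k p_k$ together with subadditivity of $\tau$ on projections. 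Choosing $t := Ce$ in Lemma \ref{corrected semenov log} for a sufficiently large absolute constant $C$ satisfies the support hypothesis $\|s(z)\|_{L_1} \le (n+1)t$ together with the integrability hypothesis $\|z\|_{L_1} \le t \log(en)$ (the latter precisely because $c_n \lesssim \log(en)$). The lemma then delivers $\|z^{1/2}\|_{L_{2,1}(0,1)} \lesssim \sqrt{e}\, \log(en) = \log(en)\, \|\chi_E\|_{L_{2,1}(0,1)}$, closing the characteristic-function case.

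The main obstacle is the delicate balancing of the three parameters in Lemma \ref{corrected semenov log}: the choice $t$ of order $e$ is forced by the support constraint on $\mu(Y)$, and is consistent with the $L_1$ constraint only because the harmonic sum $c_n$ grows logarithmically in $n$. In particular, the identification $N = n$ in the lemma (the choice that makes the conclusion $\log(eN)$ match the target $\log(en)$) is consistent precisely because $\|Y\|_\infty \le c_n \le n$ whenever $n \ge 2$. Any attempt to bypass Lust--Piquard and bound $\|Y^{1/2}\|_{L_{2,1}}$ directly from submajorization $\mu(Y) \prec\prec c_n\mu(\chi_E)$ loses an extra factor of $\sqrt{\log n}$; it is the finer information encoded in the triple $(\|z\|_\infty, \|s(z)\|_{L_1}, \|z\|_{L_1})$ fed into Lemma \ref{corrected semenov log} that recovers the sharp $\log(en)$ bound.
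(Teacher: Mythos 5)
Your proposal is correct and follows essentially the same route as the paper: reduce to characteristic functions, apply the noncommutative Lust--Piquard inequality (Theorem \ref{lust-piquard}) to $X=\sum_{k=0}^n(k+1)^{-1}i_k(\chi_E)$, verify the three bounds $\|X\|_\infty\lesssim n$, $\tau(s(X))\le (n+1)m(E)$, $\tau(X)\lesssim m(E)\log(en)$, and feed them into Lemma \ref{corrected semenov log}. The only difference is cosmetic: where the paper passes from characteristic functions to general $x\in L_{2,1}(0,1)$ by citing Lemma II.5.2 of Krein--Petunin--Semenov, you spell out the same extension via the layer-cake identity $\|x\|_{L_{2,1}}=\int_0^\infty m(\{|x|>s\})^{1/2}\,ds$ and Minkowski's integral inequality, and your explicit choice $t=Ce$ handles the constant-factor slack in the hypotheses of Lemma \ref{corrected semenov log} slightly more carefully than the paper does.
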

\begin{proof} Let $E\subset(0,1)$ be a measurable subset and let $t:=m(E).$
Let 
$$X:=\sum_{k=0}^n\frac1{k+1}i_k(\chi_E)\in L_{2,1}(\mathcal{M},\tau) .$$ By Theorem \ref{lust-piquard} above,  we have
\begin{align*}
\left\|A_n(\chi_E)\right \|_{L_{2,1}(L_{\infty}(0,1)\otimes\mathcal{M},dm\otimes\tau)}&=\norm{\sum_{k=0}^n(k+1)^{-\frac12}r_k\otimes i_k(\chi_{E})}_{L_{2,1}(L_{\infty}(0,1)\otimes\mathcal{M},dm\otimes\tau)}\\
&\le  c_{abs}\left\|  \left( \sum_{k=0}^n\frac1{k+1}i_k(\chi_E) \right)^{1/2}\right\|_{L_{2,1}( \mathcal{M}, \tau)} \\
&= c_{abs}\left\|X^{\frac12}\right\|_{L_{2,1}( \mathcal{M}, \tau)} .
\end{align*}
By the triangular inequality, we have
$$\left\|X\right\|_{L_1(\cM,\tau)} \leq\sum_{k=0}^n\frac{t}{k+1}\leq 2t\log(en),\quad \left\|X\right\|_{L_\infty(\cM,\tau)}\leq \sum_{k=0}^n\frac1{k+1}\leq 2 n  $$
and 
$$\left\|s(X)\right\|_{L_1(\cM,\tau)}\stackrel{\mbox
{\tiny \cite[Lemma 4.1.4]{DPS}}}{\leq}\sum_{k=0}^{n }\left\|s\left (\frac1{k+1}i_k(\chi_E) \right )\right\|_{L_1(\cM,\tau)}=(n+1)t,$$
where $s(X)$ denotes the support of the self-adjoint operator $X$. 
By Lemma \ref{corrected semenov log}, there exists a constant $c_{abs}$ such that 
$$\left\|X^{\frac12}\right\|_{L_{2,1}(\cM,\tau)}=\left\|\mu(X)^{\frac12}\right\|_{L_{2,1}(0,1)}\leq c_{asb}\cdot t^{\frac12}\cdot \log(en).$$
It follows now that
$$\left\|A_n(\chi_E)\right \|_{L_{2,1}(L_{\infty}(0,1)\otimes\mathcal{M},dm\otimes\tau)} \leq c_{abs}t^{\frac12}\log(en)=c_{abs}\left\|\chi_E\right\|_{L_{2,1}(0,1)}  \log(e n).$$
The assertion follows now from Lemma II.5.2 in \cite{KPS} (see also \cite[Lemma 5]{SS} for a similar result).
\end{proof}
Below, we obtain the estimate of $A_n$ for $L_{2,q}$. 
\begin{lem}\label{an l2q noncomm} Let the operators $A_n,$ $n\geq2,$ be as in Lemma \ref{an bound noncomm}. If $1\leq q\leq 2,$ then
$$\left\|A_n \right\|_{L_{2,q}(0,1)\to L_{2,q}(L_{\infty}(0,1)\otimes\mathcal{M},dm\otimes\tau) }\leq c_{abs}\log^{\frac1q}(en),\quad n\geq2.$$
\end{lem}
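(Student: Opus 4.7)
\medskip

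\noindent\textbf{Proof proposal.} The plan is to interpolate between the endpoint estimates $q=1$ (already established in Lemma~\ref{an bound noncomm}) and $q=2$ (which I will obtain directly from orthogonality of the Rademachers).

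\emph{Step 1 (the $q=2$ endpoint).} Since $L_{2,2}=L_2$ coincides with the Hilbert space attached to the trace, I would compute
\begin{align*}
\|A_n x\|_{L_2(L_\infty(0,1)\overline{\otimes}\cM)}^2
&=\int_0^1 \tau\bigl(|\textstyle\sum_{k=0}^n (k+1)^{-1/2} r_k(t)\, i_k(x)|^2\bigr)\,dt\\
&=\sum_{k,l=0}^{n}\frac{1}{\sqrt{(k+1)(l+1)}}\,\tau\bigl(i_k(x)^*i_l(x)\bigr)\int_0^1 r_k(t)r_l(t)\,dt.
\end{align*}
Orthogonality of the Rademachers kills the off-diagonal terms, and the fact that each $i_k$ is a $*$-homomorphism preserving the singular value function gives $\|i_k(x)\|_{L_2(\cM,\tau)}=\|x\|_{L_2(0,1)}$. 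Hence
\[
\|A_n x\|_{L_2}^{2}=\Bigl(\sum_{k=0}^{n}\tfrac{1}{k+1}\Bigr)\|x\|_{L_2(0,1)}^2\le 2\log(en)\,\|x\|_{L_2(0,1)}^{2},
\]
so $\|A_n\|_{L_{2,2}\to L_{2,2}}\le \sqrt{2\log(en)}$.

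\emph{Step 2 (interpolation).} The real interpolation identity
\[
(L_{2,1},L_{2,2})_{\theta,q}=L_{2,q},\qquad \tfrac{1}{q}=1-\tfrac{\theta}{2},
\]
holds both for the commutative scale on $(0,1)$ and for the noncommutative scale on $L_\infty(0,1)\overline{\otimes}\cM$ (this is a classical feature of Lorentz spaces, valid in the noncommutative setting in view of e.g.\ \cite{DPS}). Taking $\theta=2(1-1/q)\in[0,1]$ for $q\in[1,2]$, and applying the general real interpolation norm inequality to the operator $A_n$ with endpoint norms $M_0\lesssim\log(en)$ (Lemma~\ref{an bound noncomm}) and $M_1\lesssim\log^{1/2}(en)$ (Step~1), I obtain
\[
\|A_n\|_{L_{2,q}\to L_{2,q}}\lesssim M_0^{1-\theta}M_1^{\theta}\lesssim \log^{(1-\theta)+\theta/2}(en)=\log^{1-\theta/2}(en)=\log^{1/q}(en),
\]
which is the claimed bound.

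\emph{Anticipated obstacles.} The only delicate point is invoking the real interpolation formula for the \emph{noncommutative} target space $L_{2,q}(L_\infty(0,1)\overline{\otimes}\cM)$; once one knows that $L_{p,q}$ of a semifinite von Neumann algebra is obtained from $L_{p,q_0},L_{p,q_1}$ by the $K$-method with the correct parameters, the rest is mechanical. The endpoint calculations are routine: orthogonality at $q=2$ and the already-proved Lemma~\ref{an bound noncomm} at $q=1$. I would expect the write-up to take only a few lines once the interpolation theorem is cited.
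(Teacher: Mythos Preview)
Your proposal is correct and follows essentially the same route as the paper: both interpolate the real couple $(L_{2,1},L_2)$ to reach $L_{2,q}$, using Lemma~\ref{an bound noncomm} at $q=1$ and the direct orthogonality computation $\|A_n\|_{L_2\to L_2}=\bigl(\sum_{k=0}^n\frac{1}{k+1}\bigr)^{1/2}\le\log^{1/2}(en)$ at $q=2$, with the same interpolation parameter $\theta=\frac{2q-2}{q}$. The paper supplies the precise citations (\cite[Theorem~5.2.4]{BL}, \cite[Proposition~2.g.15]{LT2}, \cite[Theorem~7.3.6]{DPS}) for the noncommutative interpolation identity you flagged as the one delicate point.
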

\begin{proof} It follows from Theorem 5.2.4 in \cite{BL} that $[L_{2,1},L_2]_{\frac{2q-2}{q},q}=L_{2,q}$ (real method of interpolation). It follows from   \cite[Proposition 2.g.15]{LT2} and \cite[Theorem 7.3.6]{DPS} that
\begin{align*}
&\quad \left\|A_n\right\|_{L_{2,q}(0,1)\to L_{2,q}(L_{\infty}(0,1)\otimes\mathcal{M},dm\otimes\tau) }\\
&\leq \left\|A_n\right\|_{L_{2,1}(0,1)\to L_{2,1}(L_{\infty}(0,1)\otimes\mathcal{M},dm\otimes\tau) }^{\frac{2-q}{q}}\left\|A_n\right\|_{L_{2 }(0,1)\to L_{2 }(L_{\infty}(0,1)\otimes\mathcal{M},dm\otimes\tau) }^{\frac{2q-2}{q}}.
\end{align*}
Using Lemma \ref{an bound noncomm}  and the  inequality
$$\left\|A_n\right\|_{L_{2 }(0,1)\to L_{2 }(L_{\infty}(0,1)\otimes\mathcal{M},dm\otimes\tau) }=\Big(\sum_{k=0}^n\frac1{k+1}\Big)^{\frac12}\leq \left( 1+\log n \right)^{1/2}=  \log^{\frac12}(e n),$$
we infer that
\begin{align*}\left\|A_n\right\|_{L_{2,1}(0,1)\to L_{2,1}(L_{\infty}(0,1)\otimes\mathcal{M},dm\otimes\tau) } &\leq \Big(c_{abs}\log(e n)\Big)\cdot\Big( \log^{\frac12}(e n)\Big)^{\frac{2q-2}{q}}\\
&=c_{abs}^{\frac{2-q}{q}} \left( \log(e n)\right) ^{\frac1q}.
\end{align*}
This completes the proof.
\end{proof}

We end this subsection with a noncommutative version of \cite[Lemma 8]{SS}. 
\begin{lem}\label{anx bound noncomm} Let the operators $A_n$ be as in Lemma \ref{an bound noncomm}. For every $x\in L_{2,q}(0,1),$ $1\leq q<2,$ we have
$$\left\|A_nx\right\|_{L_{2,q}(L_{\infty}(0,1)\otimes\mathcal{M},dm\otimes\tau)}=o(\log^{\frac1q}(n)),\quad n\to\infty.$$
\end{lem}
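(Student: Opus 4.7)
\smallskip
\noindent\emph{Proof proposal.} The plan is a two-step density argument that exploits the gap between $\log^{1/2}(en)$ and the target $\log^{1/q}(en)$, which is strictly positive precisely because $q<2$.

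First, I would establish a sharper bound for bounded $x$: namely, for $x\in L_\infty(0,1)$,
\begin{equation*}
\bigl\|A_n x\bigr\|_{L_{2,q}(L_\infty(0,1)\otimes \cM,\,dm\otimes\tau)} \;\lesssim\; \|x\|_\infty \log^{1/2}(en).
\end{equation*}
This would be obtained by applying Theorem~\ref{lust-piquard} to the self-adjoint operators $B_k := (k+1)^{-1/2} i_k(x)$, which gives
\begin{equation*}
\bigl\|A_n x\bigr\|_{L_{2,q}(L_\infty(0,1)\otimes \cM)} \;\leq\; c_{p,q}\,\Bigl\|\Bigl(\sum_{k=0}^n (k+1)^{-1} i_k(x)^2\Bigr)^{1/2}\Bigr\|_{L_{2,q}(\cM,\tau)}.
\end{equation*}
Since each $i_k$ is a $*$-homomorphism, $i_k(x)^2 = i_k(x^2)$, and the operator inside the square root has $L_\infty(\cM,\tau)$-norm at most $\|x\|_\infty^2 \sum_{k=0}^n (k+1)^{-1} \leq 2\|x\|_\infty^2 \log(en)$. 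Taking the square root and using the continuous embedding $L_\infty(\cM,\tau)\hookrightarrow L_{2,q}(\cM,\tau)$ valid for a tracial state yields the claim.

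Second, given $x\in L_{2,q}(0,1)$ and $\varepsilon>0$, I would use the density of $L_\infty(0,1)$ in $L_{2,q}(0,1)$ (valid since $q<\infty$) to split $x = x_1 + x_2$ with $x_1\in L_\infty(0,1)$ and $\|x_2\|_{L_{2,q}(0,1)} < \varepsilon$. Combining the bound from the first step applied to $x_1$ with Lemma~\ref{an l2q noncomm} applied to $x_2$ gives
\begin{equation*}
\bigl\|A_n x\bigr\|_{L_{2,q}(L_\infty(0,1)\otimes\cM)} \;\leq\; C\,\|x_1\|_\infty \log^{1/2}(en) + c_{abs}\,\varepsilon\,\log^{1/q}(en).
\end{equation*}
Dividing by $\log^{1/q}(en)$ and using $1/2 < 1/q$ (which is where the hypothesis $q<2$ enters crucially), the first term vanishes as $n\to\infty$, so
\begin{equation*}
\limsup_{n\to\infty} \frac{\bigl\|A_n x\bigr\|_{L_{2,q}(L_\infty(0,1)\otimes\cM)}}{\log^{1/q}(en)} \;\leq\; c_{abs}\,\varepsilon.
\end{equation*}
Letting $\varepsilon\to 0$ concludes the argument.

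I do not foresee a serious obstacle: the only routine check is that the embedding $L_\infty(\cM,\tau)\hookrightarrow L_{2,q}(\cM,\tau)$ has constant depending only on $(p,q)$ on a probability space, which follows from $\|\mathbf{1}\|_{L_{2,q}(\cM,\tau)}$ being an explicit function of $(p,q)$ alone. The essence of the argument is that the Khinchine inequality forces bounded functions to behave like $\log^{1/2}(en)$, leaving a factor $\log^{1/q-1/2}(en)\to\infty$ of slack that absorbs the $L_{2,q}$-approximation error.
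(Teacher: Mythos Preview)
Your proposal is correct and follows essentially the same route as the paper's proof: split $x$ into a bounded piece (handled via Theorem~\ref{lust-piquard} and the embedding $L_\infty\hookrightarrow L_{2,q}$ to get a $\log^{1/2}(en)$ bound) and a piece of small $L_{2,q}$-norm (handled via Lemma~\ref{an l2q noncomm} to get $\varepsilon\log^{1/q}(en)$), then exploit $q<2$. The paper uses the explicit truncation $x_2=\min\{x,\mu(t;x)\}$, $x_1=(x-\mu(t;x))_+$ rather than abstract density, and assumes $x\geq 0$ at the outset so that the self-adjointness hypothesis of Theorem~\ref{lust-piquard} is immediate; you should add a word reducing to real-valued $x$ for the same reason, but otherwise the arguments coincide.
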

\begin{proof} Without loss of generality, we may assume that 
$x\geq0.$ Fix $\epsilon>0.$ Since $L_{2,q}$ is separable, it follows that there exists $t>0$ with $\left\|\mu(x)\chi_{(0,t)}\right\|_{L_{2,q}(0,1)} \leq\epsilon$ (see e.g. \cite[p.118]{LT2} or \cite[Chapter 4, Section 5]{KPS}). Set $$x_1:=(x-\mu(t;x))_+ \mbox{ and } x_2:=\min\left\{ x,\mu(t;x) \right \}.$$ 
By the triangular inequality, we have
\begin{align*}
 &~\quad \left\|A_nx\right\|_{L_{2,q}(L_{\infty}(0,1)\otimes\mathcal{M},dm\otimes\tau)}\\&~\leq \left\|A_n(x_1) \right\|_{L_{2,q}(L_{\infty}(0,1)\otimes\mathcal{M},dm\otimes\tau)} +\left\|A_n(x_2)\right\|_{L_{2,q}(L_{\infty}(0,1)\otimes\mathcal{M},dm\otimes\tau)}.
\end{align*}
By Lemma \ref{an l2q noncomm}, we have
\begin{align*}\left\|A_n(x_1)\right\|_{L_{2,q}(L_{\infty}(0,1)\otimes\mathcal{M},dm\otimes\tau)}&\leq c_{abs}\log^{\frac1q}(e n)\left\|x_1\right\|_{L_{2,q}(0,1)}\\
&\leq 
c_{abs}\log^{\frac1q}(e n) \left\|\mu(x)\chi_{(0,t)}\right\|_{L_{2,q}(0,1)}  \\&
\le c_{abs}\cdot \epsilon\cdot \log^{\frac1q}(e n).
\end{align*}
By Theorem \ref{lust-piquard}, we have 
\begin{align*}
\left\|A_n(x_2)\right\|_{L_{2,q}(L_{\infty}(0,1)\otimes\mathcal{M},dm\otimes\tau)} &\leq c_{abs}\left\|\Big(\sum_{k=0}^n\frac1{k+1}(i_k(x_2))^2)^{\frac12}\right\|_{L_{2,q}( \mathcal{M}, \tau)} \\
&
 \leq c_{abs}\left\|\Big(\sum_{k=0}^n\frac1{k+1}(i_k(x_2))^2\Big)^{\frac12}\right\|_{L_{\infty }( \mathcal{M}, \tau)} \\
&
 \leq c_{abs}\left\| \sum_{k=0}^n\frac1{k+1}(i_k(x_2))^2 \right\|_{L_{\infty }( \mathcal{M}, \tau)}^{\frac12} \\
 &\le 
    c_{abs}\left( \ \sum_{k=0}^n \left\|  \frac1{k+1}(i_k(x_2))^2 \right\|_{L_{\infty }( \mathcal{M}, \tau)} \right ) ^{\frac12} \\
    &\le  c_{abs}\left( \sum_{k=0}^n \frac{1}{ k+1 } \norm{x_2}^2_{L_{\infty }(0,1)}\right)^{\frac12} \\ 
 &\leq 2c_{abs}\log^{\frac12}(n)\cdot \mu(t,x).
 \end{align*}
 Taking into account that $q <2$, we obtain that  
$$\limsup_{n\to\infty}\frac1{\log^{\frac1q}(n)}\left\|A_nx\right\|_{L_{2,q}(L_{\infty}(0,1)\otimes\mathcal{M},dm\otimes\tau)}    \leq  2  c_{abs}\epsilon.$$
Since $\epsilon>0$ is arbitrarily small, the assertion follows.
\end{proof}

\subsection{Estimates for the case when  $p<2$}
In this part, we collect some estimates which will be needed for the proof of Theorem \ref{main theorem} in the special  case when $p<2$. 
Note  that if $p<1$, then  $L_{p,q}$ is an interpolation space between $L_1$ and $L_{\frac{p}{2}}$\cite[Theorem 2]{Dilworth}. 
The following lemma is a consequence of \cite[Theorem 1.1.(b)]{CSZ}. 
\begin{lem}\label{inverse monotone norm}Let $p<1.$ If $x,y\in L_{\infty}(0,\infty)$ are finitely supported and if $y\prec x,$ then $$\left\|x\right\|_{L_{p,q}(0,\infty)}\leq c_{p,q}\left\|y\right\|_{L_{p,q}(0,\infty)}.$$
\end{lem}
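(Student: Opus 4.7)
The plan is to reduce the statement to a finite-dimensional majorization problem and then invoke \cite[Theorem 1.1.(b)]{CSZ}, which provides the reverse Hardy--Littlewood--P\'olya estimate valid in the concave regime $p<1$. Since both $x$ and $y$ are finitely supported and bounded, there is no issue with integrability, and the quasi-norm $\|\cdot\|_{L_{p,q}(0,\infty)}$ depends only on the decreasing rearrangement; thus I would first replace $x,y$ by $\mu(x),\mu(y)$, so that $y\prec x$ reads in the explicit form
\[
\int_0^t \mu(s,y)\,ds \leq \int_0^t \mu(s,x)\,ds,\quad t>0,\qquad \int_0^\infty \mu(s,y)\,ds = \int_0^\infty \mu(s,x)\,ds.
\]

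Next I would contextualize why the inequality goes in the stated direction. For $p\geq 1$ one has the familiar $\|y\|_{L_{p,q}}\lesssim \|x\|_{L_{p,q}}$, because $\|\cdot\|_{L_{p,q}}$ is then equivalent to a monotone convex symmetric norm on the Hardy--Littlewood--P\'olya cone. For $p<1$, however, the functional $t\mapsto t^{q/p}$ (with $q/p>1$ reversed by the fact that the quasi-norm acts on rearrangements which become more concentrated under $\prec$) interacts with majorization in the opposite direction, and the quasi-norm is Schur-concave in a suitable sense on the finitely supported step functions of fixed total integral. This is precisely the content of \cite[Theorem 1.1.(b)]{CSZ}.

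Finally, I would apply \cite[Theorem 1.1.(b)]{CSZ} directly to $\mu(x)$ and $\mu(y)$ to produce a constant $c_{p,q}$ depending only on $p$ and $q$ such that
\[
\|x\|_{L_{p,q}(0,\infty)} = \|\mu(x)\|_{L_{p,q}(0,\infty)} \leq c_{p,q}\|\mu(y)\|_{L_{p,q}(0,\infty)} = c_{p,q}\|y\|_{L_{p,q}(0,\infty)},
\]
completing the proof. The only conceptual obstacle is that the desired inequality contradicts the intuition from the $p\geq 1$ case, which is exactly why the finitely-supported boundedness hypothesis and the reverse estimate of CSZ are required; once the CSZ result is in hand, the lemma follows essentially by passing to rearrangements and matching notation.
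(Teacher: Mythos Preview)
Your proposal is correct and follows essentially the same route as the paper: the paper does not give a detailed argument but simply states that the lemma ``is a consequence of \cite[Theorem 1.1.(b)]{CSZ},'' which is exactly the reference you invoke after passing to decreasing rearrangements. Your additional heuristic explanation of the Schur-concavity for $p<1$ is context the paper omits, but the substance of both arguments is the single citation to CSZ.
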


\begin{lem}\label{bn norm} Let $1< p<2$ and $1\le q<\infty$ For every $k\geq0,$ let $i_k:S(0,1)\to S(\mathcal{M},\tau)$ be a $*$-homo\-morphism which preserves singular value functions. Define the operators $B_n$ from $L_{p,q}(0,1)$ to $L_1(L_{p,q}(\mathcal{M},\tau))$ by setting
$$(B_nx)(t):=\sum_{k=0}^nr_k(t)i_k(x),~ x\in  L_{p,q}(0,1), ~n\geq1.$$
We have $$\left\|B_n\right\|_{L_{p,q}(0,1)  \to L_1(L_{p,q}(\mathcal{M},\tau)) }\leq c_{p,q}n^{\frac1p},~n\geq1.$$
\end{lem}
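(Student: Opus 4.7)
My approach is to first invoke the noncommutative Lust-Piquard-type inequality from Theorem \ref{vector lust-piquard} to reduce the operator-norm estimate to a bound on the square function $\bigl(\sum_{k=0}^{n} i_k(x)^2\bigr)^{1/2}$, and then to control the linear operator $T\colon Y\mapsto\sum_{k=0}^{n} i_k(Y)$ from $L_{p/2,q/2}(0,1)$ to $L_{p/2,q/2}(\cM,\tau)$ via real interpolation between the quasi-Banach endpoint $L_{1/2}$ and the Banach endpoint $L_1$.

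After decomposing $x\in L_{p,q}(0,1)$ into its real and imaginary parts (at the cost of a universal constant), I may assume $x$ is self-adjoint, so that each $A_k:=i_k(x)$ is self-adjoint in $L_{p,q}(\cM,\tau)$. Theorem \ref{vector lust-piquard} then yields
$$
\|B_n x\|_{L_1(L_{p,q}(\cM,\tau))}\leq c_{p,q}\Big\|\Big(\sum_{k=0}^{n} A_k^{2}\Big)^{1/2}\Big\|_{L_{p,q}(\cM,\tau)}.
$$
Since $i_k$ is a $*$-homomorphism, $A_k^2=i_k(x^2)$, and the elementary identity $\|Z^{1/2}\|_{L_{p,q}}=\|Z\|_{L_{p/2,q/2}}^{1/2}$ (valid for every $Z\geq 0$) converts this into a bound on $\|T(x^2)\|_{L_{p/2,q/2}(\cM,\tau)}$ in terms of $\|x^2\|_{L_{p/2,q/2}(0,1)}=\|x\|_{L_{p,q}(0,1)}^{2}$.

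Next I will bound $T$ at two endpoints. The triangle inequality immediately gives $\|T\|_{L_1(0,1)\to L_1(\cM,\tau)}\leq n+1$. For the quasi-Banach endpoint $L_{1/2}$, the noncommutative Rotfel'd--Fack inequality $\tau(|a+b|^{1/2})\leq\tau(|a|^{1/2})+\tau(|b|^{1/2})$ makes $\|\cdot\|_{L_{1/2}}^{1/2}$ subadditive, and iteration gives
$$
\|TY\|_{L_{1/2}(\cM,\tau)}^{1/2}\leq\sum_{k=0}^{n}\|i_k(Y)\|_{L_{1/2}(\cM,\tau)}^{1/2}=(n+1)\|Y\|_{L_{1/2}(0,1)}^{1/2},
$$
whence $\|T\|_{L_{1/2}\to L_{1/2}}\leq (n+1)^{2}$. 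Real interpolation produces $(L_{1/2},L_1)_{\theta,q/2}=L_{p/2,q/2}$ with $\theta=2-2/p\in(0,1)$ (as $1<p<2$), and a standard interpolation estimate gives
$$
\|T\|_{L_{p/2,q/2}(0,1)\to L_{p/2,q/2}(\cM,\tau)}\leq c_{p,q}\bigl((n+1)^{2}\bigr)^{1-\theta}(n+1)^{\theta}=c_{p,q}(n+1)^{2/p}.
$$
Feeding this back with $Y=x^2$ and taking square roots via the identity above gives $\|(\sum A_k^{2})^{1/2}\|_{L_{p,q}(\cM,\tau)}\leq c_{p,q}'(n+1)^{1/p}\|x\|_{L_{p,q}(0,1)}$, whence $\|B_n\|\leq c_{p,q}''\,n^{1/p}$.

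The principal technical point I expect is the validity of the real-interpolation step at the quasi-Banach endpoint $L_{1/2}(\cM,\tau)$; this is part of the standard noncommutative interpolation theory for semifinite traces. As a safety net, the $L_{1/2}$ endpoint can be replaced by any $L_r$ with $0<r<1$: on each such endpoint, $r$-subadditivity of $\|\cdot\|_{L_r}^{r}$ yields $\|T\|_{L_r\to L_r}\leq(n+1)^{1/r}$, and the interpolation exponent $1/(p/2)=2/p$ is invariant under this choice, so the final estimate is unaffected.
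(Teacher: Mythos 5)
Your argument is correct and arrives at the right bound, but the key middle step is genuinely different from the paper's. Both proofs start the same way: Theorem \ref{vector lust-piquard} reduces the claim to estimating $\bigl\|\sum_{k=0}^n i_k(x)^2\bigr\|_{L_{p/2,q/2}(\cM,\tau)}^{1/2}$ via the identity $\|Z^{1/2}\|_{L_{p,q}}=\|Z\|_{L_{p/2,q/2}}^{1/2}$. From there the paper exploits $p/2<1$ through Lemma \ref{inverse monotone norm} (anti-monotonicity of the $L_{p/2,q/2}$-quasinorm under Hardy--Littlewood--P\'olya majorization, imported from \cite{CSZ}): combined with \eqref{maj property2} and order continuity, this replaces the sum of the $n+1$ equimeasurable positive operators $i_k(x)^2$ by their disjoint sum $(\mu(x)^2)^{\oplus(n+1)}$, whose norm is then computed exactly as $(n+1)^{2/p}\|x\|_{L_{p,q}(0,1)}^2$. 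You instead bound the summation operator $T:Y\mapsto\sum_{k=0}^n i_k(Y)$ at the endpoints $L_1$ (norm $n+1$, trivially) and $L_{1/2}$ (norm $(n+1)^2$, via $\tfrac12$-subadditivity) and real-interpolate; your exponent arithmetic is correct, and the observation that the choice of lower endpoint $L_r$, $0<r<1$, does not affect the final exponent $2/p$ is a nice robustness check. The trade-off is that your route leans on two nontrivial pieces of noncommutative folklore which you flag but do not prove: the $r$-triangle inequality $\tau(|a+b|^r)\le\tau(|a|^r)+\tau(|b|^r)$ for general (not merely positive) $\tau$-measurable operators, and the identification $(L_{1/2}(\cM),L_1(\cM))_{\theta,q/2}=L_{p/2,q/2}(\cM)$, which requires the $K$-functional formula $K(t,x;L_{p_0}(\cM),L_{p_1}(\cM))\approx K(t,\mu(x);L_{p_0},L_{p_1})$ at a quasi-Banach endpoint. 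Both facts are available in the literature (Rotfel'd/Fack--Kosaki-type results and the standard noncommutative $K$-functional computation for $0<p_0<p_1\le\infty$), so the argument closes, but precise citations would be needed there; the paper's majorization route avoids interpolation entirely. Your explicit reduction to self-adjoint $x$ before invoking Theorem \ref{vector lust-piquard} is a point of care the paper glosses over, and both arguments use $1<p<2$ only through $p/2<1$ (equivalently $\theta=2-2/p\in(0,1)$).
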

\begin{proof} By Theorem \ref{vector lust-piquard}, we have
\begin{align*}
\left\|B_nx\right\|_{L_1(L_{p,q}(\mathcal{M},\tau))}&\lesssim\left\|
(\sum_{k=0}^n(i_k(x))^2)^{\frac12}\right\|_{L_{p,q}(\cM,\tau)}\\
&=\left( \int_0^\infty 
\mu\left(\sum_{k=0}^n(i_k(x))^2\right)^{\frac{q}{2}} d t^{q/p}\right)^{1/q }\\
&=\left( \left( \int_0^\infty 
\mu\left(  \sum_{k=0}^n(i_k(x))^2\right)^{\frac{q}{2}} d t^{\frac{q}{2}/\frac{p}{2}}\right)^{2/q } \right)^{\frac12}\\
&=\left\|\sum_{k=0}^n(i_k(x))^2\right\|_{ {L_{\frac{p}{2},\frac{q}{2}}(\cM,\tau)} }^{\frac12}.
\end{align*}

By Lemma \ref{inverse monotone norm}, the norm in $L_{\frac{p}{2},\frac{q}{2}}$ is anti-monotone with respect to   majorization (not submajorization). 
For every $N\geq0,$ it follows from \eqref{maj property2} that 
$$\sum_{k=0}^n(i_k(x\chi_{\{|x|\leq N\}}))^2\otimes e_k^{\ell_{p,q}}\prec \sum_{k=0}^n(i_k(x\chi_{\{|x|\leq N\}}))^2,$$
it follows that
$$\left\|
\sum_{k=0}^n(i_k(x\chi_{\{|x|\leq N\}}))^2\right \|_{L_{\frac{p}{2},\frac{q}{2}}(\cM,\tau)}\leq c_{p,q} \left\|\sum_{k=0}^n(i_k(x\chi_{\{|x|\leq N\}}))^2\otimes e_k^{\ell_{p,q}} \right\|_{L_{\frac{p}{2},\frac{q}{2}}{(\cM\overline{\otimes}\ell_\infty )}}.$$
Since the quasi-norms in both spaces $L_{p,q}(\mathcal{M},\tau)$ and $L_{p,q}(\mathcal{M}\overline{\otimes} \ell_{\infty},\tau\otimes\Sigma)$ are order continuous (see e.g. \cite[Proposition 3.1]{DDS2014}), it follows that
$$\left\|
\sum_{k=0}^n\left(i_k(x)\right)^2\right\|_{L_{\frac{p}{2},\frac{q}{2}}(\cM,\tau)} \leq c_{p,q}\left \|\sum_{k=0}^n(i_k(x))^2\otimes e_k^{\ell_{p,q}} \right\|_{L_{\frac{p}{2},\frac{q}{2}}{(\cM\overline{\otimes}\ell_\infty )}}.$$
Therefore, we have
\begin{align*}
\left\|B_nx\right\|_{L_1(L_{p,q}(\mathcal{M},\tau)) }& \lesssim_{p,q} \left\|\sum_{k=0}^n\left( i_k(x)\right)^2\otimes e_k^{\ell_{p,q}} \right\|_{L_{\frac{p}{2},\frac{q}{2}}{(\cM\overline{\otimes}\ell_\infty )}} ^{\frac12} \\
&=~\left\|(\mu(x)^2)^{\oplus(n+1)}\right\|_{L_{\frac{p}{2},\frac{q}{2}}(0,\infty )}^{\frac12}=(n+1)^{\frac1p}\left\|x\right\|_{L_{p,q} (0,1)}.
\end{align*}
This completes the proof. 
\end{proof}
The following estimate is an analogy of that in Lemma \ref{anx bound noncomm} above. 
\begin{lem}\label{bnx norm} Let $p<2.$ Let the operators $B_n$ be as in Lemma \ref{bn norm}. For every $x\in L_{p,q}(0,1),$ we have $$\left\|B_nx\right\|_{L_1(L_{p,q})}=o(n^{\frac1p}),$$
as  $n\to\infty.$
\end{lem}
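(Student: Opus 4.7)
The plan is to follow the same splitting template used in the proof of Lemma~\ref{anx bound noncomm}, but with Lemma~\ref{bn norm} (giving the polynomial bound $n^{1/p}$ on $\|B_n\|$) playing the role that Lemma~\ref{an l2q noncomm} played there, and with the assumption $p<2$ making the $L_\infty$--bounded part negligible on the scale $n^{1/p}$.

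Without loss of generality, assume $x\geq 0$. Fix $\varepsilon>0$. Since $L_{p,q}(0,1)$ is separable, $\mu(x)\chi_{(0,t)}\to 0$ in $L_{p,q}(0,1)$ as $t\to 0^+$, so we may choose $t>0$ with $\|\mu(x)\chi_{(0,t)}\|_{L_{p,q}(0,1)}\leq\varepsilon$. Set
\[
x_1:=(x-\mu(t;x))_+,\qquad x_2:=\min\{x,\mu(t;x)\},
\]
so that $x=x_1+x_2$, $\|x_1\|_{L_{p,q}(0,1)}\leq\varepsilon$ and $\|x_2\|_{L_\infty(0,1)}\leq\mu(t;x)$.

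For $x_1$, apply Lemma~\ref{bn norm} directly to get
\[
\|B_n x_1\|_{L_1(L_{p,q}(\cM,\tau))}\leq c_{p,q}\,n^{1/p}\,\|x_1\|_{L_{p,q}(0,1)}\leq c_{p,q}\,n^{1/p}\varepsilon.
\]
For $x_2$, invoke the noncommutative Khinchine-type inequality (Theorem~\ref{vector lust-piquard}) to obtain
\[
\|B_n x_2\|_{L_1(L_{p,q}(\cM,\tau))}\lesssim_{p,q}\left\|\left(\sum_{k=0}^n(i_k(x_2))^2\right)^{1/2}\right\|_{L_{p,q}(\cM,\tau)}.
\]
Because $\tau$ is a tracial state we have $\|y\|_{L_{p,q}(\cM,\tau)}\leq\|\chi_{(0,1)}\|_{L_{p,q}(0,1)}\|y\|_{L_\infty(\cM,\tau)}$; combined with the fact that each $i_k$ preserves singular values (hence operator norm), we bound the right-hand side by
\[
c_{p,q}\left\|\sum_{k=0}^n(i_k(x_2))^2\right\|_{L_\infty(\cM,\tau)}^{1/2}\leq c_{p,q}\sqrt{n+1}\,\|x_2\|_{L_\infty(0,1)}\leq c_{p,q}\sqrt{n+1}\,\mu(t;x).
\]

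Combining both estimates,
\[
\frac{\|B_n x\|_{L_1(L_{p,q}(\cM,\tau))}}{n^{1/p}}\leq c_{p,q}\varepsilon+c_{p,q}\,\mu(t;x)\,n^{1/2-1/p}.
\]
Since $p<2$ gives $1/2-1/p<0$, the second term tends to $0$ as $n\to\infty$, so $\limsup_{n\to\infty}n^{-1/p}\|B_n x\|_{L_1(L_{p,q})}\leq c_{p,q}\varepsilon$, and letting $\varepsilon\to 0^+$ finishes the proof.

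The step to watch is purely bookkeeping: all the inputs (Khinchine via Theorem~\ref{vector lust-piquard}, separability of $L_{p,q}$, operator-norm control of $B_n$ from Lemma~\ref{bn norm}) are already in hand, and the decisive inequality $1/2<1/p$ makes the $L_\infty$-truncated piece $o(n^{1/p})$ for free. There is no genuine obstacle beyond recording the exponent comparison correctly.
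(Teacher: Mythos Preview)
Your proof is correct and follows essentially the same approach as the paper's: split $x$ into an $L_\infty$-bounded piece (handled via Theorem~\ref{vector lust-piquard} and the trivial $L_\infty$ bound to get growth $n^{1/2}$) and a small-$L_{p,q}$-norm remainder (handled by the operator norm bound of Lemma~\ref{bn norm}), then use $1/2<1/p$. The only cosmetic difference is that the paper chooses an arbitrary $y\in L_\infty(0,1)$ with $\|x-y\|_{p,q}\le\varepsilon$ via order continuity, whereas you use the explicit level-truncation $x_2=\min\{x,\mu(t;x)\}$ borrowed from the proof of Lemma~\ref{anx bound noncomm}; both choices yield the same two estimates.
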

\begin{proof} By Theorem \ref{vector lust-piquard}, for any $z\in L_{\infty }(0,1)$, we have
$$\left\|B_nz\right\|_{L_1(L_{p,q}(\cM,\tau))}\lesssim\left\|
(\sum_{k=0}^n(i_k(z))^2)^{\frac12}\right\|_{L_{p,q}(\cM,\tau)}
=\left\|\sum_{k=0}^n(i_k(z))^2\right\|_{ {L_{\frac{p}{2},\frac{q}{2}}(\cM,\tau)} }^{\frac12}.$$
By the triangular inequality, we have 
$$\left\|B_nz\right \|_{L_1(L_{p,q}(\cM,\tau) )}\lesssim 
\left( \sum_{k=0}^n\left\|(i_k(z))^2\right\|_{ {L_{\frac{p}{2},\frac{q}{2}}(\cM,\tau)} } \right) ^{\frac12}\le \left( \sum_{k=0}^n \left\|z\right\|_{\infty}^ 2 \right) ^{\frac12}\le 
(n+1)^{\frac12} \left\|z\right\|_{\infty}.$$
Since $L_{p,q}(0,1)$ has order continuous norm, it follows that for any 
fixed $\epsilon>0$, there exists   $y\in L_{\infty}(0,1)$ such that $\left\|x-y\right\|_{p,q}\leq\epsilon.$ It follows that
\begin{align*}
\left\|B_nx\right\|_{L_1(L_{p,q}(\cM,\tau))}&~\quad \leq~ \quad  \left\|B_ny\right\|_{L_1(L_{p,q}(\cM,\tau))}+\left\|B_n(x-y)\right\|_{L_1(L_{p,q}(\cM,\tau))}\\
&\stackrel{\tiny \mbox{Lemma \ref{bn norm}}}{\lesssim}  n^{\frac12}\left\|y\right\|_{\infty}+n^{\frac1p}\epsilon.
\end{align*}
Taking into account that $p<2$, we have 
$$\limsup_{n\to\infty}n^{-\frac1p}\left\|B_nx\right\|_{L_1(L_{p,q}(\cM,\tau))}\lesssim\epsilon.$$
Since $\epsilon>0$ is arbitrarily taken, the assertion follows.
\end{proof}

\section{Proof of Theorem \ref{main theorem}}\label{noncommutative section}
The  goal of this section is to prove Theorem \ref{main theorem}. 
%\subsection{Common part of the proofs}\label{common}

Assume that $\ell_F$ is a separable symmetric sequence space and $E(0,1)$ is a KB-symmetric function space (i.e. a symmetric function space having the Fatou property and order continuous norm). 
Let $T:\ell_{F}\to E(\mathcal{M},\tau)$ be an isomorphic  embedding.

Set $(\mathcal{M}_0,\tau_0)=(\mathcal{M}\oplus\mathcal{M},\frac12\tau\oplus\frac12\tau).$ Set 
$$T_0(\alpha)=\Re(T(\alpha))\oplus\Im(T(\alpha)),\quad \alpha\in \ell_{F}^{\mathbb{R}},$$
which  is also an isomorphic embedding
from $\ell_{F}$ into $E(\cM_0,\tau_0)$.
Since  $T_0\alpha$ is an self-adjoint operator in $E(\cM_0,\tau_0)$ for every $\alpha\in \ell_{F}^{\mathbb{R}}$, one may 
 assume without loss of generality that $T\alpha$ is self-adjoint for every $\alpha\in \ell_{F}^{\mathbb{R}}.$
 
By replacing $\cM$ with $\cM\overline{\otimes} L_\infty(0,1)$, we may assume that $\cM$ is atomless. 
Denote by  $\{e_k\}_{k\geq0}$ be the standard unit basis in $\ell_{F}$ and denote by  $x_k=Te_k\in E (\mathcal{M},\tau),$ $k\geq0.$ 
By the preceding paragraph, we may assume without loss of generality that $x_k=x_k^*.$ 

For any $\alpha=(\alpha(k))_{k\ge 0}\in \ell_{F}$ and for   every $t\in(0,1),$ we have
\begin{align}\label{all t equivalence}
\left\|\sum_{k\geq0}\alpha(k)r_k(t)x_k\right\|_{E(\cM,\tau)}&\quad =\quad \left\|
T\left(\sum_{k\geq0}\alpha(k)r_k(t)e_k\right)\right\|_{E(\cM,\tau)}\nonumber \\
&\quad \approx_{C(T)}\left\|
\sum_{k\geq0}\alpha(k)r_k(t)e_k\right\|_{\ell_{F}}= \left\|\alpha\right\|_{\ell_{F}}.
\end{align}
 
By Theorem \ref{subsequence splitting property}, 
passing to a subsequence if needed, we may  write
$$x_k=u_k+v_k+w_k,$$
where  $u_k,v_k,w_k\in E (\mathcal{M},\tau),$ $k\geq0,$ is self-adjoint, 
  $ u:=\mu(u_1)=\mu(u_k)$ for all $k\geq0$,
elements of the sequence $\{v_k\}_{k\geq0}$ are mutually orthogonal, i.e. $v_kv_l=0,$ $k\neq l$,
elements $w_k\to0$ in $E (\mathcal{M},\tau)$ as $k\to\infty.$
Let $i_k:S(0,\infty)\to S(\cM,\tau)$ be a  $*$-homomorphism preserving singular value functions such that $i_k(u)=u_k$.

Set $y_k=u_k+v_k,$ $k\geq0.$
Passing to a subsequence if needed, we may assume that (see e.g. Proposition 1.a.3 in \cite{LT2})
$$\{x_k\}_{k\ge 0}\sim \{y_k\}$$
In particular, by \eqref{all t equivalence},  we have
\begin{equation}\label{vector-valued equivalence}
\int_0^1\left \|\sum_{k\geq0}\alpha_kr_k(t)y_k\right \|_{E (\cM,\tau)}dt\approx_{C(T)}\left\|\alpha\right \|_{\ell_{F}}
\end{equation}
and
\begin{equation}\label{uniform vector-valued equivalence}
\inf_{t\in(0,1)}\left\|\sum_{k\geq0}\alpha_kr_k(t)y_k\right \|_{E(\cM,\tau)}\approx \sup_{t\in(0,1)}\left\|\sum_{k\geq0}\alpha_kr_k(t)y_k\right \|_{ E (\cM,\tau)}\approx_{C(T)}\left\|\alpha\right\|_{\ell_{F}}.
\end{equation}

Assume, by way of  contradiction, that $\ell_{p,q}$ is isomorphic to a subspace of $L_{p,q}(\mathcal{M},\tau),$ $ (p,q)\in (1,\infty)\times [1,\infty )$, $p\neq q$. 
Let $\ell_F=\ell_{p,q}$ and $E(0,1)=L_{p,q}(0,1)$. 
There are two possibilities  for $\{v_k\}_{k\ge  0}$:
\begin{enumerate}
  \item if there exists a subsequence of $\{v_k\}_{k\ge 0}$ converging to $0$ in norm, then, passing to a subsequence if necessary  \cite[Proposition 1.a.3]{LT2}, we may assume that 
      \begin{align}\label{case1}
      \{u_k\}_{k\ge 0}\sim \{y_k\}_{k\ge 0}\sim \{x_k\}_{k\ge 0};
      \end{align}
  \item if there exists a constant $\delta>0$ such that $\norm{v_k}_{p,q}>\delta$ for all $k\ge 0$, then,  passing to a subsequence if needed, we infer from Lemma \ref{2.1} that 
      \begin{align}\label{case2}
      \{v_k\}_{k\geq0}\sim \{e^{\ell_q}_k\}_{k\geq0}.
       \end{align} 
\end{enumerate}

Below, we prove Theorem \ref{main theorem} case by case.  
 
\subsection{The case when $p> 2$}
In this subsection,  we prove a much more general result  for intermediate spaces  for the Banach couple  $L_2$ and $L_\infty$ (see \cite[Chapter I.3]{KPS}).
\begin{theorem}\label{sequence_com_nc}
Let $2<  r <\infty$ and let 
  $\ell_F$ be a separable symmetric sequence space such that $\ell_r \subset \ell_F \subset \ell_\infty$.
If  $E(0,1) $ is  a KB-symmetric function space such that  $L_\infty (0,1) \subset E(0,1)\subset L_r(0,1) $,    
then
 $$\ell_F\hookrightarrow E(0,1) \mbox{ if and only if }   \ell_F  \hookrightarrow E(\cM,\tau).$$
\end{theorem}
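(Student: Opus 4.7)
The forward direction is immediate: since $\cM$ is semifinite and, without loss of generality, atomless (after tensoring with $L_\infty(0,1)$), there is a trace-preserving $*$-embedding $L_\infty(0,1) \hookrightarrow \cM$, extending to an isometric, singular-value-preserving embedding $E(0,1) \hookrightarrow E(\cM,\tau)$. Composing any $\ell_F \hookrightarrow E(0,1)$ then gives the desired embedding into $E(\cM,\tau)$.

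For the reverse direction, I would follow the blueprint already used in the proof of Theorem~\ref{main theorem}. Given an isomorphic embedding $T : \ell_F \to E(\cM,\tau)$, first reduce to self-adjoint $x_k := T(e_k)$ via the $\cM \oplus \cM$ trick and to atomless $\cM$ via tensoring with $L_\infty(0,1)$. By Theorem~\ref{subsequence splitting property}, after passing to a subsequence one writes $x_k = u_k + v_k + w_k$ with $u_k = i_k(u)$ equimeasurable through singular-value-preserving $*$-homomorphisms $i_k$, self-adjoint pairwise orthogonal $v_k$, and $w_k \to 0$; set $y_k := u_k + v_k$. By \eqref{uniform vector-valued equivalence}, $\{y_k\}$ is equivalent to the $\ell_F$-basis uniformly under random signs. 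The plan is to construct a basic-sequence-equivalent commutative model $\{\tilde y_k\}$ by realizing independent commutative copies $\tilde u_k$ of $u$ inside a trace-preserving copy of $L_\infty((0,1)^{\mathbb{N}}) \hookrightarrow \cM$, observing that the $\{v_k\}$ already generate a commutative subalgebra identifiable with some $L_\infty(\Omega')$, and setting $\tilde y_k := \tilde u_k + \tilde v_k$ inside a commutative symmetric space $E(\Omega)$ on a probability space $\Omega$; such a commutative space embeds isometrically into $E(0,1)$, yielding the desired $\ell_F \hookrightarrow E(0,1)$.

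The main obstacle---the heart of the theorem---is establishing the basic-sequence equivalence $\{y_k\} \sim \{\tilde y_k\}$, and it is here that the hypothesis $r > 2$ is indispensable. The noncommutative Khinchine--Kahane inequality (Theorem~\ref{lust-piquard}), together with its reverse direction available for $E \subset L_r$ with $r > 2$ in the self-adjoint setting, gives
\[
\int_0^1 \Bigl\|\sum_k \alpha_k r_k(t) y_k\Bigr\|_{E(\cM,\tau)}\, dt \;\approx\; \Bigl\|\Bigl(\sum_k \alpha_k^2 y_k^2\Bigr)^{1/2}\Bigr\|_{E(\cM,\tau)},
\]
and analogously in the commutative setting. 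Expanding $y_k^2 = u_k^2 + u_k v_k + v_k u_k + v_k^2$ and using the pairwise orthogonality of $\{v_k\}$ to control the mixed terms, the problem reduces to comparing $\|(\sum_k \alpha_k^2 u_k^2)^{1/2}\|_E$ in the original algebra and in the commutative model. This final step leverages the submajorization $\bigoplus_k \alpha_k^2 u_k^2 \prec\prec \sum_k \alpha_k^2 u_k^2$ from \eqref{maj property2}---preserved under the concave monotone map $x \mapsto x^{1/2}$---together with the concentration of independent sums in $L_r$ for $r > 2$ needed to return from the direct-sum norm back to the sum norm. This asymmetry is genuinely unavailable in the range $r \le 2$, which is precisely why the complementary case of Theorem~\ref{main theorem} (namely $p \le 2$) must be treated by the separate and more delicate machinery developed in the preceding auxiliary sections.
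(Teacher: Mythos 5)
Your forward direction matches what the paper takes for granted, and your use of the subsequence splitting decomposition $x_k=u_k+v_k+w_k$ is the paper's starting point too. But the reverse direction diverges, and the route you propose has a genuine gap. The paper does \emph{not} build a commutative model containing a nonzero equimeasurable part; it shows that the hypotheses force $u=0$ outright. The argument is short: by the triangle inequality and \eqref{vector-valued equivalence}, $\int_0^1\bigl\|\sum_k\alpha_k r_k(t)u_k\bigr\|_{L_r(\cM,\tau)}dt\lesssim\|\alpha\|_{\ell_r}$; on the other hand, orthogonality of the Rademachers in $L_2$ together with Kahane's inequality gives the exact lower bound $\|\alpha\|_{\ell_2}\|u\|_{L_2(\cM,\tau)}\lesssim\int_0^1\bigl\|\sum_k\alpha_k r_k(t)u_k\bigr\|_{L_r(\cM,\tau)}dt$. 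If $u\neq0$ this yields $\|\alpha\|_{\ell_2}\lesssim\|\alpha\|_{\ell_r}$, impossible for $r>2$. Hence $y_k=v_k$, a pairwise orthogonal self-adjoint sequence, which transfers verbatim to disjointly supported functions in $E(0,1)$. This is where $r>2$ is actually used --- not, as you suggest, to secure a reverse Khinchine inequality.

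Your plan, by contrast, requires two estimates that are not available. First, the two-sided equivalence $\int_0^1\|\sum_k\alpha_k r_k(t)y_k\|_E\,dt\approx\|(\sum_k\alpha_k^2y_k^2)^{1/2}\|_E$ is not established in the paper (Theorems \ref{lust-piquard} and \ref{vector lust-piquard} are one-sided), and a general KB-symmetric space with $L_\infty\subset E\subset L_r$ need not satisfy the convexity hypotheses under which the reverse direction is known. Second, and more seriously, comparing $\|(\sum_k\alpha_k^2u_k^2)^{1/2}\|_E$ in $\cM$ with the same expression for \emph{independent} commutative copies $\tilde u_k$ cannot be done by submajorization alone: \eqref{maj property2} gives only $\oplus_k\alpha_k^2u_k^2\prec\prec\sum_k\alpha_k^2u_k^2$, i.e.\ one inequality, and the reverse fails in general. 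Concretely, if all $u_k$ equal a fixed $u_1$, then $(\sum_k\alpha_k^2u_k^2)^{1/2}=\|\alpha\|_{\ell_2}|u_1|$, which is not equivalent to the square function of independent copies; your claimed equivalence $\{y_k\}\sim\{\tilde y_k\}$ is simply false in that configuration, and nothing in your argument excludes it. The paper's $L_2$ lower bound is precisely the device that rules out such correlated $u_k$ (indeed any nonzero $u$), and without it your construction does not close.
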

\begin{proof}
%The upper $p$-estimate for $p>2$ and lower $q$-estimate for some %$q<\infty$ imply that $E(0,\infty)$
%has Boyd indices $p_E>p$ and $q_E <q $\cite[p.132]{LT2}.
%Therefore, by Boyd interpolation theorem\cite[Theorem %2.b.11]{LT2}, $E(0,\infty)$
%is an interpolation space between $L_p(0,\infty)$ and %$L_q(0,\infty)$.

It suffices to show that   $ \ell_F  \hookrightarrow E(\cM,\tau)$ implies that $\ell_F\hookrightarrow E(0,1) $.

Let $T:\ell_F \to E(\cM,\tau)$ be an isomorphic embedding.
Since $E(0,1)$ is an intermediate space between $L_r  (0,1) $ and $L_\infty(0,1)  $,
 it follows from  \cite[Chapter I, Theorem 1.8]{BS}
 that $$\left\|\cdot \right\|_{L_2(\cM,\tau)} \lesssim  \norm{\cdot}_{L_{r}(\cM,\tau)} \lesssim  \left\|\cdot\right\|_{E(\cM,\tau)}\lesssim  \left\|\cdot\right\|_{L_\infty (\cM,\tau)} , $$
 and since $\ell_r\subset \ell_F\subset \ell_\infty $, it follows that 
 $$\norm{\cdot}_{\ell_\infty }\lesssim  \left\|\cdot\right\|_{\ell_F} \lesssim \norm{\cdot}_{\ell_r}\lesssim\norm{\cdot}_{\ell_2}.$$
  It follows from the triangle inequality of $L_r(\cM,\tau)$ that, passing to a subsequence if necessary, we have 
\begin{align}\label{ineqr} &~\qquad\qquad 
\int_0^1\left\|
\sum_{k\geq0}\alpha_kr_k(t)u_k\right\|_{L_r(\cM,\tau)}dt\nonumber\\
 &~\qquad \leq \quad  \int_0^1\left\|
\sum_{k\geq0}\alpha_kr_k(t)y_k\right\|_{L_r(\cM,\tau)} dt+\int_0^1\left\|
\sum_{k\geq0}\alpha_kr_k(t)v_k
\right\|_{L_r (\cM,\tau)} dt\nonumber\\
 &~\qquad \le \quad  \int_0^1\left\|
\sum_{k\geq0}\alpha_kr_k(t)y_k\right\|_{E(\cM,\tau)} dt+\int_0^1\left\|
\sum_{k\geq0}\alpha_kr_k(t)v_k
\right\|_{L_r (\cM,\tau)} dt\nonumber\\
&\qquad \stackrel{\eqref{vector-valued equivalence} }{\lesssim }  \quad  \left\|\alpha\right\|_{\ell_F}  +\norm{\alpha}_{\ell_r }  \lesssim \norm{\alpha}_{\ell_r}  .
\end{align}
  It follows from Kahane inequality (see e.g. \cite[Theorem 6.2.5]{AK}) that
\begin{align*}
 \left\|\alpha\right\|_{\ell_2} \left\|u\right\|_{L_2(\cM,\tau)}&=
\left(\int_0^1 \left\|\sum_{k\geq0}\alpha_kr_k(t)u_k\right\|_{L_2(\cM,\tau)}^2dt\right)^{\frac12}
\\
&\lesssim\int_0^1 \left\|\sum_{k\geq0}\alpha_kr_k(t)u_k\right\|_{L_2(\cM,\tau)} dt\\
&\lesssim\int_0^1 \left\|\sum_{k\geq0}\alpha_kr_k(t)u_k \right \|_{L_r (\cM,\tau)} dt\stackrel{\eqref{ineqr}}{\lesssim}  \left\|
\alpha \right \|_{\ell_r}  .
\end{align*}
If $u\neq0,$ then $\left\|\alpha\right\|_{\ell_r }\gtrsim \left\|\alpha\right\|_{\ell_2}$ for every $\alpha\in \ell_{2} $, which is a contradiction with the fact  that  $   \norm{\cdot}_{\ell_2} \ge \norm{\cdot}_{\ell_r}  $.
Therefore, $u_k=0$ for all $k\ge 0$.

Passing to a subsequence if necessary, $\{T(e_n^{\ell_F})\}_{n\ge 0}$ in $\ell_F$ is equivalent to a sequence  $\{v_k\}_{k\ge 0}$   of 
pairwise orthogonal self-adjoint elements  in $E(\cM,\tau)$.
Clearly, $\{v_k\}_{k\ge 0}$ in $E(\cM,\tau)$ is equivalent to a sequence disjointly supported elements in $E(0,1)$.
This implies that $\ell_{F}$ embeds into $E(0,1)$.
\end{proof}
Note that $\ell_{\frac{2+p}{2}}\subset \ell_{p} $ and $L_{p,q}(0,1)\subset L_{\frac{2+p}{2}} $ for any $p>2$. The following result is a straightforward consequence of Theorem \ref{sequence_com_nc} and the result $\ell_{p,q}\not\hookrightarrow L_{p,q}(0,1)$ obtained in \cite{KS}. 
\begin{cor}
  If $p>2$, $1\le q<\infty$ and $p\ne q$, then 
  $$\ell_{p,q}\not\hookrightarrow L_{p,q}(\cM,\tau). $$
\end{cor}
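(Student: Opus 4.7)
The plan is to apply Theorem \ref{sequence_com_nc} in a direct, essentially mechanical fashion, with the single non-trivial input being the commutative result $\ell_{p,q}\not\hookrightarrow L_{p,q}(0,1)$ from \cite{KS}. I would take $\ell_F=\ell_{p,q}$, $E(0,1)=L_{p,q}(0,1)$, and pick any intermediate exponent $r$ with $2<r<p$; the value $r=(2+p)/2$ singled out in the preceding remark is the natural choice and uses exactly the hypothesis $p>2$.

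Before invoking the theorem, I would verify the structural hypotheses. The space $\ell_{p,q}$ is separable and symmetric since $1\le q<\infty$. The function space $L_{p,q}(0,1)$ has the Fatou property and, again because $q<\infty$, has order continuous norm, so it qualifies as a KB-symmetric function space. The standard nesting of Lorentz sequence spaces (any $\ell_{p_1,q_1}\subset \ell_{p_2,q_2}$ when $p_1<p_2$) yields $\ell_r\subset \ell_{p,q}\subset \ell_\infty$; the analogous nesting for Lorentz function spaces on the finite measure space $(0,1)$ produces $L_\infty(0,1)\subset L_{p,q}(0,1)\subset L_r(0,1)$.

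With these inclusions in hand, Theorem \ref{sequence_com_nc} supplies the equivalence
$$\ell_{p,q}\hookrightarrow L_{p,q}(\cM,\tau) \quad \Longleftrightarrow \quad \ell_{p,q}\hookrightarrow L_{p,q}(0,1).$$
The right-hand embedding is ruled out for $p\ne q$ by the main theorem of \cite{KS}, so the left-hand embedding must also fail, which is precisely the content of the corollary.

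I do not anticipate any genuine obstacle: the argument is essentially bookkeeping on top of the already established transfer principle (Theorem \ref{sequence_com_nc}) and the deep commutative impossibility result of Kalton--Sukochev. The only point deserving care is that the assumption $p>2$ is precisely what guarantees the existence of an admissible intermediate $r\in(2,p)$; without it the hypotheses of Theorem \ref{sequence_com_nc} cannot be met, and indeed the range $1<p\le 2$ must be handled by the different techniques (Khinchine-type inequalities, subsequence splitting, and the Lemmas \ref{bn norm}--\ref{bnx norm} estimates) developed elsewhere in the paper.
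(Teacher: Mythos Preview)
Your proposal is correct and mirrors the paper's own argument: pick $r=(2+p)/2\in(2,p)$, check the Lorentz inclusions needed for Theorem~\ref{sequence_com_nc}, and then invoke the commutative non-embedding $\ell_{p,q}\not\hookrightarrow L_{p,q}(0,1)$ from \cite{KS}. (One trivial slip: \cite{KS} is Kuryakov--Sukochev, not Kalton--Sukochev.)
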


\subsection{Proof of Theorem \ref{main theorem} for  case  $p<2$}
\begin{proof} We may assume that  $\{v_k\}_{k\geq0}$ is  equivalent to a unit vector basis in $\ell_q$ or that $v_k=0,$ $k\geq0.$

By the triangle inequality of $L_{p,q}(\cM,\tau)$, we have
\begin{align*}&~\quad \left|\int_0^1\left\|\sum_{k=0}^nr_k(t)y_k\right\|_{L_{p,q}(\cM,\tau)}dt
-\int_0^1\left\|\sum_{k=0}^nr_k(t)v_k\right\|_{L_{p,q}(\cM,\tau)}dt\right|\\
&\leq\int_0^1\left \|\sum_{k=0}^nr_k(t)u_k\right\|_{L_{p,q}(\cM,\tau)} dt.
\end{align*}
By Lemma \ref{bnx norm} and the definition of $B_n$, we have
$$\int_0^1\left\|\sum_{k=0}^nr_k(t)u_k\right\|_{L_{p,q}(\cM,\tau)}dt=\int_0^1\left\|(B_n u)(t) \right\|_{L_{p,q}(\cM,\tau)}dt=\left\|B_nu\right\|_{L_1(L_{p,q}(\cM,\tau))}=o(n^{\frac1p}),$$
as $n\to\infty.$
By  \eqref{vector-valued equivalence}, we have 
$$\int_0^1\left\|\sum_{k=0}^nr_k(t)y_k\right\|_{L_{p,q}(\cM,\tau)}dt\approx \norm{\sum_{k=0}^n e^{\ell_{p,q}}_k}_{\ell_{p,q}}\approx n^{\frac1p}.  $$
Combining previous estimates, we obtain (for sufficiently large $n$)
$$\int_0^1\left\|\sum_{k=0}^nr_k(t)v_k\right\|_{L_{p,q}(\cM,\tau)}dt\approx n^{\frac1p}.$$
If $\{v_k\}_{k\geq0}=0,$ then we obtain a contradiction. If $\{v_k\}_{k\geq0}$ is equivalent to a unit vector basis in $\ell_q,$ then
$$n^{\frac1q}\approx\left\|\sum_{k=0}^nv_k\right\|_{L_{p,q}(\cM,\tau)}=\int_0^1\left\|\sum_{k=0}^nr_k(t)v_k\right\|_{L_{p,q}(\cM,\tau)}dt\approx n^{\frac1p}.$$
 This is impossible for $p\neq q$, which    completes the proof.
\end{proof}

\subsection{Proof of Theorem \ref{main theorem} for  case   $p=2,$ $1\leq q<2$}

\begin{proof}{\bf Step 1:} Suppose that
\eqref{case2} holds, i.e.,  $\left\{v_k\right\}_{k\geq0}$ is equivalent to the  unit vector basis in $\ell_q.$

It follows from the triangle inequality of $L_{2,q}(\cM,\tau)$  that for any $n\ge 0$, we have 
\begin{align*}
 \int_0^1\left\| \sum_{k=0}^n(k+1)^{-\frac12}r_k(t)v_k\right \|_{L_{2,q}(\cM,\tau)}dt &\leq\int_0^1\left \|\sum_{k=0}^n(k+1)^{-\frac12}r_k(t)y_k\right\|_{L_{2,q}(\cM,\tau)}dt\nonumber \\
&~\quad  +\int_0^1\left\|
\sum_{k=0}^n(k+1)^{-\frac12}r_k(t)u_k\right\|_{L_{2,q}(\cM,\tau)}dt.
\end{align*}
 We have 
\begin{align*}\int_0^1\left \|\sum_{k=0}^n(k+1)^{-\frac12}r_k(t)y_k\right\|_{2,q}dt&\stackrel{\eqref{vector-valued equivalence}}{\leq} C(T)\left \|\left\{(k+1)^{-\frac12}\right\}_{k=0}^n\right\|_{\ell_{2,q} }\\
&\stackrel{\eqref{lpqsequence}}{=}C(T) \left(
\sum_{k=0}^n(k+1)^{-\frac{q}{2}}\cdot(k+1)^{\frac{q}{2}-1}\right)^{\frac1q}\\
&~=~C(T) \left(
\sum_{k=0}^n(k+1)^{-1}\right)^{\frac1q}=O(\log^{\frac1q}(n)).
\end{align*}
By Theorem \ref{vector vs tensor}, we have
\begin{align*}
\int_0^1\left\|\sum_{k=0}^n(k+1)^{-\frac12}r_k(t)u_k\right\|_{L_{2,q}( \cM )}dt &~\leq c_{p,q}\left\|\sum_{k=0}^n(k+1)^{-\frac12}r_k\otimes u_k\right\|_{L_{2,q}(L_\infty \overline{\otimes} \cM )}\\
&~=c_{p,q}\left\|\sum_{k=0}^n(k+1)^{-\frac12}r_k\otimes i_k(u)\right\|_{L_{2,q}(L_\infty \overline{\otimes} \cM )}\\
&\stackrel{\eqref{estiAn}}{=}c_{p,q}\left\|A_n(u)\right\|_{L_{2,q}(L_\infty \overline{\otimes} \cM )}\\
&~\leq ~ c_{p,q}\left\|A_n\right \|_{L_{2,q}(0,1)\to _{L_{2,q}(L_\infty \overline{\otimes} \cM )}}\cdot \left\|u\right\|_{L_{2,q}(0,1)}\\
& \stackrel{L.\ref{an l2q noncomm}}{=}  c_{abs}\log^{\frac1q}(en) \cdot \norm{u}_{L_{2,q}(0,1)}.
\end{align*}
It follows that
$$\int_0^1\Big\|\sum_{k=0}^n(k+1)^{-\frac12}r_k(t)v_k\Big\|_{2,q}dt=O(\log^{\frac1q}(n)).$$
However, since $\{v_k\}_{k\geq0}$ is equivalent to the  unit vector basis in $\ell_q$, it follows that  
\begin{align*}
\int_0^1\left\|\sum_{k=0}^n(k+1)^{-\frac12}r_k(t)v_k\right\|_{L_{2,q}(\cM,\tau)}dt&=
\left\|\sum_{k=0}^n(k+1)^{-\frac12}v_k\right\|_{L_{2,q}(\cM,\tau)}\\
&\approx\left\|\Big\{(k+1)^{-\frac12}\Big\}_{k=0}^n\right\|_{\ell_q}.
\end{align*}
Thus,
$$n^{\frac1q-\frac12}\approx\Big\|\Big\{(k+1)^{-\frac12}\Big\}_{k=0}^n\Big\|_q= O(\log^{\frac1q}(n)),$$
which is impossible. 
This contradiction shows that $v_k =0$ for all $k\geq0$.

{\bf Step 2:} Now, suppose that \eqref{case1} holds, i.e.,  $\{v_k\}_{k\geq0}=0;$ thus, $y_k=u_k.$ It follows from \eqref{vector-valued equivalence} that
\begin{align}\label{second vector-valued equivalence}
\int_0^1\left\|\sum_{k=0}^n(k+1)^{-\frac12}r_k(t)u_k\right\|_{L_{2,q}(\cM,\tau)}dt&\approx \norm{\sum_{k=0}^n(k+1)^{-\frac12} e^{\ell_{2,q}}_k  }_{\ell_{2,q}}\nonumber  \\
&\approx
\left(
\sum_{k=0}^n(k+1)^{-\frac{q}{2}}\cdot(k+1)^{\frac{q}{2}-1}\right)^{\frac1q} \\
&\approx 
\log^{\frac1q}(n).\nonumber
\end{align}
However, using Theorem \ref{vector vs tensor} again, we obtain that
\begin{align*}
\int_0^1\left\| \sum_{k=0}^n(k+1)^{-\frac12}r_k(t)u_k\right\|_{L_{2,q}(\cM,\tau)}dt&~\leq~ c_{abs}\left\|\sum_{k=0}^n(k+1)^{-\frac12}r_k\otimes u_k\right\|_{L_{2,q}(L_\infty \overline{\otimes} \cM )}\\
&~=~\left\| \sum_{k=0}^n(k+1)^{-\frac12}r_k\otimes i_k(u)\right\|_{L_{2,q}(L_\infty \overline{\otimes} \cM )}\\
&~=~\left\|A_n(u)\right\|_{L_{2,q}(L_\infty \overline{\otimes} \cM )}\\
&\stackrel{L.\ref{anx bound noncomm}}{=}o(\log^{\frac1q}(n)).
\end{align*}
This contradicts \eqref{second vector-valued equivalence} and we complete the proof for the case when  $p=2,$ $1\leq q<2$.
\end{proof}

\subsection{Proof of Theorem \ref{main theorem} for  case   $p=2,$ $q>2$}
\begin{proof} We either have that $\{v_k\}_{k\geq0}$ is equivalent to a unit vector basis in $\ell_q$ or that $v_k=0,$ $k\geq0.$

It follows from the triangle inequality of $L_{2,q}(\cM,\tau)$  that
\begin{align*}
&~\quad \sup_{t\in(0,1)}\left\|\sum_{k\geq0}\alpha_kr_k(t)u_k\right \|_{L_{2,q}(\cM,\tau)}\\
&\leq\sup_{t\in(0,1)}\left\|\sum_{k\geq0}\alpha_kr_k(t)y_k\right\|_{L_{2,q}(\cM,\tau)}+\sup_{t\in(0,1)}\left\|\sum_{k\geq0}\alpha_kr_k(t)v_k\right\|_{L_{2,q}(\cM,\tau)}.
\end{align*}
By \eqref{uniform vector-valued equivalence}, we have $\sup_{t\in(0,1)}\left\|\sum_{k\geq0}\alpha_kr_k(t)y_k\right\|_{L_{2,q}(\cM,\tau)}\approx \left\|\alpha\right \|_{\ell_{2,q} }.$ 
Passing to a subsequence of $\{v_k\}_{k\ge 0}$ if necessary, 
$$\sup_{t\in(0,1)}\left\|\sum_{k\geq0}\alpha_kr_k(t)v_k\right\|_{L_{2,q}(\cM,\tau)} =\left\| \sum_{k\geq0}\alpha_kv_k\right\|_{L_{2,q}(\cM,\tau)}\lesssim \left\|\alpha\right\|_{\ell_q}.$$
Since $\norm{\cdot}_{\ell_q}\le \norm{\cdot}_{\ell_{2,q}}$\cite[p.217]{BS}, it follows that 
$$\sup_{t\in(0,1)}\left\|\sum_{k\geq0}\alpha_kr_k(t)u_k
\right\|_{L_{2,q}(\cM,\tau)} \lesssim \left\|\alpha\right\|_{\ell_{2,q}}+\left\|\alpha\right\|_q\lesssim \left\|\alpha\right\|_{\ell_{2,q}}.$$
By Theorem \ref{vector vs tensor} (b), we have
\begin{align}\label{lastpara}
\left\|\sum_{k\geq0}\alpha_kr_k\otimes u_k\right\|_{L_{2,q}(L_\infty(0,1)\overline{\otimes} \cM)}\lesssim \left\|\alpha\right\|_{\ell_{2,q}}.
\end{align}

Recall that  $u_k=i_k(u),$ $k\geq0.$ It follows from the preceding paragraph that
\begin{align*}
\left\|A_n(u)\right\|_{L_{2,q}(L_\infty \overline{\otimes} \cM )}&~=~\left\| \sum_{k=0}^n(k+1)^{-\frac12}r_k\otimes i_k(u)\right\|_{L_{2,q}(L_\infty \overline{\otimes} \cM )}\\
&~=~\left\| \sum_{k=0}^n(k+1)^{-\frac12}r_k\otimes  u_k  \right\|_{L_{2,q}(L_\infty \overline{\otimes} \cM )}\\
&\stackrel{\eqref{lastpara}}{\lesssim} \left\|\Big\{(k+1)^{-\frac12}\Big\}_{k=0}^n\right\|_{\ell_{2,q} }\stackrel{\eqref{second vector-valued equivalence}}{=}O(\log^{\frac1q}(n)).
\end{align*}
Recall that $L_{2,q'}$ is the Banach dual of $L_{2,q},$ $\frac1q+\frac1{q'}=1$ (with equivalent norms)\cite[Corollary 4.8]{BS}. We obtain that 
\begin{align}\label{sgn1}
(m\otimes \tau)\left(A_n(u)\cdot A_n({\rm sgn}(u)) \right) &\leq c_q \left\|A_n(u)\right\|_{L_{2,q}(L_\infty \overline{\otimes} \cM )} \left\|A_n({\rm sgn}(u))\right\|_{L_{2,q'}(L_\infty \overline{\otimes} \cM )}\nonumber \\
&\stackrel{L. \ref{anx bound noncomm}}{=}O(\log^{\frac1q}(n))\cdot o(\log^{\frac1{q'}}(n))=o(\log(n)).
\end{align}
On the other hand,
by the definition of $A_n$ (see \eqref{estiAn}), 
 we have
\begin{align*}
A_n(u)\cdot A_n({\rm sgn}(u))
&=
 \left(\sum_{k=0}^n(k+1)^{-\frac12}r_k\otimes i_k(u) \right) \cdot  \left(\sum_{k=0}^n(k+1)^{-\frac12}r_k\otimes i_k({\rm sgn}(u)) \right)\\&
=\sum_{k,l=0}^n(k+1)^{-\frac12}(l+1)^{-\frac12}r_kr_l\otimes i_k(u)i_l({\rm sgn}(u) ).
\end{align*}
Therefore,
$$(m\otimes \tau)\left(A_n(u)\cdot A_n({\rm sgn}(u)) \right) =\left(\sum_{k=0}^n\frac1{k+1}\right)\left\|u\right\|_{L_1(0,1)}  =O(\log(n)),$$
which contradicts \eqref{sgn1} whenever $u\ne 0$. Hence, $u=0$.

Since $u=0$, it follows that  $u_k=0$ for $k\geq0;$ thus, $y_k=v_k,$ $k\geq0.$ In particular, 
$$\int_0^1\left\|\sum_{k\geq0}\alpha_kr_k(t)y_k\right\|_{L_{2,q}(\cM,\tau)}dt=\int_0^1 \left\|\sum_{k\geq0}\alpha_kr_k(t)v_k\right\|_{L_{2,q}(\cM,\tau)}dt.$$
It follows  from \eqref{case2} that  
$$\int_0^1 \left\|\sum_{k\geq0}\alpha_kr_k(t)v_k\right\|_{L_{2,q}(\cM,\tau)}dt=\left\|
\sum_{k\geq0}\alpha_kv_k\right\|_{L_{2,q}(\cM,\tau)} \approx  \left\|\alpha\right\|_{\ell_q}.$$
It follows from \eqref{vector-valued equivalence} that
$$\left\|\alpha\right\|_{\ell_{2,q}}\approx\int_0^1 \left\|\sum_{k\geq0}\alpha_kr_k(t)y_k\right\|_{L_{2,q}(\cM,\tau)}dt=\int_0^1 \left\|\sum_{k\geq0}\alpha_kr_k(t)v_k\right\|_{L_{2,q}(\cM,\tau)}dt \approx \left\|\alpha\right\|_{\ell_q},$$
which is impossible\cite[p.217]{BS}. 
This completes the proof. 
\end{proof}

\section{Applications: towards the isomorphic classification of noncommutative spaces $L_{p,q}$ on von Neumann algebras of type $I$}\label{application}
% \subsection{Isomorphic classification}
In this section, we present the proof of Hasse diagram given in the Introduction.

If $\cM=B(\cH)$ and $\tau$ is the standard trace, then we denote $L_{p,q}(\cM,\tau)$ by $C_{p,q}$.
If $\cM=\oplus_{1\le n<\infty}\mathbb{M}_n$ equipped with the trace $\oplus_{1\le n<\infty}{\rm Tr}_n$, where ${\rm Tr}_n$ is the standard trace on the space of $n\times n$ matrices, then we denote 
$$S_{p,q}:= L_{p,q}(\cM,\tau).$$
It is well-known that 
$$S_{p,q}\approx L_{p,q}(\cM,\tau)$$
for any $\cM=\oplus_{1\le n<\infty}\mathbb{M}_n\oplus \cA_n$, where $\cA_n$ is an atomic commutative  von Neumann algebra with all atoms in $\cM$ having the same trace,  and for any $N>0$, there exists an $n>N$ such that $\cA_n\ne \{0\}$ \cite[p.335]{Arazy81}.

By \cite[Lemma 2.5]{AHS}, we have  $C_{p,q}\approx  T_{p,q}:=\{x\in C_{p,q}: x_{ij}=0,i>j\}$, $1<p<\infty$, $1\le q<\infty $. 
The lack of isomorphic embeddings of $L_{p,q}(0,1)$ into $C_{p,q}$ was discussed in \cite{AHS}. However, the case when $p>2$, $q\in [1,2]\cup (p,\infty)$ remains open. Below, we show the lack of isomorphic embedding of $L_{p,q}(0,\infty)$ into $C_{p,q}$.  
 
\begin{prop}\label{LpqintoCpq}
Let $p>2$. Then, $$U_{p,q}\not\hookrightarrow C_{p,q}.$$
Consequently, $L_{p,q}(0,\infty)\not\hookrightarrow C_{p,q}$ for all $(p,q)\in (1,\infty)\times [1,\infty)$, $(p,q)\ne (2,2)$. 
\end{prop}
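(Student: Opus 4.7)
I would argue by contradiction: suppose $T:U_{p,q}\to C_{p,q}$ is an isomorphic embedding (with $p>2$).  The key difficulty is that $\ell_{p,q}$ embeds trivially into $C_{p,q}$ as the diagonal subalgebra, so the embedding of the complemented $\ell_{p,q}$-copy of $U_{p,q}$ provided by the first Hasse diagram in the Introduction yields no immediate contradiction with Theorem~\ref{main theorem}.  I must therefore exploit the finer atomic structure of $U_{p,q}$ rather than just its $\ell_{p,q}$-skeleton.

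First, I would pass from $B(\cH)$ to $B(\cH)\,\overline{\otimes}\,L_{\infty}(0,1)$ (as in the proof of Theorem~\ref{main theorem}) so that the ambient von Neumann algebra becomes atomless, and then apply the subsequence splitting Theorem~\ref{subsequence splitting property} to the image sequence $\{T(\chi_{A_n})\}_{n\ge 0}$ of the atomic basis of $U_{p,q}$.  After passing to a subsequence this gives $T(\chi_{A_n})=u_n+v_n+w_n$ with $\{u_n\}$ equimeasurable, $\{v_n\}$ pairwise orthogonal, and $w_n\to 0$ in $C_{p,q}$.  The goal is to show $\{v_n\}$ is equivalent to the $\ell_q$-basis (by Lemma~\ref{2.1}) while $\{u_n\}$ must essentially vanish, so that $\{T(\chi_{A_n})\}$ is $\ell_q$-like and cannot reproduce the $U_{p,q}$-structure of $\{\chi_{A_n}\}$.

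The crux is excluding the equimeasurable part $\{u_n\}$.  Here I would invoke Theorem~\ref{sequence_com_nc} (available precisely because $p>2$): since $\ell_{r}\subset\ell_{p,q}\subset\ell_\infty$ and $L_\infty(0,1)\subset L_{p,q}(0,1)\subset L_{r}(0,1)$ for any $2<r<p$, combined with $\ell_{p,q}\not\hookrightarrow L_{p,q}(0,1)$ of \cite{KS,SS}, one concludes $\ell_{p,q}\not\hookrightarrow L_{p,q}(\cN,\sigma)$ for every noncommutative probability space $(\cN,\sigma)$.  Because the $u_n$'s share a fixed singular value function $\mu(u_1)\in L_{p,q}(0,\infty)$, this function is supported on a set of $\sigma$-finite measure, so one may locate $\{u_n\}$ (up to unitary conjugation) inside a corner of $C_{p,q}$ whose trace can be normalized to $1$; this produces a copy of $\ell_{p,q}$ inside $L_{p,q}(\cN,\sigma)$ for a probability $\cN$, contradicting the above.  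Once $u_n$'s are gone, the mismatch between $\|\sum \alpha_n \chi_{A_n}\|_{U_{p,q}}$ and $\|\sum \alpha_n v_n\|_{C_{p,q}}\sim\|\alpha\|_{\ell_q}$ for varying $\{\alpha_n\}$ yields the required contradiction via the Khinchine-type estimates of Theorems~\ref{vector vs tensor} and~\ref{lust-piquard}.

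The main obstacle is the equimeasurable case: because $C_{p,q}$ lives on the semifinite algebra $B(\cH)$ and not on a probability space, one cannot directly apply Theorem~\ref{main theorem} to the support of $\{u_n\}$, and a careful extraction of a finite-trace corner capturing their common singular value function is required before Theorem~\ref{sequence_com_nc} can be invoked.  For the ``consequently'' statement, the case $p>2$ is immediate from $U_{p,q}\hookrightarrow L_{p,q}(0,\infty)$ (first Hasse diagram), which reduces $L_{p,q}(0,\infty)\hookrightarrow C_{p,q}$ to the main assertion; the remaining cases $(p,q)\neq(2,2)$ with $p\le 2$ would be obtained by the same subsequence-splitting strategy, using Theorem~\ref{main theorem} in place of Theorem~\ref{sequence_com_nc} to handle the equimeasurable piece on a suitable finite-trace corner.
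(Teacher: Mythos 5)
Your overall shape (contradiction, split the image of the atomic basis into an equimeasurable part and an orthogonal part, show the orthogonal part is $\ell_q$-like and clashes with the $\ell_{p,q}$-structure of $U_{p,q}$) is reasonable, but the step on which everything hinges --- killing the equimeasurable part --- has a genuine gap, and it is not how the paper argues. Equimeasurability of the $u_n$ gives you a single function $\mu(u_1)\in L_{p,q}(0,\infty)$, but it gives no control whatsoever on the trace of the \emph{joint} support of the sequence: even if each $u_n$ has finite-trace support, infinitely many equimeasurable operators sitting in different positions of $B(\cH)\overline{\otimes}L_\infty(0,1)$ will in general occupy a corner of infinite trace, so there is no finite-trace corner that can be "normalized to $1$" to place $\{u_n\}$ inside a noncommutative probability space. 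Without that reduction, neither Theorem \ref{main theorem} nor Theorem \ref{sequence_com_nc} applies. Moreover, even granting such a corner, your claim that the $u_n$'s "produce a copy of $\ell_{p,q}$" is unsubstantiated: equimeasurability says nothing about the span of $\{u_n\}$, and in the proofs of Theorem \ref{main theorem} the elimination of $u$ is never achieved by exhibiting an $\ell_{p,q}$-copy --- it is done by norm asymptotics (Kahane/Khinchine estimates), and those proofs start from a sequence equivalent to the $\ell_{p,q}$-\emph{basis}, which the atomic basis $\{e_j^n\}$ of $U_{p,q}$ is not (only each fixed-$n$ slice $\{e_j^n\}_j$ is, as is the sequence of unit-measure sums $\sum_{j\le k}\chi_{A_j^k}$ across $k$). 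Your final "mismatch" step also leaves unspecified which coefficients produce the contradiction, and Lemma \ref{2.1} needs the decreasing rearrangements of the relevant normalized vectors to tend to $0$ pointwise --- a property that must be arranged, not assumed.

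The paper's proof runs along a different and more hands-on route that you should compare with. It works in the upper-triangular model $T_{p,q}\approx C_{p,q}$ and uses the row projections $E_k$: the key claim is $\|E_kT(e_i^n)\|_{C_{p,q}}\to0$ as $i\to\infty$, proved by noting that each row $(E_j-E_{j-1})C_{p,q}$ is isometric to $\ell_2$, so a failure of this decay would force $\|\alpha\|_{\ell_{p,q}}\gtrsim\|\alpha\|_{\ell_2}$ --- impossible precisely because $p>2$ (this is the only place $p>2$ enters, not through Theorem \ref{sequence_com_nc}). A gliding-hump argument then makes the images of a subsequence of the basis live on pairwise disjoint diagonal blocks, hence mutually orthogonal; the images of the unit-measure sums $\sum_{j\le k}\chi_{A_j^k}$ are then shown to tend to $0$ in operator norm, so Lemma \ref{2.1} (via \cite[Proposition 3.3]{HSS}) makes them equivalent to the $\ell_q$-basis, contradicting their $1$-equivalence to the $\ell_{p,q}$-basis in $U_{p,q}$. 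Finally, for the "consequently" part with $p\le2$, your plan to rerun the splitting strategy does not work either (Theorem \ref{main theorem} concerns probability spaces, and $C_{p,q}$ is not one); the paper instead quotes \cite[Theorem 6.2]{AHS} that $L_{p,q}(0,1)\not\hookrightarrow C_{p,q}$ and uses complementation of $L_{p,q}(0,1)$ in $L_{p,q}(0,\infty)$.
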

\begin{proof}
Assume 
 that there exists an isomorphism $T$ from $U_{p,q}$ into $T_{p,q}=\{x\in C_{p,q}: x_{ij}=0,i>j\}$,
which is isomorphic to  $ C_{p,q}$. 
Without loss of generality, we may assume that 
$\{A_n\}=\cup_n B_n$ and 
$U_{p,q}=L_{p,q}\{ \{ A_n\}_{n\ge 1}\}$, where for each $n\ge 1$,     $B_n=\{A_k^n\}_{k=1}^\infty$ is    a set of infinite many atoms of measure $\frac1n$.
Define $\{e_{j}^n\}$ be the normalized basis of the characteristic functions $\chi_{_{A_j^n}}$'s.
We have  $e^n_j = \frac{1}{\norm{\chi_{_{A_j^n}}}_{U_{p,q}}}\chi_{_{A_j^n}}$ and  $\norm{\chi_{_{A_j^n}}}_{U_{p,q}}=\norm{\chi_{(0,\frac1n)}}_{L_{p,q}(0,\infty)}=\frac{1}{n^{1/p}}$.

Let $\{e_{ij}\}_{i,j\ge 1}$ be the unit matrix basis of $C_{p,q}$. 
Denote $E_n=\sum_{i=1}^n e_{ii}=\begin{pmatrix}1 & 0 &0&  \cdots\\
0&1 & 0&\cdots\\
0&0 & 1 &\cdots\\
\vdots& \vdots&\vdots&\ddots\\
\end{pmatrix}={\rm diag }(\underbrace{1,\cdots,1}_{n~terms},0,0,\cdots)$ .

We claim that 
for any fixed $k>0$\footnote{$E_k x$ is the restriction of $x$ on the first $k$ rows, and 
$ xE_k$ is the restriction of $x$ on the first $k$ columns.} and any fixed $n\ge 1$, 
\begin{align}\label{disappear}
\norm{E_{k} T(e_i^n) }_{C_{p,q}}\to 0\mbox{ as }i\to \infty.
\end{align}
Indeed, assume, by way of  contradiction, that (passing to a subsequence if necessary) that
there exists $n\ge 1$ such that 
 $\norm{E_{k} T(e_i^n) }_{C_{p,q}}>\delta >0$ for all $j\ge 2$.

Let $(\varepsilon_1,\varepsilon_2,\cdots)$ a decreasing sequence of positive numbers. 
Since $C_{p,q}$ is separable, it follows that there exists $m_1$ such that 
$$\norm{E_k T(e_1^n)E_{m_1} -E_k T(e_1^n)}_{C_{p,q}}< \varepsilon_1\mbox{ and }\norm{E_k T(e_1^n)E_{m_1} }_{C_{p,q}}> \frac{\delta}{2}.$$
Since $e_j^n\to 0$ weakly as $j\to \infty $ and $E_k C_{p,q}E_{m_1}$ is finite-dimensional, it follows that there exists $j_2$ such that 
$$\norm{E_k  T(e_{j_2}^n) E_{m_1}}_{C_{p,q}}  <\frac{\varepsilon_2}{2}.$$
Again, since $C_{p,q}$ is separable, it follows that there exists $m_2$ such that 
\begin{align*}&~\quad \norm{E_k T(e_{j_2}^n)(E_{m_2} -E_{m_1}) - E_k  T(e_{j_2}^n)}_{C_{p,q}}\\
&\le 
\norm{E_k T(e_{j_2}^n)E_{m_1}}_{C_{p,q}}+\norm{E_k T(e_{j_2}^n) E_{m_2} -E_k  T(e_{j_2}^n)}_{C_{p,q}}< \varepsilon_2
\end{align*}
and 
$$\norm{E_k T(e_{j_2}^n)(E_{m_2}-E_{m_1}) }_{C_{p,q}}>\frac\delta2.$$
Arguing inductively, we obtain an increasing sequence $\{j_i\}_{i\ge 0}$ (denote $j_0=0$ and $j_1=1$)  such that 
\begin{align}\label{epsiloni}
&~\quad \norm{E_k T(e_{j_2}^n)(E_{m_i} -E_{m_{i-1}})- E_kT(e_{j_2}^n)}_{C_{p,q}}< \varepsilon_i
\end{align}
and 
\begin{align}\label{EKT}
\norm{E_k T(e_{j_i}^n)(E_{m_i} -E_{m_{i-1}})  }_{C_{p,q}}>\frac\delta2.
\end{align}
To lighten the notions, we may replace $\left\{e_{j_i}^n\right\}_{i\ge 1}$ with $\left\{e_i^n\right\}_{i\ge 1}$. 
Passing to a subsequence if necessary, there exists $1\le j\le k$ such that 
\begin{align}\label{EJEJ}
 \frac{\delta}{2k} \stackrel{\eqref{EKT}}{<}
\norm{(E_j-E_{j-1}) T(e_{i}^n)(E_{m_i} -E_{m_{i-1}})  }_{C_{p,q}}\le \norm{T}.
\end{align}
Noting that $(E_j-E_{j-1})C_{p,q}$ is isometric to $\ell_2$, we obtain that for any $a=(a_1,a_2,\cdots)$,  
\begin{align*}
\frac{\delta}{2k}\norm{a}_{\ell_2}
&~\le  \norm{(E_j-E_{j-1}) \sum_{i\ge 1} a_i T(e_{i}^n)(E_{m_i} -E_{m_{i-1}})   }_{C_{p,q}}\\
 &~\le  \norm{E_k\sum_{i\ge 1} a_i T(e_{i}^n)(E_{m_i} -E_{m_{i-1}}) }_{C_{p,q}}\\
 &~\le \sum_{j=1}^k  \norm{ (E_j-E_{j-1}) \sum_{i\ge 1} a_i T(e_{i}^n)(E_{m_i} -E_{m_{i-1}}) }_{C_{p,q}}\\
 &~=  \sum_{j=1}^k \left(
\sum_{i\ge 1} \norm{ (E_j-E_{j-1})  a_i T(e_{i}^n)(E_{m_i} -E_{m_{i-1}}) }_{C_{p,q}}^2\right)^{1/2}\\
&\stackrel{\eqref{EJEJ}}{=}  \sum_{p=1}^k \left(
\sum_{i\ge 1} a_i^2  \norm{   T  }^2\right)^{1/2}\\
&~\le k  \norm{T}\norm{a}_{\ell_2} .
\end{align*}
Therefore, by \cite[Theorem 1.3.9]{AK}, for suitable choice of $(\varepsilon_1,\varepsilon_2,\cdots)$, we obtain that
$$\{E_k T(e_{i}^n) \}_{i\ge 1}\stackrel{\eqref{epsiloni}}{\sim}\{E_k T(e_{i}^n)(E_{m_i} -E_{m_{i-1}})\}_{i\ge 1}\sim \{e^{\ell_2}_i\}_{i\ge 1}.$$
%Hence, 
%without loss of generality, we may assume that $T(e_i^n)$, $ i\ge 1$, are disjointly supported from the right. 
 For any  sequence $\alpha:=(\alpha_1,\cdots, \alpha_m,0,\cdots)$, we have 
$$ \frac{1}{n^{1/p}} \norm{\alpha}_{\ell_{p,q}}= \norm{\sum_{i=1}^m \alpha_i e_i^n }_{U_{p,q}}
\sim \norm{T(\sum_{i=1}^m \alpha_i e_{i}^n ) }_{C_{p,q}}
\ge
 \norm{E_{k} T(\sum_{i=1}^m \alpha_i e_{i}^n ) }_{C_{p,q}}
\sim 
\norm{\alpha}_{\ell_2},$$
which is impossible whenever $p>2$.

Let $x_1^1 = T(e_1^1)$. 
Since $C_{p,q}$ is separable, we may assume that  $x_{1}^1 =   x_1^1 E_{n_1}$ for some $n_1^{(1)}:=n_1>0$.

Now, we consider $\{e_j^2\}_{j\ge 1}$. 
Since $\{e_j^2\}_{j\ge 1}$ is a weakly null sequence, it follows from \eqref{disappear} and \cite[Theorem 1.3.9]{AK} that  we may assume that there exist
$e_{j_1^{(2)}}^2, e_{j_2^{(2)}}^2$, $n_2^{(2)}>n_1^{(2)}>n_1^{(1)}$    
such that 
$$T(e_{j_1^{(2)}}^2)= x_{1}^2 = (E_{n_1^{(2)}}-E_{n_1})  x_1^2 (E_{n_1^{(2)}}-E_{n_1}) $$
and  
$$T(e_{j_2^{(2)}}^2)= x_{2}^2 = (E_{n_2^{(2)}}-E_{n_2^{(2)}})  x_2^2 (E_{n_2^{(2)}}-E_{n_2^{(2)}}) .$$

Constructing inductively, for each $k$, we have 
\begin{align}\label{disjoint}
T(e_{j_1^{(k)}}^k)= x_{1}^k =  (E_{n_1^{(k)}}-E_{n_{k-1}^{(k-1)}})    x_1^k (E_{n_1^{(k)}}-E_{n_{k-1}^{(k-1)}}) , \nonumber\\
T(e_{j_2^{(k)}}^k)= x_{2}^k = (E_{n_2^{(k)}}-E_{n_1^{(k)}})  x_2^k (E_{n_2^{(k)}}-E_{n_1^{(k)}}),\nonumber\\
 \cdots  ,\nonumber\\
T(e_{j_k^{(k)}}^k)= x_{k}^k =  (E_{n^{(k)}}-E_{n_{k-1}^{(k)}}) x_k^k (E_{n^{(k)}}-E_{n_{k-1}^{(k)}})
.\end{align}
This shows that 
$$\{e_1^1,e^2_1,e^2_2,\cdots, e_1^k,\cdots, e_k^k,\cdots \}$$
is equivalent to a sequence $
\{T(e_1^1),T(e^2_1),T(e^2_2),\cdots, T(e_1^k),\cdots, T(e_k^k),\cdots \}
$ of   mutually orthogonal self-adjoint elements in $T_{p,q}$.
Moreover, it is isometrically isomorphic to a sequence of disjointly supported elements in $\ell_{p,q}$\cite[Proposition 3.3]{HSS}.
However, since $\norm{T(e^n_j)}_{B(\cH)}\le \norm{T(e^n_j)}_{C_{p,q}}\le \norm{T}$ 
and $\chi_{_{A^k_j}}=\norm{\chi_{(0,\frac1k)}}_{L_{p,q}(0,1)} e_j^k$, it follows from \eqref{disjoint} that
\begin{align*}
\norm{T\left(
\sum_{j=1}^k \chi_{_{A_j^k}}\right) }_{B(\cH)}  &=\norm{ \norm{\chi_{(0,\frac1k)}}_{L_{p,q}(0,1)} T\left(
\sum_{j=1}^k e_j^k\right)}_{B(\cH)} \\
&=  
\norm{\chi_{(0,\frac1k)}}_{L_{p,q}(0,1)}\max_{j=1}^k \norm{  T\left(
  e_j^k\right)}_{B(\cH)} \\
&\le \norm{\chi_{(0,\frac1k)}}_{L_{p,q}(0,1)}\norm{T}
\to 0
\end{align*}
as  $n\to \infty$.
By Lemma \ref{2.1} and \cite[Proposition 3.3]{HSS}, passing to a subsequence if necessary, 
$\left\{
T\left(
\sum_{j=1}^k \chi_{_{A_j^k}}\right)  \right\}_{k\ge 1}$ in $C_{p,q}$ is equivalent to the natural vector basis of $\ell_q$.
However,since the measure of $A_j^k$ is $\frac{1}{k}$, it follows that $\cup_{j=1}^k A_{j}^k$ has measure $1$. Therefore, 
$\left\{\sum_{j=1}^k \chi_{_{A_j^k}}\right\}_{k\ge 1}$ in $U_{p,q}$ is $1$-equivalent to the natural vector basis of $\ell_{p,q}$, which is a contradiction.

Note that $U_{p,q}$ is a complemented subspace of $L_{p,q}(0,\infty)$\cite{HS21}.
If $p>2$, then $L_{p,q}(0,\infty)\not\hookrightarrow C_{p,q}$ follows immediately from the first assertion.  
When $p\le 2$, $(p,q)\ne (2,2)$, it was shown in 
 \cite[Theorem 6.2]{AHS} that $L_{p,q}(0,1)\not\hookrightarrow C_{p,q}$.
Since $L_{p,q}(0,1)$ is a complemented subspace of $L_{p,q}(0,\infty)$, 
it follows that $L_{p,q}(0,\infty )\not\hookrightarrow C_{p,q}$.
The proof is complete. 
\end{proof}
%{\color{blue}The following is not about Hasse diagram. After 5.2, there are two more results, which complete the proof of the tree in the introduction. }
 \begin{corollary}\label{cor:app}
  Let $1<p<\infty$, $1\le q<\infty$, $p\ne q$, and let $\cM$ be a type $I$  von Neumann algebra on a separable Hilbert space equipped with a semifinite faithful normal trace $\tau$.
  If $\cM$ is atomless or atomic with all atoms having the same trace, then 
  \begin{enumerate}
    \item[(a)] if  $\cM $ is $*$-isomorphic to $\oplus_{1\le n<\infty }(\mathbb{M}_n \otimes A_n)$, where for any $N>0$, there exists $n>N$ such that  $A_n$ is a non-trivial  commutative atomless von Neumann algebra for any $n\ge 1$,
then   
        the Banach space structure of  $L_{p,q}(\cM,\tau)$ depends on the choice  of the trace $\tau$  on  $\cM =\oplus_{1\le n<\infty }(\mathbb{M}_n \otimes A_n)$;  
    \item[(b)] if the underlying algebra $\cM $ is not $*$-isomorphic to $\cM =\oplus_{1\le n<\infty }(\mathbb{M}_n \otimes A_n)$, where for any $N>0$, there exists $n>N$ such that  $A_n$ is a non-trivial  commutative atomless von Neumann algebra for any $n\ge 1$, then 
  $L_{p,q}(\cM,\tau)$ is isomorphic to one of the following mutually non-isomorphic Banach space:
  $$\ell_{p,q}^n,~ 0\le n\le \infty,  ~L_{p,q}(0,1), ~L_{p,q}(0,\infty), ~ S_{p,q}, ~C_{p,q}, 
~L_{p,q}(B(\cH )\otimes L_\infty (0,1)).$$
  \end{enumerate}
   
\end{corollary}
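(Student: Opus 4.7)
The plan is to combine the structure theorem for type $I$ von Neumann algebras with Theorem \ref{main theorem} and Proposition \ref{LpqintoCpq}. On a separable Hilbert space any type $I$ algebra is $*$-isomorphic to a direct sum $\bigoplus_{n \in I}(\mathbb{M}_n \otimes Z_n)$ with $I \subseteq \{1, 2, \ldots, \infty\}$ and each $Z_n$ abelian. The standing hypothesis that $\cM$ is atomless or atomic-with-equal-atoms forces each nonzero $Z_n$ to be either purely atomless or purely atomic with atoms of a common measure, so the structural bookkeeping is controlled.

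For part (b), I would proceed by cases on this decomposition, using the assumption that it is \emph{not} of the offending shape in (a). If only finitely many matrix summands appear with atomless abelian coefficients, then because $L_{p,q}(\mathbb{M}_n \otimes L_\infty(0,\alpha))$ is isomorphic to $L_{p,q}(0,\alpha)$ with constants depending on $n$, the resulting space collapses to $L_{p,q}(0,1)$ or $L_{p,q}(0,\infty)$. If the decomposition contains $B(\cH)$ with trivial center component, one obtains $C_{p,q}$; if it contains $B(\cH) \otimes L_\infty(0,1)$ one obtains the last space on the list. Purely atomic cases with infinitely many matrix summands yield $S_{p,q}$ by the theorem of Arazy quoted immediately after the definition of $S_{p,q}$, and purely commutative atomic cases yield $\ell_{p,q}^n$ for some $0 \le n \le \infty$. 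Mutual non-isomorphism of the listed spaces would then be verified using the Hasse diagram of the Introduction together with Proposition \ref{LpqintoCpq} and Theorem \ref{main theorem}, which rule out respectively $L_{p,q}(0,\infty) \hookrightarrow C_{p,q}$ and $\ell_{p,q} \hookrightarrow L_{p,q}(\cM,\tau)$ whenever $(\cM,\tau)$ is a probability space.

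For part (a), I would exhibit two inequivalent faithful normal traces $\tau$ and $\tau'$ on a fixed $\cM = \bigoplus_{1 \le n < \infty}(\mathbb{M}_n \otimes A_n)$ with infinitely many $A_n$ nontrivial atomless, such that $L_{p,q}(\cM,\tau) \not\approx L_{p,q}(\cM,\tau')$. Rescaling so that $\tau'(1_{\mathbb{M}_n \otimes A_n})$ grows rapidly creates a complemented copy of $L_{p,q}(0,\infty)$ inside $L_{p,q}(\cM,\tau')$, while renormalising so that $\sum_n \tau(1_{\mathbb{M}_n \otimes A_n}) < \infty$ produces a probability space. Theorem \ref{main theorem} then distinguishes the two, since $\ell_{p,q}$ embeds into the former but not into the latter.

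The principal technical obstacle will be the fine-grained mutual non-isomorphism claims in (b). Ruling out, for instance, $S_{p,q} \approx C_{p,q}$, or $L_{p,q}(B(\cH) \otimes L_\infty(0,1)) \approx C_{p,q} \oplus L_{p,q}(0,\infty)$, requires locating in one side a subspace that is obstructed from embedding into the other; here Theorem \ref{main theorem} (no $\ell_{p,q}$ in a probability $L_{p,q}$) and Proposition \ref{LpqintoCpq} (no $L_{p,q}(0,\infty)$ in $C_{p,q}$), together with the Hasse diagram, furnish the needed obstructions.
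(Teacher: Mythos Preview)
Your structural outline for part (a) and the case analysis for part (b) match the paper's approach closely, and your use of Theorem \ref{main theorem} to distinguish the finite-trace and infinite-trace versions in (a) is exactly what the paper does. However, there is a genuine gap in your plan for the mutual non-isomorphism claims in (b).

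The tools you list --- the Hasse diagram of the Introduction, Theorem \ref{main theorem}, and Proposition \ref{LpqintoCpq} --- govern only the \emph{commutative} $L_{p,q}$ spaces and the embedding $L_{p,q}(0,\infty)\hookrightarrow C_{p,q}$. They do not suffice to separate the noncommutative members of the list. Specifically: to show that $S_{p,q}$ (and hence $C_{p,q}$ and $L_{p,q}(B(\cH)\otimes L_\infty(0,1))$) is not isomorphic to any subspace of $L_{p,q}(0,\infty)$, the paper invokes Gillespie's theorem on bounded Boolean algebras of projections (equivalently, one can use Lewis's local unconditional structure obstruction); and to show $C_{p,q}\not\approx S_{p,q}$, the paper uses the result of \cite{HSS} that $C_{p,q}\not\hookrightarrow S_{p,q}$. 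Neither of these follows from Theorem \ref{main theorem} or Proposition \ref{LpqintoCpq}, so your final paragraph's claim that those two results ``furnish the needed obstructions'' for $S_{p,q}\approx C_{p,q}$ is incorrect. A secondary point: when a $B(\cH)$ summand is present together with other summands, the identification with $C_{p,q}$ (or with $L_{p,q}(B(\cH)\otimes L_\infty(0,1))$) is not automatic --- the paper uses Pe{\l}czy\'nski's decomposition technique to absorb the remaining pieces, and this step should be made explicit.
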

\begin{proof}
The underlying von Neumann algebra is $*$-isomorphic to one of the following:
\begin{enumerate}
\item  $\cM=\oplus _{1\le n\le N}(\mathbb{M}_n\otimes \cA_n)$, $N<\infty $, where $\cA_n$ is a finite-dimensional atomic commutative  von Neumann algebra with all atoms in $\cM$ having the same trace.  In this case, $L_{p,q}(\cM,\tau)\approx \ell_{p,q}^{m}$, where $m=\sum_{n=1}^N n^2 |\cA_n |$.
\item an infinite dimensional algebra $\cM=\oplus _{1\le n\le N}(\mathbb{M}_n\otimes \cA_n)$, $N<\infty $, where $\cA_n$ is an atomic commutative  von Neumann algebra with all atoms in $\cM$ having the same trace. In this case, $L_{p,q}(\cM,\tau)\approx \ell_{p,q}$.
\item $\cM=\oplus_{1\le n<\infty}(\mathbb{M}_n\otimes \cA_n)$, where 
$\cA_n$ is an atomic commutative  von Neumann algebra with all atoms in $\cM$ having the same trace,  and for any $N>0$, there exists an $n>N$ such that $\cA_n\ne \{0\}$. In this case, $L_{p,q}(\cM,\tau)\approx S_{p,q}$.
\item $\cM=\oplus_{1\le n<  \infty}(\mathbb{M}_n\otimes \cA_n) \bigoplus  (B(\cH)\otimes \cA_\infty )$, where   $\cA_n$ is an atomic commutative  von Neumann algebra,  $\cA_\infty\ne \{0\}$  and atoms in $\cM$ having the same trace. 
 We claim that $$L_{p,q}( (B(\cH)\otimes \cA_\infty ))\approx C_{p,q} .$$ Indeed, 
    we have $L_{p,q}( (B(\cH)\otimes \ell_\infty ))\hookrightarrow_c C_{p,q}$, i.e., there exists a Banach space $C $ such that $C_{p,q}\approx L_{p,q}( (B(\cH)\otimes\ell_\infty ))\oplus  C $.
Therefore, 
\begin{align*}
C_{p,q } & \approx L_{p,q}( (B(\cH)\otimes \ell_\infty ))\oplus C  \\
&\approx L_{p,q}( (B(\cH)\otimes \ell_\infty ))\oplus L_{p,q}( (B(\cH)\otimes \ell_\infty ))\oplus C  \\
&\approx L_{p,q}( (B(\cH)\otimes \ell_\infty ))\oplus C_{p,q}\\
&\approx L_{p,q}( (B(\cH)\otimes \ell_\infty )).
\end{align*}
Hence, if $|\cA_\infty|<\infty $, then  \begin{align*}
L_{p,q}( (B(\cH)\otimes \cA_\infty ))&\approx \oplus^{|\cA_\infty|}C_{p,q} \\
&\approx \oplus^{|\cA_\infty|} L_{p,q}( (B(\cH)\otimes \ell_\infty )) \\
&\approx L_{p,q}( (B(\cH)\otimes \ell_\infty ))\approx C_{p,q}.
\end{align*}
    In particular, we have $C_{p,q}\approx C_{p,q}\oplus C_{p,q}$. 
On the other hand, 
$L_{p,q}(\cM,\tau)\approx L_{p,q}( (B(\cH)\otimes \cA_\infty ))\oplus L_{p,q}(\oplus_{1\le n<\infty}\mathbb{M}_n\oplus \cA_n)$
is isomorphic to
\begin{enumerate}
  \item $L_{p,q}( (B(\cH)\otimes \cA_\infty ))\oplus \ell_{p,q}^m\approx C_{p,q }\oplus \ell_{p,q}^m$ for some natural number $m$ if $\oplus_{1\le n<\infty}\mathbb{M}_n\oplus \cA_n$ satisfies the conditions in case (1);
  \item $L_{p,q}( (B(\cH)\otimes \cA_\infty ))\oplus \ell_{p,q } \approx C_{p,q} \oplus \ell_{p,q } $ if $\oplus_{1\le n<\infty}\mathbb{M}_n\oplus \cA_n$ satisfies the conditions in case (2);
  \item $L_{p,q}( (B(\cH)\otimes \cA_\infty )) \oplus S_{p,q}\approx C_{p,q}\oplus S_{p,q} $ if $\oplus_{1\le n<\infty}\mathbb{M}_n\oplus \cA_n$ satisfies the conditions in case (3);
\end{enumerate}Note that 
$C_{p,q}\oplus \ell_2 \approx C_{p,q}$ and therefore, 
$C_{p,q}\oplus  \ell_{p,q}^m \approx C_{p,q}\oplus \ell_2 \oplus \ell_{p,q}^m \approx  C_{p,q}\oplus \ell_2 \approx  C_{p,q} $.
It is clear that if $X:= C_{p,q}\oplus \ell_{p,q  }$ or  $C_{p,q}\oplus S_{p,q}$, then $X\approx X\oplus X$.  
%By \cite[Lemma 2.5]{AHS}, we have  $C_{p,q}\approx  T_{p,q}:=\{x\in C_{p,q}: x_{ij}=0,i>j\}$. 
 %We have 
%\begin{align*}
%C_{p,q}\oplus \ell_{p,q}^n &\approx T_{p,q}\oplus T_{p,q}\oplus \ell_{p,q}\\
%& \approx 
% C_{p,q}\oplus C_{p,q}\oplus \ell_{p,q}\\
% &\approx T_{p,q}\oplus T_{p,q}\oplus \ell_{p,q}\oplus T_{p,q}\oplus T_{p,q}\oplus \ell_{p,q}\\
%  &\approx T_{p,q}\oplus T_{p,q}\oplus \ell_{p,q}\oplus \ell_{p,q}^n \oplus T_{p,q}\oplus T_{p,q}\oplus \ell_{p,q}\oplus \ell_{p,q}^n \\
%  & \approx C_{p,q}\oplus \ell_{p,q }^n \oplus C_{p,q}\oplus \ell_{p,q }^n .\end{align*}  
 Now, applying Pelczynski's decomposition technique\cite[Theorem 2.2.3]{AK}, we obtain that $L_{p,q}(\cM,\tau)\approx C_{p,q}$.
\item $\cM=\oplus _{1\le n<N}(\mathbb{M}_n\otimes \cA_n)$, where $\cA$ is atomless. If the trace on $\cM$ is finite, then $L_{p,q}(\cM,\tau)\approx L_{p,q}(0,1)$; if the trace is infinite, then  $L_{p,q}(\cM,\tau)\approx L_{p,q}(0,\infty)$;
\item  $\cM =\oplus_{1\le n<\infty }(\mathbb{M}_n \otimes A_n)$, where for any $N>0$, there exists $n>N$ such that  $A_n$ is a non-trivial  commutative atomless von Neumann algebra for any $n\ge 1$.
 Assume that $\tau=\oplus_{1\le n <\infty }{\rm Tr}_n\otimes \tau_n$, where ${\rm Tr}_n$ stands for the standard trace on $\mathbb{M}_n$ and $\tau_n$ is a tracial state. 
Then, we have $\ell_{p,q}\xhookrightarrow{c } L_{p,q}(\cM,\tau)$. 
This together with Theorem \ref{main theorem} shows that $$L_{p,q}(\cM,\tau)\not\hookrightarrow L_{p,q}(\cM,\tau'),$$
whenever $\tau'$ is a faithful normal tracial state on $\cM$. 
\item $\cM=\oplus_{1\le n<  \infty}(\mathbb{M}_n\otimes \cA_n) \bigoplus  (B(\cH)\otimes \cA_\infty )$,  where   $\cA_n$, $1\le n\le \infty $, is an atomless commutative  von Neumann algebra and  $\cA_\infty\ne \{0\}$.
Observe that 
$$L_{p,q}(\cM,\tau)\approx L_{p,q}(\cM,\tau)\oplus  L_{p,q}(\cM,\tau),$$
$$~L_{p,q}(B(\cH )\otimes L_\infty (0,1))\approx ~L_{p,q}(B(\cH )\otimes L_\infty (0,1)) \oplus ~L_{p,q}(B(\cH )\otimes L_\infty (0,1))$$
and $$L_{p,q}(\cM,\tau)\xhookrightarrow{c}L_{p,q}(B(\cH )\otimes L_\infty (0,1)),~L_{p,q}(B(\cH )\otimes L_\infty (0,1)) \xhookrightarrow{c} L_{p,q}(\cM,\tau).$$
By Pelczynski's decomposition technique\cite[Theorem 2.2.3]{AK}, we obtain that
$$L_{p,q}(B(\cH )\otimes L_\infty (0,1)) \approx L_{p,q}(\cM,\tau). $$
\end{enumerate}
 
Statement (a) is proved in case (6) above.
Below, we prove that the Banach spaces listed in statement (b) are mutually non-isomorphic. 
It is shown in \cite{KS,SS} that $\ell_{p,q}^n,~ 0\le n\le \infty,  ~L_{p,q}(0,1), ~L_{p,q}(0,\infty)$ are not isomorphic to each other. 

  Recall  a result due to Lewis\cite[Theorem]{Lewis} (see also \cite{GL74} or  \cite[Theorem 2.2]{KP70} due to Kwapien and Pelczynski):
if $C_E $ is a symmetric operator ideal in $B(\cH)$ with local unconditional structure, then the symmetric sequence space $\ell_E$ generating $C_E$ coincides with $\ell_2$.
%By  Dunford's theorem \cite[Theorem 7]{Dunford}, the Boolean algebra of projections generated by
%$\{L_{e_{ii}}\}$ and $\{R_{e_{ii}}\}$ is unbounded, where $L_{e_{ii}}$ and $R_{e_{ii}}$ denote the left and right multiplication on $S_E$ by $e_{ii}$.
However, by \cite[Theorem 2.6]{Gillespie}, any subspace of  $L_{p,q}(0,\infty )$ has the property that the Boolean algebra of projections generated by any pair of commuting bounded Boolean algebras of projections on  $L_{p,q}(0,\infty )$  is itself bounded.
This implies that $S_{p,q}$ (therefore, $C_{p,q}$ and $L_{p,q}(B(\cH)\otimes L_\infty(0,1))$) is not isomorphic to  a subspace of $L_{p,q}(0,\infty)$, $L_{p,q}(0,1)$ or $\ell_{p,q}$.

The main result in \cite{HSS} shows that $C_{p,q}\not\hookrightarrow S_{p,q}$ (therefore, $L_{p,q}(B(\cH)\otimes L_\infty (0,1))\not\hookrightarrow S_{p,q}$).
Therefore, $$\ell_{p,q}^n,~ 0\le n\le \infty,  ~L_{p,q}(0,1), ~L_{p,q}(0,\infty), ~ S_{p,q}, ~C_{p,q}$$ are mutually non-isomorphic.

By Proposition  \ref{LpqintoCpq}, we obtain that $$L_{p,q}(0,\infty)\not\hookrightarrow C_{p,q}.$$
Therefore, $L_{p,q}(B(\cH)\otimes L_\infty(0,1))\not\hookrightarrow  C_{p,q}$, which completes the proof for statement (b).
%It is shown in \cite{AHS} that $L_{p,q}(0,1)\not\hookrightarrow C_{p,q}$ (therefore, $L_{p,q}(B(\cH)\otimes L_\infty(0,1))\not\hookrightarrow  C_{p,q}$) for $(p,q)\in (1,2)\times [1,\infty ) \cup \{2\}\times [1,2)\cup \{2\}\times (2,\infty ) \cup (2,\infty)\times (2,p]$.
%Recall that 
%$$L_{p,q}^*=L_{p',q'},$$
%where $\frac1p+\frac{1}{p'}=1$ and  $\frac1q+\frac{1}{q'}=1$.
%We obtain that $$L_{p,q}(0,1), L_{p,q}(B(\cH)\otimes L_\infty(0,1))\not \approx    C_{p,q}$$ for all $(p,q)\in (1,\infty)\times (1,\infty )$, $(p,q)\ne(2,2)$. 
%It remains to show that $L_{p,1}(B(\cH)\otimes L_\infty(0,1)) \not \approx    C_{p,1}$ when $2<p<\infty $. It suffices to show that $U_{p,1}\not\hookrightarrow C_{p,1}$.
\end{proof}

Let $\cM$ be a von Neumann algebra equipped with a semifinite faithful normal tracial state $\tau$, which is not 
 of the form $\oplus_{1\le k <n}\mathbb{M}_k \otimes \cA_k$, $n<\infty$ ($\cA_k$ is a commutative algebra.
In the rest of this section, we complete proof for the tree claimed in the introduction. 
Note that, in the proof Corollary \ref{cor:app}, we have established  $S_{p,q}\not\hookrightarrow L_{p,q}(0,\infty)$. 
Therefore, we have 
$$  L_{p,q}(\cM,\tau)\not\hookrightarrow L_{p,q}(0,\infty ), L_{p,q}(0,1), L_{p,q}(0,1)\oplus \ell_{p,q}, \ell_{p,q}, L_{p,q}(0,1)\oplus U_{p,q}, U_{p,q}. $$
%\subsection{$ U_{p,q}\not  \hookrightarrow  L_{p,q}(\cM,\tau) \oplus \ell_{p,q} $ } 
To complete the proof for  the tree in the introduction, it suffices to prove that 
$ U_{p,q}\not  \hookrightarrow  L_{p,q}(\cM,\tau) \oplus \ell_{p,q} $ and $L_{p,q}(0,\infty)\not\hookrightarrow L_{p,q}(\cM,\tau)\oplus U_{p,q}$.

\begin{corollary}\label{cor5.3}
   Let $ (p,q)\in (1,\infty)\times [1,\infty )$ and let $ \cM $ be a von Neumann algebra equipped with a finite faithful normal trace $\tau$.
If $p\neq q,$ then $$ U_{p,q}\not  \hookrightarrow  L_{p,q}(\cM,\tau) \oplus \ell_{p,q}.  $$ 
\end{corollary}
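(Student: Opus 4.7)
The plan is to adapt the blueprint of Proposition \ref{LpqintoCpq}, which proves the analogous statement $U_{p,q}\not\hookrightarrow C_{p,q}$, by replacing the matrix-wise disjointification step (specific to the operator-ideal target $C_{p,q}$) with two parallel reductions tailored to the mixed target $L_{p,q}(\cM,\tau)\oplus\ell_{p,q}$: a Bessaga--Pe{\l}czy{\'n}ski gliding-hump disjointification on the commutative $\ell_{p,q}$ summand, and the noncommutative subsequence splitting theorem (Theorem \ref{subsequence splitting property}) on the $L_{p,q}(\cM,\tau)$ summand. Suppose, for a contradiction, that $T\colon U_{p,q}\hookrightarrow L_{p,q}(\cM,\tau)\oplus\ell_{p,q}$ is an isomorphic embedding with components $T=(T_1,T_2)$. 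Use the canonical realisation $U_{p,q}=L_{p,q}(\sqcup_n B_n)$ with $B_n=\{A_j^n\}_{j\ge 1}$ a countable collection of atoms of measure $1/n$ and normalised basis $e_j^n=n^{1/p}\chi_{A_j^n}$. Two built-in isometric copies of $\ell_{p,q}$ inside $U_{p,q}$ will be central: for each fixed $n$, the family $\{e_j^n\}_{j\ge 1}$ is, by a direct calculation of the Lorentz norm on atoms of equal measure, isometric to the $\ell_{p,q}$-basis; and the disjointly supported diagonal sums $x_k=\sum_{j=1}^k\chi_{A_j^k}$, each of norm $1$ with $\mu(x_k)=\chi_{[0,1]}$, are also isometric to the $\ell_{p,q}$-basis.

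After replacing $\cM$ by $\cM\overline{\otimes}L_\infty(0,1)$ to ensure Theorem \ref{subsequence splitting property} applies, observe that $\{e_j^n\}_j$ is weakly null in $U_{p,q}$ for each fixed $n$, so $\{T_2(e_j^n)\}_j$ is weakly null in $\ell_{p,q}$. A diagonal gliding-hump argument along the unconditional basis of $\ell_{p,q}$ extracts subsequences $\{j_i^{(n)}\}_i$ such that, up to summable norm perturbations, the family $\{T_2(e_{j_i^{(n)}}^n)\}_{n,i}$ is disjointly supported in $\ell_{p,q}$. In parallel, applying Theorem \ref{subsequence splitting property} to the weakly-null self-adjoint sequence $\{T_1(e_{j_i^{(n)}}^n)\}$ inside the KB-symmetric space $L_{p,q}(\cM\overline{\otimes}L_\infty(0,1))$ and passing to a further subsequence yields a decomposition $T_1(e_{j_i^{(n)}}^n)=u_{n,i}+v_{n,i}+w_{n,i}$ with $\{u_{n,i}\}$ equimeasurable, $\{v_{n,i}\}$ pairwise orthogonal, and $\{w_{n,i}\}$ norm-null.

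The next step is to eliminate the equimeasurable part $u_{n,i}$, which is achieved by exactly the Khinchine-type machinery developed in Section \ref{noncommutative section} for the proof of Theorem \ref{main theorem} in each of the regimes $p<2$, $p=2$ and $p>2$: the decisive point is that within each block the family $\{e_j^n\}_j$ is isometrically $\ell_{p,q}$-basic, so the vector-valued Khinchine inequalities (Theorems \ref{vector vs tensor}, \ref{lust-piquard}, \ref{vector lust-piquard}, together with Lemmas \ref{an bound noncomm}--\ref{bnx norm}) force $u_{n,i}=0$ up to negligible error. Once this reduction is in place, within each block $k$ the images $T(\chi_{A_{j_i^{(k)}}^k})$ for $i=1,\dots,k$ are simultaneously pairwise orthogonal on the $L_{p,q}(\cM,\tau)$-side and disjointly supported on the $\ell_{p,q}$-side. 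Consequently, the partial diagonal $y_k=\sum_{i=1}^k\chi_{A_{j_i^{(k)}}^k}\in U_{p,q}$ (disjointly supported, of norm $1$, and a basic sequence equivalent to $\{x_k\}$, hence isometrically $\ell_{p,q}$-basic) has image $T(y_k)$ whose singular value function tends pointwise to zero as $k\to\infty$ in both components, thanks to the norm estimate $\|T(\chi_{A_j^k})\|\le\|T\|k^{-1/p}$ combined with the joint disjoint/orthogonal structure. The noncommutative extension of Lemma \ref{2.1} then extracts from $\{T(y_k)/\|T(y_k)\|\}$ a subsequence equivalent to the unit vector basis of $\ell_q$, which, set against the $\ell_{p,q}$-equivalence inherited from $\{y_k\}$ in $U_{p,q}$, is impossible when $p\ne q$.

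The principal obstacle is the elimination of the equimeasurable part $u_{n,i}$: each of the regimes $p<2$, $p=2$ and $p>2$ requires a separate adaptation of the Khinchine-type estimates of Section \ref{noncommutative section} in the presence of the additional $\ell_{p,q}$-summand contribution carried by $T_2$. A secondary subtlety is the bookkeeping needed to run the gliding-hump disjointification on the $\ell_{p,q}$-component and the noncommutative subsequence splitting on the $L_{p,q}(\cM,\tau)$-component along the same diagonal $\{j_i^{(n)}\}$, so that the final sum over $i=1,\dots,k$ inside block $k$ simultaneously realises both types of disjointness required to invoke the noncommutative version of Lemma \ref{2.1}.
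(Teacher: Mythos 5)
Your overall strategy --- gliding-hump disjointification on the $\ell_{p,q}$ summand, subsequence splitting on the $L_{p,q}(\cM,\tau)$ summand, elimination of the equimeasurable part via the Khinchine machinery, then a diagonal-sum/$\ell_q$ contradiction --- diverges substantially from the paper's proof, and the two places where you defer the work are exactly where the argument breaks. First, the ``elimination of $u_{n,i}$'' is not a routine invocation of Section \ref{noncommutative section}: the estimates there (e.g.\ \eqref{vector-valued equivalence} and \eqref{uniform vector-valued equivalence}) are derived from an embedding of $\ell_{p,q}$ \emph{into} $L_{p,q}(\cM,\tau)$, whereas your $T_1=P_1T$ is only the compression of an embedding whose target carries the extra $\ell_{p,q}$ summand; $T_1$ need not be bounded below, so the lower bounds that drive the contradictions in each regime are not available for $T_1$ alone. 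Carrying this step out honestly would amount to proving a strengthening of Theorem \ref{main theorem} for the target $L_{p,q}(\cM,\tau)\oplus\ell_{p,q}$ --- essentially the corollary itself --- so as written it is circular/unproved. Second, your final step asserts that $\mu(T(y_k))\to0$ pointwise ``thanks to the norm estimate $\|T(\chi_{A_j^k})\|\le\|T\|k^{-1/p}$.'' This mimics Proposition \ref{LpqintoCpq}, but there the decisive inequality is $\|x\|_{B(\cH)}\le\|x\|_{C_{p,q}}$, valid because the standard trace on $B(\cH)$ has minimal projections of trace $1$. For a \emph{finite} trace the inequality runs the other way ($\|\cdot\|_{p,q}\lesssim\|\cdot\|_{\infty}$), so the pairwise orthogonal pieces $v_{k,i}$ can have unbounded operator norm while $\|v_{k,i}\|_{p,q}=O(k^{-1/p})$, and $\mu(t;\sum_i v_{k,i})$ need not tend to $0$ for small $t$. (Also, the paper has no ``noncommutative extension of Lemma \ref{2.1}''; one must first pass from orthogonal operators to disjoint functions, as in \cite[Proposition 3.3]{HSS}.)

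The paper's proof sidesteps both problems by never analysing the $P_1$-component. It works only with $P_2$: either the $\ell_{p,q}$-components are negligible along suitable subsequences, in which case $\{e^{\ell_{p,q}}_n\}\sim\{P_1T(\cdot)\}$ and $\ell_{p,q}\hookrightarrow L_{p,q}(\cM,\tau)$, contradicting Theorem \ref{main theorem}; or, after a gliding hump, the $\ell_{p,q}$-components of the normalised diagonal sums become disjointly supported and uniformly small in $\ell_\infty$ (because the normalising denominators grow like $\|\chi_{(0,1/n)}\|_{p,q}^{-1}$), hence equivalent to the $\ell_q$-basis by Lemma \ref{2.1}. The image of the diagonal basis then lies, up to equivalence, in $L_{p,q}(\cM,\tau)\oplus\ell_q\hookrightarrow L_{p,q}(\cM,\tau)\oplus L_{p,q}(0,1)\approx L_{p,q}(\cM\oplus L_\infty(0,1),\tau\oplus m)$, a noncommutative probability space after normalisation, and Theorem \ref{main theorem} is applied as a black box. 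If you want to repair your argument, adopt this absorption trick rather than attempting to redo the splitting and Khinchine analysis on the finite-trace summand.
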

\begin{proof}
 
It suffices to consider the case when $\cM$ is infinite-dimensional and atomless. 
Assume, by way of  contradiction, that there exists an isomorphism $T$ from $U_{p,q}$ into $L_{p,q}(\cM,\tau) \oplus \ell_{p,q}$. 
Without loss of generality, we may assume that 
$\{A_n\}=\cup_n B_n$ and 
$U_{p,q}=L_{p,q}\{ \{ A_n\}_{n\ge 1}\}$, where for each $n\ge 1$,     $B_n=\{A_k^n\}_{k=1}^\infty$ is    a set of infinite many atoms of measure $\frac1n$.

Let  $\left\{e_{j}^n\right\}_{n,j\ge 1}$ be the normalized basis spanned by   $\left\{\chi_{_{A_j^n}}\right\}_{n,j\ge 1}$, the set of    characteristic functions of $A_j^n$'s.
Let $P_1$ and $P_2$ be  projections  from $L_{p,q}(\cM,\tau)\oplus \ell_{p,q} $ onto $L_{p,q}(\cM,\tau)\oplus 0$ and $ 0\oplus \ell_{p,q}$, respectively.

We claim that for any $n\ge 1$ and  $1\le n_1 <n_2 < \cdots <n_n$, 
\begin{align}\label{P2TENJ}
\norm{P_2T \left(  \norm{\chi_{(0,\frac1n)}}_{L_{p,q}(0,1)}  \sum_{1\le j\le n}e_{n_j}^n  \right)}_{\ell_{p,q}}\not \to 0
\end{align}
as $n\to \infty$.
Indeed, otherwise, passing to a subsequence of $\{n\}_{n\ge 1}$ if necessary, we have
$$ \left\{ e_n^{\ell_{p,q}} \right \}_{n\ge  1}
\sim
\left\{ \norm{\chi_{(0,\frac1n)}}_{L_{p,q}(0,1)}  \sum_{1\le j \le n } e_{n_j}^n    \right\}_{n\ge  1}
\sim
\left \{  P_1T  \left(
 \norm{\chi_{(0,\frac1n)}  }_{L_{p,q}(0,1)} \sum_{1\le j \le n } e_{n_j}^n   \right)
  \right\}_{n\ge  1}.$$
  This implies that $$\ell_{p,q}
   \hookrightarrow  L_{p,q}(\cM,\tau ), $$
which yields  a contradiction to the Theorem \ref{main theorem},    $\ell_{p,q}\not \hookrightarrow L_{p,q}(\cM,\tau )$.
This proves \eqref{P2TENJ}.
Hence, passing to a subsequence if necessary, there exists a $\delta>0$ such that 
\begin{align}\label{P2TENJ2}
\norm{P_2T \left(   \sum_{1\le j\le n}e_{n_j}^n  \right)}_{\ell_{p,q}}\ge \frac{\delta}{ \norm{\chi_{(0,\frac1n)}}_{p,q}} 
\end{align}
for all $n\ge 1$. 

If, for infinitely many   $n $'s, $$ \liminf_{j\to \infty} \norm{ P_2T(e^n _j)}_{\ell_{p,q} }\to 0, $$
then, passing to a subsequence of $\{n\}$ if necessary, we may assume that 
for all  $n \ge 1 $, $$ \liminf_{j\to \infty} \norm{ P_2T(e^n _j)}_{\ell_{p,q} }\to 0. $$
For each $n\ge 1$, passing to a subsequence of $\{j\}$ if necessary, we may assume that 
$$ \norm{ P_2T(e^n _j)}_{\ell_{p,q} }\to 0$$
as $j\to \infty $. 
Denote by $K$ the basis constant of the basic sequence $\left\{T(e^{n}_j)\right\}_{n,j\ge 1}$. 
Let $e^1_{n_1}$ be such that $$\norm{P_1 T(e^1_{n_1})  -T(e^1_{n_1}) }_{L_{p,q}(\cM,\tau)\oplus \ell_{p,q}}\le \norm{   P_2T(e^n _j)}_{L_{p,q}(\cM,\tau)\oplus\ell_{p,q}} < \frac{\norm{T^{-1}}}{4 K}.
$$ 
Let $n_2>n_1$ be such that 
$$\norm{P_1 T(e^1_{n_2})  - T(e^1_{n_2}) }_{L_{p,q}(\cM,\tau) \oplus \ell_{p,q}} < \frac{\norm{T^{-1}}}{2^3 K}.$$ 
Let $n_3>n_2$ be such that 
$$\norm{P_1 T(e^2_{n_3})   - T(e^2_{n_3}) }_{L_{p,q}(\cM,\tau) \oplus \ell_{p,q}} < \frac{\norm{T^{-1}}}{2^4 K}.$$
Constructing inductively, we obtain a  sequence $\left\{e^1_{n_1},e^1_{n_2},e^2_{n_3},e^1_{n_4},e^2_{n_5},e^3_{n_6},\cdots \right\} $ with $n_1<n_2<\cdots$ 
such that 
$$\norm{P_1 T(e_{n_j}^m )  - T(e_{n_j}^m ) }_{L_{p,q}(\cM,\tau) \oplus \ell_{p,q}} < \frac{\norm{T^{-1}}}{2^{n+1} K}.$$
The choice of the sequence   $\{e_{n_j}^m  \}$ of $\{e_j^n\}$ is shown in the following matrix:
\begin{equation*}
\begin{matrix}
n=1&e_{n_1}^1 & \cdots &  e_{n_2}^1   &  \cdots  &  e_{n_4}^1  & \cdots&\cdots  & e_{n_7}^1 &\cdots&\cdots&  \cdots&   e_{n_{11}}^1&  \cdots&  \cdots\\
n=2&\cdots  & \cdots & \cdots  &  e_{n_3}^2   & \cdots &  e_{n_5}^2&\cdots &\cdots  & e_{n_8}^2&\cdots&  \cdots &  \cdots&  e_{n_{12}}^2&  \cdots\\
n=3&\cdots & \cdots & \cdots    & \cdots & \cdots &  \cdots &  e_{n_6}^3 & \cdots & \cdots   & e_{n_9}^3  &  \cdots&  \cdots&  \cdots
\\
n=4&\cdots & \cdots & \cdots    & \cdots & \cdots &  \cdots &  \cdots & \cdots & \cdots   &  \cdots &  e_{n_{10}}^4&  \cdots &  \cdots
\\
n=5&\cdots & \cdots & \cdots    & \cdots & \cdots &  \cdots &  \cdots & \cdots & \cdots   &  \cdots &   \cdots &  \cdots&  \cdots
\end{matrix}.
\end{equation*} 
Since $ \left\{e^1_{n_1},e^1_{n_2},e^2_{n_3},e^1_{n_4},e^2_{n_5},e^3_{n_6},\cdots \right\} $ is $1$-equivalent to   $\left\{e_j^n\right\}_{n,j\ge1}$ in $U_{p,q}$, we still denote 
 $\{e_{n_j}^n  \}$ by $\{e_j^n\}$
for simplicity.
Since  $ 2K \sum_{j=1}^\infty \frac{\frac{\norm{T^{-1}}}{2^{j+1}K}  }{\norm{T^{-1}}}=\frac12<1$, it follows from \cite[Theorem 1.3.9]{AK} that   the unconditional basic sequence $\left\{e_{j}^n\right\}_{n,j\ge 1}$ in $U_{p,q}$ is equivalent to 
$$   \left\{P_1T(e_{j}^n)\right\}_{n,j\ge 1}   $$
in $L_{p,q}(\cM,\tau)$. 
%The choice of the sequence   $\{e_{n_j}^i  \}$ of $\{e_j^n\}$ is shown in the following matrix:
%\begin{equation*}
%\left(
%\begin{matrix}
%e_{n_1}^1 & \cdots &  e_{n_2}^1   &  \cdots  &  e_{n_4}^1  & \cdots&\cdots  & e_{n_7}^1 &\cdots&\cdots&  \cdots&   e_{n_{11}}^1&  \cdots&  \cdots\\
%0  & \cdots & \cdots  &  e_{n_3}^2   & \cdots &  e_{n_5}^2&\cdots &\cdots  & e_{n_8}^2&\cdots&  \cdots &  \cdots&  e_{n_{12}}^2&  \cdots\\
%0 & \cdots & \cdots    & \cdots & \cdots &  \cdots &  e_{n_6}^3 & \cdots & \cdots   & e_{n_9}^3  &  \cdots&  \cdots&  \cdots
%\\
%0 & \cdots & \cdots    & \cdots & \cdots &  \cdots &  \cdots & \cdots & \cdots   &  \cdots &  e_{n_{10}}^4&  \cdots &  \cdots
%\\
%0 & \cdots & \cdots    & \cdots & \cdots &  \cdots &  \cdots & \cdots & \cdots   &  \cdots &   \cdots &  \cdots&  \cdots
%\end{matrix}\right).
%\end{equation*}
This implies that $U_{p,q} \hookrightarrow L_{p,q}(\cM,\tau)$, which is impossible due to Theorem \ref{main theorem}.

Passing to a subsequence if necessary, we may assume that
for each $n$,   there exists $\delta_n>0$ such that
$$ \liminf_{j\to \infty} \norm{ P_2T(e^n _j)}_{\ell_{p,q} }>  \delta_n. $$
Without loss of generality, we may assume that $\norm{ P_2T(e^n _j)}_{\ell_{p,q} }>  \delta_n$ for all $j\ge 1$. 
Let \begin{align}\label{fnjp2}
f^n_j := \frac{P_2T (e^n_j)}{\norm{P_2T (e^n_j) }_{\ell_{p,q}}}.
 \end{align}In particular, $\norm{f_j^n}_{\ell_{p,q}}=1$ and therefore, $\norm{f_{j}^n}_{\ell_\infty} \le 1 $.
For $f_{j_1^1}:=f^1_1$ and any $\varepsilon_1>0$, since $\ell_{p,q}$ is separable, there exists a finitely supported element $g_1^1:= f_{j_1^1}  s(g_1^1) $ (where $s(g_1^1)$ is the support of $g_1^1$) such that
$\norm{f_{j_1^1} -g_1^1 }_{\ell_{p,q} }\le \varepsilon_1.$
In particular, $\norm{g_{1}^1}_\infty \le 1 $.

Let $\varepsilon_2>0$.
Since $\{e^2_j\}_{j\ge 1}$ is weakly null,  it follows that    $P_2 T(e^2_j)\to_j 0$ weakly. 
Hence, $(P_2 T(e^2_j) ) s(g_1^1 ) \to_j 0$ in $\norm{\cdot}_{\ell_{p,q}}$.
By the separability of $\ell_{p,q}$,  
  there exist  $f_{j^2_1}\in \{T(e^2_j)\}_{j\ge 1}$ and  a finitely supported element $g_1^2:= f_{j_1^2}  s(g_1^2) $ (where $s(g_1^2)$ is the support of $g_1^2$) such that
 $\norm{f_{j^2_1} -g^2_1}_{\ell_{p,q}}\le \varepsilon_2 $
and $g_1^1 g_1^2=0$.

Let $\varepsilon_3>0$.
Since $\{e^2_j\}_{j\ge 1} $ is weakly null, it follows that there exist  $f_{j_2^2}\in \{T(e^2_j)\}_{j\ge 1}$ and   a finitely supported element $g_2^2:= f_{j_2^2}  s(g_2^2) $ (where $s(g_2^2)$ is the support of $g_2^2$) such that
 $\norm{f_{j_2^2} -g^2_2}_{\ell_{p,q}}\le \varepsilon_3 $
and $g_1^1 g_2^2=0=g_1^2g_2^2 $.

Since $\varepsilon_i$'s can be chosen to be arbitrarily small,  we may construct inductively a sequence $\{g^n_i\}_{1\le i\le n,n\ge 1 }$ of disjointly supported elements in $\ell_{p,q}$  which is  equivalent to $\{f_{j^n_i}\}_{1\le i\le n,n\ge 1}$\cite[Theorem 1.3.9]{AK}.
 To lighten the  notions, we may assume that   $\{f_{j^n_i}\}_{1\le i\le n,n\ge 1}=\{f_{ i}^n
=T(e_j^n)
\}_{1\le i\le n,n\ge 1}$. 
%The structure of the elements $\{g_{ j}^n  \} _{1\le j\le n,n\ge 1 } $ is shown in the following matrix:
%\begin{equation*}
%\left(
%\begin{matrix}
%g_{ 1}^1 &    \cdots   &  \cdots  &  \cdots & \cdots&\cdots  & \cdots &\cdots&\cdots&  \cdots&   \cdots&  \cdots&  \cdots&\cdots   \\
%\cdots  & \cdots   &  g_{ 1}^2   & \cdots &  g_{2}^2&\cdots &\cdots  & \cdots &\cdots&  \cdots &  \cdots&  \cdots &  \cdots& \cdots   \\
%\cdots & \cdots      & \cdots & \cdots &  \cdots &  \cdots &  g_{1}^3 & \cdots & g_{ 2}^3 & \cdots  & g_{ 3}^3 & \cdots  & \cdots  &\cdots
%\\
%\cdots & \cdots     & \cdots & \cdots &  \cdots &  \cdots & \cdots & \cdots &  \cdots  &  \cdots &\cdots  &\cdots  &  g_{1}^4   &\cdots
%\\
%\cdots  & \cdots     & \cdots & \cdots &  \cdots &  \cdots & \cdots & \cdots &  \cdots  &  \cdots &   \cdots & \cdots  & \cdots  &\cdots
%\end{matrix}\right).
%\end{equation*}
Since $g_{i}^n$ are disjointly supported and $\norm{g_{i}^n}_\infty \le 1$, it follows
that there exists a constant ${\rm Const} >0$ (independent of $n$) such that 
\begin{align*}
{\rm Const} \cdot  \norm{
\frac{
    \sum_{1\le i \le n}  \norm{P_2T  (e_{i}^n) }_{\ell_{p,q}} g_i^n  }
    {
    \norm{
   \sum_{1\le i \le n}  \norm{P_2T  (e_{i}^n) }_{
\ell_{p,q}} g_i^n }_{\ell_{p,q}}
    }
 }_{\ell_\infty}
&\le 
\norm{
\frac{
    \sum_{1\le i \le n}  \norm{P_2T  (e_{i}^n) }_{\ell_{p,q}} g_i^n  }
    {
    \norm{
   \sum_{1\le i \le n}  \norm{P_2T  (e_{i}^n) }_{\ell_{p,q}} f_{i}^n }_{\ell_{p,q}}
    }
 }_{\ell_\infty}\\
 &\stackrel{\eqref{fnjp2}}{=}
 \norm{
\frac{
    \sum_{1\le i \le n}  \norm{P_2T  (e_{ i}^n) }_{\ell_{p,q}} g_i^n  }
    {
    \norm{
   \sum_{1\le i \le n}   P_2T  (e_{ i}^n)  }_{\ell_{p,q}}
    }
 }_{\ell_\infty}\\
 &
\le 
\frac{\max _{1\le i \le n} \left\{   \norm{P_2T  (e_{i}^n) }_{\ell_{p,q}}  \right\} }
    {
    \norm{
   \sum_{1\le i \le n}   P_2T  (e_{i}^n)  }_{\ell_{p,q}}
    }
\\
 &
\stackrel{\eqref{P2TENJ2}}{ \lesssim }\norm{P_2T} \frac{ \norm{\chi_{(0,\frac1n)}}_{L_{p,q(0,1)}  }  }{ \delta  } \to  0
\end{align*}
as $n \to \infty$.

By Lemma \ref{2.1}, passing to a subsequence of $\{n\}$ if necessary, 
we may assume that   \begin{align}\label{gequ}\left\{\frac{
    \sum_{1\le i \le {n}}  \norm{P_2T  (e_{i }^{n}) }_{\ell_{p,q}} g_i^{n}  }
    {
    \norm{
   \sum_{1\le i \le {n}}  \norm{P_2T  (e_{i}^{n}) }_{\ell_{p,q}} g_i^{n} }_{\ell_{p,q}}
    } \right\}_{n\ge 1} \sim \left\{e_{k}^{\ell_q}\right \}_{n\ge 1}
 .\end{align}
Recall that $  e_{i}^{n } = \frac{1}{\norm{ \chi_{_{A_i^n}}}_{U_{p,q}}} \chi_{_{A_i^n}}  $. We have $ \norm{\chi_{(0,\frac1n)}}_{L_{p,q(0,1)}  } \sum_{1\le i \le {n }}   e_{i}^{n} = \chi_{_{\cup_{1\le i \le {n}}  A^{n }_i  }}  $. 
Since   $A_n^i$, $i,n\ge 1$, are disjointly supported and the measure of $\cup_{1\le i\le {n}}  A^{n }_i$ is $1$, it follows that 
\begin{align*}%\label{elpqembed}
  \left\{e_{n}^{\ell_{p,q}}\right\}_{n\ge 1} 
 \sim   \left\{ \norm{\chi_{(0,\frac{1}{n})}}_{L_{p,q(0,1)}  } \sum_{1\le i \le {n }}   e_{i}^{n }\right\}_{n\ge 1}
 \end{align*}
 in $U_{p,q}$, which is equivalent to 
\begin{align*}&\qquad   \left \{T\left (\norm{\chi_{(0,\frac1n)}}_{L_{p,q(0,1)}  } \sum_{1\le i \le {n }} e_{i}^{n }\right ) \right\}_{n\ge 1} \\
&~=\left\{  P_1T\left (\norm{\chi_{(0,\frac1n)}}_{L_{p,q(0,1)}  } \sum_{1\le i\le {n }} e_{i}^{n }\right ) \oplus P_2T\left (\norm{\chi_{(0,\frac1n)}}_{L_{p,q(0,1)}  } \sum_{1\le i \le {n }} e_{i}^{n }\right ) \right\}_{n\ge 1} \\
&\stackrel{\eqref{fnjp2} }{=}\left\{  P_1T\left (\norm{\chi_{(0,\frac1n)}}_{L_{p,q(0,1)}  } \sum_{1\le i \le {n }} e_{i}^{n }\right ) \oplus \norm{\chi_{(0,\frac1n)}}_{L_{p,q(0,1)}  }  \sum_{1\le i \le {n }}  \norm{P_2T  (e_{i}^{n}) }_{\ell_{p,q}}  f_{i}^{n }  \right\}_{n\ge 1}
\end{align*} in
 $L_{p,q}(0,1) \oplus \ell_{p,q}$.  
 By \eqref{P2TENJ}, 
we may assume that 
  $$\inf_{n\ge 1} \norm{\norm{\chi_{(0,\frac1n)}}_{L_{p,q}(0,1)}\sum_{1\le i  \le {n }}  \norm{P_2T  (e_{i}^{n}) }_{\ell_{p,q}}  f_{i}^{n }}_{\ell_{p,q}} >  0.$$ 
Also, since $\{g_{i}^n\}_{1\le i\le n, n\ge 1}\sim \{f_{i}^n\}_{1\le i\le n, n\ge 1}$, it follows  that 
$$  
\left\{e_{n}^{\ell_{p,q}}\right\}_{n\ge 1}
 \sim  \left\{  P_1T\left (\norm{\chi_{(0,\frac1n)}}_{L_{p,q(0,1)}  } \sum_{1\le i \le {n }} e_{i}^{n }\right ) \oplus \frac{\sum_{1\le i \le n }   \norm{P_2T  (e_{i}^{n}) }_{\ell_{p,q}}   g_i^{n }}{\norm{\sum_{1\le i \le {n }}  \norm{P_2T  (e_{i}^{n}) }_{\ell_{p,q}}   g_i^{n }}}_{\ell_{p,q}} \right\}_{n\ge 1} . 
 $$
 Hence, by \eqref{gequ},  for any $ \alpha =(\alpha_1,\alpha_2,\cdots,\alpha_n,\cdots)$, we have 
\begin{align*}&\quad \norm{\alpha }_{\ell_{p,q}}\\
&\sim 
 \left\| \left(  \alpha_n P_1T\left (\norm{\chi_{(0,\frac1n)}}_{L_{p,q(0,1)}  } \sum_{1\le i \le {n }} e_{i}^{n }\right ) 
\oplus 
 \alpha_n \frac{\sum_{1\le i \le n }   \norm{P_2T  (e_{i }^{n}) }_{\ell_{p,q}}   g_i^{n }}{\norm{\sum_{1\le i \le {n }}  \norm{P_2T  (e_{i }^{n}) }_{\ell_{p,q}}   g_i^{n }}}_{\ell_{p,q}} \right) _{n\ge 1}  \right\|_{L_{p,q}(\cM,\tau)\oplus \ell_{p,q}}\\
 &= \left\| \left(  \alpha_n P_1T\left (\norm{\chi_{(0,\frac1n)}}_{L_{p,q(0,1)}  } \sum_{1\le i \le {n }} e_{i}^{n }\right ) \right)_{n\ge 1}  \right\|_{L_{p,q}(\cM,\tau)}\\
 &\qquad +
 \left\| \left(   \alpha_n \frac{\sum_{1\le i \le n }   \norm{P_2T  (e_{i}^{n}) }_{\ell_{p,q}}   g_i^{n }}{\norm{\sum_{1\le i \le {n }}  \norm{P_2T  (e_{i}^{n}) }_{\ell_{p,q}}   g_i^{n }}}_{\ell_{p,q}} \right)_{n\ge 1}  \right\|_{ \ell_{p,q}}\\
  &\stackrel{\eqref{gequ}}{\sim} \left\| \left(  \alpha_n P_1T\left (\norm{\chi_{(0,\frac1n)}}_{L_{p,q(0,1)}  } \sum_{1\le i \le {n }} e_{i}^{n }\right ) \right)_{n\ge 1}  \right\|_{L_{p,q}(\cM,\tau)}  +
 \left\| \left(   \alpha_ n  \right)_{n\ge 1}  \right\|_{ \ell_{ q}}\\
 &= \left\| \left( \alpha_n \left(   P_1T\left (\norm{\chi_{(0,\frac1n)}}_{L_{p,q(0,1)}  } \sum_{1\le i \le {n }} e_{i}^{n }\right )   \oplus     e^{\ell_q}_n  \right)\right)_{n\ge 1} \right\|_{L_{p,q}(\cM,\tau)\oplus  \ell_{ q}}
\end{align*}
We obtain that   
 \begin{align*}
\ell_{p,q}\hookrightarrow L_{p,q}(\cM,\tau)\oplus \ell_{q}
\hookrightarrow L_{p,q}(\cM,\tau)\oplus L_{p,q}(0,1 ) \approx L_{p,q}(\cM\oplus L_\infty (0,1),\tau\oplus m  ).
\end{align*}
However, 
$\cM\oplus L_\infty (0,1)  $ is a finite von Neumann algebra and $\tau\oplus m $ is  a finite faithful normal trace  on $\cM\oplus L_\infty (0,1)$.
However, by Theorem \ref{main theorem}, we have $\ell_{p,q}\not\hookrightarrow L_{p,q}(\cM\oplus L_\infty (0,1) ,\tau\oplus m )$, which 
is a contradiction.
\end{proof}

%\subsection{$L_{p,q}(0,\infty)\not\hookrightarrow L_{p,q}(\cM,\tau)\oplus U_{p,q}$}

\begin{corollary}\label{infinityinto}   Let $ (p,q)\in (1,\infty)\times [1,\infty )$ and let $ \cM $ be a von Neumann algebra equipped with a finite faithful normal trace $\tau$.
If $p\neq q,$ then
$$L_{p,q}(0,\infty)\not\hookrightarrow L_{p,q}(\cM,\tau) \oplus U_{p,q}.$$
\end{corollary}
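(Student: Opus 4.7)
The plan is to argue by contradiction, closely following the strategy of the proof of Corollary \ref{cor5.3} but exploiting the extra structure of $L_{p,q}(0,\infty)$ beyond its complemented subspace $U_{p,q}$. Suppose $T\colon L_{p,q}(0,\infty)\hookrightarrow L_{p,q}(\cM,\tau)\oplus U_{p,q}$ is an isomorphic embedding. Replacing $(\cM,\tau)$ by $(\cM\overline{\otimes}L_\infty(0,1),\tau\otimes m)$, we may assume $\cM$ is atomless; let $P_1,P_2$ denote the coordinate projections.

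First, realize $U_{p,q}$ inside $L_{p,q}(0,\infty)$ via the standard atomic basis $\{e_j^n\}$ with $\mu(A_j^n)=1/n$, and apply the Corollary \ref{cor5.3} dichotomy to $T|_{U_{p,q}}$. This produces either a direct contradiction with Theorem \ref{main theorem} (when some subsequence of $P_2T(e_j^n)$ vanishes in norm, yielding $\ell_{p,q}\hookrightarrow L_{p,q}(\cM,\tau)$) or a disjointly supported block sequence $\{g_i^n\}$ in $U_{p,q}$ equivalent to the normalized $\{P_2T(e_{j_i^n}^n)\}$. The crucial step is the second case: in Corollary \ref{cor5.3} the block sequence was automatically equivalent to the $\ell_q$ unit basis via Lemma \ref{2.1}, using the inequality $\norm{\cdot}_\infty\le\norm{\cdot}_{\ell_{p,q}}$; but in our $U_{p,q}$-valued setting this inequality fails (normalized atomic vectors $e_j^n\in U_{p,q}$ have $\ell_\infty$-norm $n^{1/p}\to\infty$), so an additional argument is needed.

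To supply this, I will simultaneously restrict $T$ to the complemented subspace $L_{p,q}(0,1)\xhookrightarrow{c}L_{p,q}(0,\infty)$ and apply Theorem \ref{subsequence splitting property} in $L_{p,q}(\cM,\tau)$ together with an analogous splitting principle for $U_{p,q}$ (after tensoring the latter with $L_\infty(0,1)$ to make it atomless). A careful simultaneous subsequence extraction across the two factors then forces the block sequence $\{g_i^n\}\subset U_{p,q}$ to concentrate on atoms of vanishing measure, so that the hypothesis of Lemma \ref{2.1} is recovered and $\{g_i^n\}\sim\{e_k^{\ell_q}\}$. Combining with the $P_1$-image reproduces the closing calculation of Corollary \ref{cor5.3} and gives $\ell_{p,q}\hookrightarrow L_{p,q}(\cM,\tau)\oplus\ell_q\hookrightarrow L_{p,q}(\cM\oplus L_\infty(0,1),\tau\oplus m)$, contradicting Theorem \ref{main theorem}.

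The main obstacle is the concentration step just described. The trivial embedding $\mathrm{id}\colon U_{p,q}\hookrightarrow 0\oplus U_{p,q}$ shows that the bare Corollary \ref{cor5.3} argument cannot work when the target's second summand is $U_{p,q}$ instead of $\ell_{p,q}$; the fact that the domain is the strictly larger space $L_{p,q}(0,\infty)$, rather than just $U_{p,q}$, is essential, and the proof must genuinely exploit the $L_{p,q}(0,1)$-component to force the $\ell_\infty$-decay required by Lemma \ref{2.1}.
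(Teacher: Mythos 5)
Your diagnosis of the obstacle is exactly right: the identity map $U_{p,q}\to 0\oplus U_{p,q}$ shows that no contradiction can come from restricting $T$ to the atomic basis, and the normalized atoms $e_j^n$ have $\ell_\infty$-norm $n^{1/p}$, so Lemma \ref{2.1} is unavailable for their images in $U_{p,q}$. However, the step you propose to close this gap --- a ``simultaneous subsequence extraction'' combining the restriction of $T$ to $L_{p,q}(0,1)$ with splitting principles in both factors, asserted to ``force the block sequence to concentrate on atoms of vanishing measure'' --- is not an argument but a claim, and I do not believe it can be made to work in the form stated. The two test subspaces you use, $U_{p,q}$ and $L_{p,q}(0,1)$, together span only a copy of $L_{p,q}(0,1)\oplus U_{p,q}$, and that space genuinely does embed (complementably) into $L_{p,q}(\cM,\tau)\oplus U_{p,q}$ --- take $\cM=L_\infty(0,1)$, in which case the target literally contains $L_{p,q}(0,1)\oplus U_{p,q}$; in the paper's Hasse diagram this is the edge joining $L_{p,q}(0,1)\oplus U_{p,q}$ to $L_{p,q}(\cM,\tau)\oplus U_{p,q}$. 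Consequently, no argument that only inspects the behaviour of $T$ on these two subspaces, however carefully the subsequences are interlaced, can reach a contradiction: the witnesses must come from the part of $L_{p,q}(0,\infty)$ lying outside $L_{p,q}(0,1)\oplus U_{p,q}$, and your sketch never identifies such elements.

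The paper's proof locates these witnesses in the doubly indexed Rademacher system $r_{n,k}$ ($r_{n,k}(t)=r_n(t-k+1)$ for $t\in(k-1,k]$ and $0$ elsewhere): for each fixed $k$ one has infinitely many normalized equimeasurable functions supported on a single set of measure one. The sequences $\{P_2T(r_{n,k})\}_{n\ge1}$ are weakly null, hence equivalent to disjointly supported elements of $U_{p,q}$, so by \cite[Lemma 2]{KS} suitably normalized blocks $F_k$ over $n\in[a_k,b_k]$ satisfy $F_k^*\to0$ and span $\ell_q$ by Lemma \ref{2.1}; meanwhile the corresponding normalized blocks $G_k$ of Rademachers in the domain remain equimeasurable with functions comparable to $\chi_{(0,1)}$ and span $\ell_{p,q}$ by \cite[Theorem 6]{KS}. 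This flattening is produced by averaging infinitely many independent directions inside one unit interval --- not by any interaction with the $L_{p,q}(0,1)$-component --- and it is precisely the mechanism your proposal is missing. The remainder of the paper's argument then runs the same dichotomy and closing computation as in Corollary \ref{cor5.3}, ending with $\ell_{p,q}\hookrightarrow L_{p,q}(\cM,\tau)\oplus\ell_q\hookrightarrow L_{p,q}(\cM\oplus L_\infty(0,1),\tau\oplus m)$, contradicting Theorem \ref{main theorem}. To repair your proof you would need to replace the atomic basis by such a Rademacher-type family (equimeasurable, disjointly supported, non-flattening, with infinitely many independent copies per support set) rather than by $L_{p,q}(0,1)$.
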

\begin{proof} 
Assume, by way of  contradiction, that there exists an isomorphic embedding
 $$T: L_{p,q}(0,\infty)  \hookrightarrow L_{p,q}(\cM,\tau) \oplus U_{p,q}.$$
 
 For every $n,k\in \mathbb{N}$, $t\in (0,\infty)$, we define
$$
r_{n,k}(t):=
\begin{cases}r_n(t-k+1), &t\in (k-1,k];\\
0, &\mbox{elsewhere},
\end{cases}
$$
 where $r_n$, $n\in \mathbb{N}$, are the Rademacher functions\cite[p.24]{LT2}.
 By \cite[Theorem 5]{KS}, $\{r_{n,k}\}_{k,n=1}^\infty $ is a basic sequence, and hence,
$\{T(r_{n,k})\}_{k,n=1}^\infty $
is also a basic sequence with a  basic constant, which does not exceed $\left\|T\right\|$.
Let $P_1 $ and $P_2$ be projections from $L_{p,q}(\cM,\tau) \oplus U_{p,q}$
onto $L_{p,q}(\cM,\tau)\oplus 0$ and $0\oplus U_{p,q}$, respectively.

 Consider the sequence $\{P_2T(r_{n,k})\}_{n=1}^\infty$, $k\in \mathbb{N}$.
Without loss of generality, we may assume that
$$
\inf_n \norm{P_2T(r_{n,k})}_{U_{p,q}} =0,~\forall k\in \mathbb{N},$$
or else
$$
\inf_n \norm{P_2T(r_{n,k})}_{U_{p,q}}
>0,~\forall k\in \mathbb{N}.$$

If $
\inf_n \norm{P_2T(r_{n,k})}_{U_{p,q}} =0~\forall k\in \mathbb{N}$, 
it is immediate that there exists a sequence $\{n\}_{k=1}^\infty$ of integers such that
$$\{T(r_{n, k})\}_{k=1}^\infty \sim
\{P_1T(r_{n, k})\}_{k=1}^\infty.$$
Hence, 
$$\{e^{\ell_{p,q}}_k\}_{k=1}^\infty
\sim
\{r_{n, k}\}_{k=1}^\infty \sim
\{T(r_{n, k})\}_{k=1}^\infty \sim
\{P_1T(r_{n, k})\}_{k=1}^\infty. $$
In particular, $\ell_{p,q}\hookrightarrow L_{p,q}(\cM,\tau)$, 
which contradicts with Theorem \ref{main theorem}.

Now suppose that $
\inf_n \norm{P_2T(r_{n,k})}_{p,q}
>0,~\forall k\in \mathbb{N}.$
Without loss of generality, we may assume that
$$\inf_k \left(
\inf_n \norm{P_2T(r_{n,k})}_{U_{p,q}}\right)=0,$$
or else
$$\inf_k \left(
\inf_n \norm{P_2T(r_{n,k})}_{U_{p,q}}\right)>0.$$

If $\inf_k \left(
\inf_n \norm{P_2T(r_{n,k})}_{U_{p,q}} \right)=0,$
then it is immediate  that
 there exist two
sequences $\{n\}_{k=1}^\infty$ and $\{j_k\}_{k=1}^\infty $
 of integers such that
$$\{e^{\ell_{p,q}}_k\}_{k=1}^\infty
\sim
\{r_{n,j_k}\}_{k=1}^\infty \sim
\{T(r_{n,j_k})\}_{k=1}^\infty \sim
\{P_1T(r_{n,j_k})\}_{k=1}^\infty. $$
In particular, $\ell_{p,q}\hookrightarrow L_{p,q}(\cM,\tau)$,
which contradicts with  $\ell_{p,q}\not\hookrightarrow L_{p,q}(\cM,\tau) $ (see Theorem \ref{main theorem}).

Now, suppose that
$\inf_k \left(
\inf_n \norm{P_2T(r_{n,k})}_{U_{p,q}} \right)>0 $.
Since the sequence
$\{P_2T(r_{n,k})\}_{n=1}^\infty $ is weakly null\cite[Proposition 2.c.10]{LT2}, without loss of generality, by \cite[Proposition 1.a.12]{LT1}, we may
assume that
for any fixed $k\in \mathbb{N}$, the sequence $\{P_2T(r_{n,k})\}_{n=1}^\infty $
is equivalent to a sequence of disjointly supported
elements in $U_{p,q}$.
By \cite[Lemma 2]{KS}, there exist two sequences
$\{a_{k}\}_{k=1}^\infty $
and $\{b_{k}\}_{k=1}^\infty $,
 and a subsequence of $\{r_{n,k}\}_{n=1}^\infty$ (for simplicity, still denoted by $\{r_{n,k}\}_{n=1}^\infty$)
such that for
$$F_k:= \frac{\sum_{n=a_k}^{b_k}P_2T(r_{n,k}) }{\norm{\sum_{n=a_k}^{b_k}P_2T(r_{n,k})}_{U_{p,q}}} \in U_{p,q},~ k\ge 1, $$
the convergence $ F_k^*\to 0$ holds as $k\to \infty $.
By Lemma \ref{2.1}, passing to a subsequence if necessary,
we may assume that
\begin{align}\label{FKlq}
\{F_{k}\}_{k=1}^\infty \sim \{e^{\ell_q}_k\}_{k=1}^\infty . 
\end{align}
On the other hand, by
\cite[Theorem 6]{KS},
for the elements
$$G_k:= \frac{\sum_{n=a_k}^{b_k}r_{n,k}}{\norm{\sum_{n=a_k}^{b_k}r_{n,k} }_{L_{p,q}(0,\infty)}}\in [r_{n,k}]_{n=1}^\infty,$$
we have
\begin{align}\label{GKlpq}
\{G_{k}\}_{k=1}^\infty \sim\{e^{\ell_{p,q} }_k \}^\infty_{k=1}.
\end{align}
Denote $\alpha_k:=
\frac{
 \norm{
 \sum_{n=a_k}^{b_k}P_2T(r_{n,k})}_{U_{p,q}}
 }{ \norm{\sum_{n=a_k}^{b_k} r_{n,k} }_{L_{p,q}(0,\infty )}}$. In particular, $\sup_k \alpha_k \le \norm{P_2T}$
and 
\begin{align}\label{P2TGK}
P_2T(G_k) =\alpha_k  F_k, ~k\ge 1. 
\end{align}
Passing to a subsequence if necessary, we  may assume that $$\alpha_k\to 0\mbox{ as } k\to \infty $$ or else $$\inf_k\alpha_k >0 .$$
If $\alpha_k\to 0$ as $k\to \infty$, then $\norm{P_2T(G_k)}_{U_{p,q}}\to 0$ and hence, passing to a subsequence if necessary,
$$ \{P_1T(G_k)\}_{k=1}^\infty
\sim
\{T(G_k) \}_{k=1}^\infty\sim
\{  G_k  \}_{k=1}^\infty\stackrel{\eqref{GKlpq}}\sim \{e^{\ell_{p,q}}_k \}_{k=1}^\infty .$$
This implies that $\ell_{p,q}\hookrightarrow L_{p,q}(\cM,\tau)$, which is impossible (see Theorem \ref{main theorem}).

Now, if $\inf_{k}\alpha_k >0$, then, passing to a subsequence if necessary, we have
% (note that both $\{P_1T(G_k)\}_{k=1}^\infty $  and $\{P_2T(G_k)\}_{k=1}^\infty$ have basic subsequences\cite[Proposition 1.5.4]{AK})
$$ \{e^{\ell_q}_k\}_{k=1}^\infty \stackrel{\eqref{FKlq}}{\sim} \{F_k\}_{k=1}^\infty \stackrel{\eqref{P2TGK}}{\sim} \{P_2T (G_k) \}_{k=1}^\infty$$
 and 
 $$\{e_k^{\ell_{p,q}}\}_{k=1}^\infty
\stackrel{\eqref{GKlpq}}\sim
\{ G_k\}_{k=1}^\infty
 \sim
\{P_1T(G_k) \oplus P_2T(G_k)\}_{k=1}^\infty.$$
 For any $ \alpha =(\alpha_1,\alpha_2,\cdots,\alpha_n,0,0,\cdots)$, we have 
 \begin{align*}
   \norm{(\alpha_k)_{k\ge 1 }}_{\ell_{p,q}} & \sim \norm{(\alpha_k P_1T(G_k) \oplus \alpha_k P_2T(G_k))_{k\ge 1 }}_{L_{p,q}(\cM,\tau)\oplus U_{p,q}} \\
    & = \norm{(\alpha_k P_1T(G_k) )_{k\ge 1 }}_{L_{p,q}(\cM,\tau) }+\norm{( \alpha_k P_2T(G_k))_{k\ge 1 }}_{  U_{p,q}}\\
   & \sim \norm{(\alpha_k P_1T(G_k) )_{k\ge 1 }}_{L_{p,q}(\cM,\tau) }+\norm{( \alpha_k )_{k\ge 1 }}_{  \ell_{ q}}\\
    & =\norm{\left (\alpha_k \left ( P_1T(G_k) \oplus  e^{\ell_q}_k  \right) \right)_{k\ge 1 }}_{ L_{p,q}(\cM,\tau)\oplus  \ell_{ q}}
 \end{align*}
Therefore,   
\begin{align*}
\ell_{p,q}\hookrightarrow L_{p,q}(\cM,\tau)\oplus \ell_q 
\hookrightarrow L_{p,q}(\cM,\tau)\oplus L_{p,q}(0,1)\approx L_{p,q}(\cM\oplus [0,1],\tau\oplus m),
\end{align*}
which contradicts with Theorem \ref{main theorem}. 
 \end{proof}

\end{document}